\patchcmd{\subsection}{-.5em}{.5em}{}{}
  \def\textsubscript#1{\_#1}%
\newcommand\restr[2]{{\left.\kern-\nulldelimiterspace #1 \right|_{#2}}}
\newcommand\supind[1]{{\smash[t]{(#1)}}}
\let\originalleft\left
\let\originalright\right
\renewcommand{\left}{\mathopen{}\mathclose\bgroup\originalleft}
\renewcommand{\right}{\aftergroup\egroup\originalright}
\numberwithin{equation}{section}
\numberwithin{figure}{section}
\newtheorem{thm}{Theorem}[section]
\newtheorem{theorem}[thm]{Theorem}
\newtheorem{assumption}[thm]{Assumption}
\newtheorem{prop}[thm]{Proposition}
\newtheorem{lemma}[thm]{Lemma}
\newtheorem{cor}[thm]{Corollary}
\newtheorem{corollary}[thm]{Corollary}
\newtheorem{conj}[thm]{Conjecture}
\newtheorem*{conj*}{Conjecture}
\newtheorem{problem}[thm]{Problem}
\newtheorem*{question*}{Question}
\theoremstyle{definition}
\newtheorem{definition}[thm]{Definition}
\theoremstyle{definition}
\newtheorem{rmk}[thm]{Remark}
\newtheorem{example}[thm]{Example}
\newtheorem{notation}[thm]{Notation}
\newcommand{\slocus}{{\scrA}} 	%
\DeclareMathOperator{\hrk}{rank} 	%
\newcommand{\wdeg}{{\textrm{deg}_w}} 	%
\DeclareMathOperator{\gr}{gr}
\newcommand{\suchthat}{\;\ifnum\currentgrouptype=16 \middle\fi|\;}
\newcommand{\bigmid}{\left.\vphantom{\Big\{} \suchthat \vphantom{\Big\}}\right.}
\DeclareMathOperator{\ch}{Char}
\newcommand{\scrA}{{\mathscr A}}
\newcommand{\A}{{\mathbb A}}
\newcommand{\C}{{\mathbb C}}
\newcommand{\N}{{\mathbb N}}
\newcommand{\Q}{{\mathbb Q}}
\newcommand{\R}{{\mathbb R}}
\newcommand{\Z}{{\mathbb Z}}
\newcommand{\abs}[1]{{\left|#1\right|}}
\newcommand{\Ann}{\operatorname{Ann}}
\newcommand{\del}{\partial}
\newcommand{\diag}{\operatorname{diag}}
\newcommand{\di}{\partial}
\newcommand{\GL}{\operatorname{GL}}
\renewcommand{\iff}{\Leftrightarrow}
\newcommand{\isom}{{\ \cong\ }}
\newcommand{\ohne}{{\ \setminus \ }}
\newcommand{\red}{{\operatorname{red}}}
\newcommand{\reg}{{\operatorname{reg}}}
\newcommand{\Sing}{\operatorname{Sing}}
\newcommand{\Spec}{\operatorname{Spec}}
\renewcommand{\to}[1][]{\xrightarrow{\ #1\ }}
\newcommand*{\da@rightarrow}{\mathchar"0\hexnumber@\symAMSa 4B }
\newcommand*{\da@leftarrow}{\mathchar"0\hexnumber@\symAMSa 4C }
\newcommand*{\xdashrightarrow}[2][]{%
  \mathrel{%
    \mathpalette{\da@xarrow{#1}{#2}{}\da@rightarrow{\,}{}}{}%
  }%
}
\newcommand{\xdashleftarrow}[2][]{%
  \mathrel{%
    \mathpalette{\da@xarrow{#1}{#2}\da@leftarrow{}{}{\,}}{}%
  }%
}
\newcommand*{\da@xarrow}[7]{%
  \sbox0{$\ifx#7\scriptstyle\scriptscriptstyle\else\scriptstyle\fi#5#1#6\m@th$}%
  \sbox2{$\ifx#7\scriptstyle\scriptscriptstyle\else\scriptstyle\fi#5#2#6\m@th$}%
  \sbox4{$#7\dabar@\m@th$}%
  \dimen@=\wd0 %
  \ifdim\wd2 >\dimen@
    \dimen@=\wd2 %
  \fi
  \count@=2 %
  \def\da@bars{\dabar@\dabar@}%
  \@whiledim\count@\wd4<\dimen@\do{%
    \advance\count@\@ne
    \expandafter\def\expandafter\da@bars\expandafter{%
      \da@bars
      \dabar@ 
    }%
  }%
  \mathrel{#3}%
  \mathrel{%
    \mathop{\da@bars}\limits
    \ifx\\#1\\%
    \else
      _{\copy0}%
    \fi
    \ifx\\#2\\%
    \else
      ^{\copy2}%
    \fi
  }%
  \mathrel{#4}%
}
\newtheoremstyle{citing}%
  {}%
  {}%
  {\itshape}%
  {}%
  {\bfseries}%
  {\textbf{.}}%
  {.5em}%
  {\thmnote{#3}}%
\theoremstyle{definition}
\newtheorem{remark}[thm]{Remark}
{\theoremstyle{citing}
\newtheorem*{custom}{}}
\newsavebox\myboxA
\newsavebox\myboxB
\newlength\mylenA
\newcommand*\xtilde[2][0.8]{%
    \sbox{\myboxA}{$\m@th#2$}%
    \setbox\myboxB\null%
    \ht\myboxB=\ht\myboxA%
    \dp\myboxB=\dp\myboxA%
    \wd\myboxB=#1\wd\myboxA%
    \sbox\myboxB{$\m@th\widetilde{\copy\myboxB}$}%
    \setlength\mylenA{\the\wd\myboxA}%
    \addtolength\mylenA{-\the\wd\myboxB}%
    \ifdim\wd\myboxB<\wd\myboxA%
       \rlap{\hskip 0.5\mylenA\usebox\myboxB}{\usebox\myboxA}%
    \else
        \hskip -0.5\mylenA\rlap{\usebox\myboxA}{\hskip 0.5\mylenA\usebox\myboxB}%
    \fi}
\newbox\usefulbox
\def\getslant #1{\strip@pt\fontdimen1 #1}
\def\xxtilde #1{\mathchoice
 {{\setbox\usefulbox=\hbox{$\m@th\displaystyle #1$}%
    \dimen@ \getslant\the\textfont\symletters \ht\usefulbox
    \divide\dimen@ \tw@ 
    \kern\dimen@ 
    \xtilde{\kern-\dimen@ \box\usefulbox\kern\dimen@ }\kern-\dimen@ }}
 {{\setbox\usefulbox=\hbox{$\m@th\textstyle #1$}%
    \dimen@ \getslant\the\textfont\symletters \ht\usefulbox
    \divide\dimen@ \tw@ 
    \kern\dimen@ 
    \xtilde{\kern-\dimen@ \box\usefulbox\kern\dimen@ }\kern-\dimen@ }}
 {{\setbox\usefulbox=\hbox{$\m@th\scriptstyle #1$}%
    \dimen@ \getslant\the\scriptfont\symletters \ht\usefulbox
    \divide\dimen@ \tw@ 
    \kern\dimen@ 
    \xtilde{\kern-\dimen@ \box\usefulbox\kern\dimen@ }\kern-\dimen@ }}
 {{\setbox\usefulbox=\hbox{$\m@th\scriptscriptstyle #1$}%
    \dimen@ \getslant\the\scriptscriptfont\symletters \ht\usefulbox
    \divide\dimen@ \tw@ 
    \kern\dimen@ 
    \xtilde{\kern-\dimen@ \box\usefulbox\kern\dimen@ }\kern-\dimen@ }}%
 {}}
\newcommand*\xoverline[2][0.75]{%
    \sbox{\myboxA}{$\m@th#2$}%
    \setbox\myboxB\null%
    \ht\myboxB=\ht\myboxA%
    \dp\myboxB=\dp\myboxA%
    \wd\myboxB=#1\wd\myboxA%
    \sbox\myboxB{$\m@th\overline{\copy\myboxB}$}%
    \setlength\mylenA{\the\wd\myboxA}%
    \addtolength\mylenA{-\the\wd\myboxB}%
    \ifdim\wd\myboxB<\wd\myboxA%
       \rlap{\hskip 0.5\mylenA\usebox\myboxB}{\usebox\myboxA}%
    \else
        \hskip -0.5\mylenA\rlap{\usebox\myboxA}{\hskip 0.5\mylenA\usebox\myboxB}%
    \fi}
\def\xxoverline #1{\mathchoice
 {{\setbox\usefulbox=\hbox{$\m@th\displaystyle #1$}%
    \dimen@ \getslant\the\textfont\symletters \ht\usefulbox
    \divide\dimen@ \tw@ 
    \kern\dimen@ 
    \overline{\kern-\dimen@ \box\usefulbox\kern\dimen@ }\kern-\dimen@ }}
 {{\setbox\usefulbox=\hbox{$\m@th\textstyle #1$}%
    \dimen@ \getslant\the\textfont\symletters \ht\usefulbox
    \divide\dimen@ \tw@ 
    \kern\dimen@ 
    \xoverline{\kern-\dimen@ \box\usefulbox\kern\dimen@ }\kern-\dimen@ }}
 {{\setbox\usefulbox=\hbox{$\m@th\scriptstyle #1$}%
    \dimen@ \getslant\the\scriptfont\symletters \ht\usefulbox
    \divide\dimen@ \tw@ 
    \kern\dimen@ 
    \xoverline{\kern-\dimen@ \box\usefulbox\kern\dimen@ }\kern-\dimen@ }}
 {{\setbox\usefulbox=\hbox{$\m@th\scriptscriptstyle #1$}%
    \dimen@ \getslant\the\scriptscriptfont\symletters \ht\usefulbox
    \divide\dimen@ \tw@ 
    \kern\dimen@ 
    \xoverline{\kern-\dimen@ \box\usefulbox\kern\dimen@ }\kern-\dimen@ }}%
 {}}
\newcommand{\lex}{{\operatorname{lex}}}
\DeclareMathOperator{\Sol}{Sol}
\DeclareMathOperator{\LM}{LM}
\DeclareMathOperator{\init}{in}
\subjclass[2020]{34M15, 33C70 (primary), 34M35, 13P10, 14Q15 (secondary).}
\keywords{Algebraic Analysis, hypergeometric function, characteristic variety, singular locus, holonomic function}
\begin{document}
\title[Algebraic Analysis of ${_1F_{\!\!\;1}}$ of a Matrix Argument]{Algebraic Analysis of the Hypergeometric\\Function $\,{_1F_{\!\!\;1}}\,$ of a Matrix Argument}

\author[P. G\"{o}rlach]{Paul G\"{o}rlach}
\address{Paul G\"{o}rlach\\ Fakult\"at f\"ur Mathematik\\ Technische Universit\"at Chemnitz\\Reichen\-hainer Stra\-\ss e 39, 09126 Chemnitz, Germany}
\email{paul.goerlach@mathematik.tu-chemnitz.de}

\author[C. Lehn]{Christian Lehn}
\address{Christian Lehn\\ Fakult\"at f\"ur Mathematik\\ Technische Universit\"at Chemnitz\\Reichenhainer Stra\ss e 39, 09126 Chemnitz, Germany}
\email{christian.lehn@mathematik.tu-chemnitz.de}

\author[A.-L. Sattelberger]{Anna-Laura Sattelberger}
\address{Anna-Laura Sattelberger\\Max-Planck-Institut f\"{u}r Mathematik in den Naturwissenschaften\\Inselstra{\ss}e 22, 04103 Leipzig, Germany}
\email{anna-laura.sattelberger@mis.mpg.de}

\begin{abstract}
In this article, we investigate Muirhead's classical system of differential operators for the hypergeometric function $\,{_1F_{\!\!\;1}}\,$ of a matrix argument. We formulate a conjecture for the combinatorial structure of the characteristic variety of its Weyl closure which is both supported by computational evidence as well as theoretical considerations. In particular, we determine the singular locus of this system.
\end{abstract}
\setcounter{tocdepth}{1}
\maketitle
\tableofcontents

\section{Introduction}
Hypergeometric functions are probably the most famous special functions in mathematics and their study dates back to Euler, Pfaff, and Gau{\ss}, earlier contributions to the development of the theory are due to Wallis, Newton, and Stirling, we refer to~\cite{Dut84}. 
Around the origin, they have the series expansion
\begin{equation}\label{eq hypergeometric introduction}
\,{}_{p}F_{\!q}(a_{1},\ldots ,a_{p};c_{1},\ldots ,c_{q})(x)\, \coloneqq\, \sum_{n=0}^{\infty }\;{\frac {(a_{1})_{n}\cdots (a_{p})_{n}}{(c_{1})_{n}\cdots (c_{q})_{n}}}\ {\frac {x^{n}}{n!}},
\end{equation}
where $p,\, q$ are non-negative integers with $\,q+1\geq p\,$ and \mbox{$\,(a)_n=a\cdots (a+n-1)\,$} denotes the Pochhammer symbol. Hypergeometric functions are ubiquitous in mathematics and physics: they are intimately related to the theory of differential equations and show up at prominent places in physics such as the hydrogen atom. In recent years, there has been renewed interest in the subject coming from the connection with toric geometry established in~\cite{GKZ90,GZK89} and the interplay with mirror symmetry, see also the article~\cite{RSSW20} in this volume for more details and further references.

A natural generalization are hypergeometric functions of a matrix argument $X$ as introduced by Herz in~\cite[Section 2]{Her55} using the Laplace transform. Herz was building on work of Bochner~\cite{Boc52}. Ever since, they have been a recurrent topic in the theory of special functions. In~\cite[Section 5]{Con63}, Constantine expressed these functions as a series of zonal polynomials, thereby establishing a link with the representation theory of $\GL_n$. This series expansion bears a striking likeness to~\eqref{eq hypergeometric introduction} and is usually written as
\begin{equation}\label{eq definition hypergeometric function matrix introduction}
{_pF_{\!q}}(a_1,\ldots,a_p;c_1,\ldots,c_q)(X) \, \coloneqq \,  \sum_{n=0}^{\infty} \sum_{\lambda \, \vdash n} \frac{(a_1)_{\lambda}\cdots (a_p)_{\lambda}}{(c_1)_{\lambda}\cdots(c_q)_{\lambda}} \frac{C_{\lambda}(X)}{n!},
\end{equation}
where the $\,\lambda\,$ are partitions of $\,n\,$ and the $(a_i)_\lambda$, $(c_j)_\lambda$ are certain generalized Pochhammer symbols, see Definition~\ref{definition hypergeometric function}. 

In this article, we examine the differential equations the hypergeometric function $\,{_1F_{\!\!\;1}}(a;c)\,$ of a matrix argument $\,X\,$ satisfies from the point of view of algebraic analysis. If $\,X\,$ is an $(m\times m)$-matrix, the function~\eqref{eq definition hypergeometric function matrix introduction} only depends on the eigenvalues $\,x_1,\ldots,x_m\,$ counted with multiplicities. So we may equally well assume that $\,X=\diag(x_1,\ldots,x_m)\,$ is a diagonal matrix. In~\cite{Mui70}, Muirhead showed that the linear partial differential operators 
\begin{equation}\label{eq definition diff ops}
g_k \, \coloneqq \, x_k\partial_k^2 \,+\, (c-x_k)\partial_k \,+\,\frac{1}{2} \left( \sum_{\ell\neq k} \frac{x_\ell}{x_k-x_\ell}(\partial_k - \partial_\ell)\right) \,-\,a,
\end{equation}
$k=1,\ldots,m$, annihilate $\,{_1F_{\!\!\;1}}(a;c)\,$ wherever they are defined. We denote by $\,P_k\,$ the differential operator obtained from $\,g_k\,$ by clearing denominators and consider the left ideal $\,I_m\coloneqq( P_1,\ldots,P_m ) \,$ in the Weyl algebra $D_m$, see Section~\ref{section annihilators}. We refer to $\,I_m\,$ as the \emph{Muirhead ideal} or the \emph{Muirhead system} of differential equations and denote by $W(I_m)\,$ its Weyl closure. Our main result is: 
\begin{custom}[Theorem~\ref{thm:singularLocus}]
  The singular locus of $\,I_m\,$ agrees with the singular locus of $W(I_m)$. It is the hyperplane arrangement
  \begin{equation}\label{eq definition diagonal and coordinate hyperplanes}
  \, \slocus \, \coloneqq \, \left\{ x \in \mathbb{C}^m \ \middle| \ \prod_{k=1}^mx_k \prod_{\ell\neq k} (x_k - x_\ell) = 0  \right\}.
  \end{equation}
\end{custom}
 This leads to a lower bound for the characteristic variety of $I_m$, by which we essentially mean the characteristic variety of the $D_m$-module $D_m/I_m$. We would like to point out that the terminology used in this article is a slight modification and refinement of the usual definition in the theory of $D$-modules, taking  scheme-theoretic structures into account. For details, see Definition~\ref{definition holonomic} and the remarks thereafter.

\begin{custom}[Corollary~\ref{corollary lower bound char im}]
  The characteristic variety of $\,W(I_m)\,$ contains the zero section and the conormal bundles of the irreducible components of $\slocus$, i.e.,
 \begin{equation}\label{eq lower bound}
\begin{aligned}
\ch(W(I_m)) \,\supseteq\, &V\left(\xi_1, \ldots, \xi_m\right) \, \cup \, \bigcup_i V(x_i, \xi_1, \ldots, \widehat{\xi_i}, \ldots, \xi_m) \\ &\ \ \cup \bigcup_{i \neq j} V(x_i - x_j, \,\xi_i + \xi_j,\, \xi_1, \ldots, \widehat{\xi_i}, \ldots, \widehat{\xi_j}, \ldots, \xi_m).
\end{aligned}
\end{equation}
\end{custom}

Here, $\widehat{(\cdot)}$ means that the corresponding entry gets deleted. Note that the varieties on the right hand side of~\eqref{eq lower bound} are conormal varieties for the natural symplectic structure on $T^*\A^m$, see Section~\ref{section conormality}. More precisely, they are the conormal varieties to the irreducible components of the divisor $\,\slocus\,$ of singularities of the Muirhead system. To formulate our conjecture about the structure of the characteristic variety of $W(I_m)$, we introduce the following notation. Let $\,J_0|J_1\ldots J_k\,$ denote a partition of $[m]=\{1,\ldots,m\}$, such that only $J_0$ may possibly be empty. We denote by $Z_{J_0|J_1\ldots J_k}$ the linear subspace given by the vanishing of all $\,x_i\,$ for $\,i\in J_0\,$ and all $\,x_i-x_j\,$ for $\,i,j \in J_\ell$ and $\ell\in[k]$. For a smooth subvariety $Y\subseteq \A^m$, we denote by $\, N^*Y\subseteq T^*\A^m \,$ the conormal variety to $Y$. Then our conjecture can be phrased as follows:

\begin{custom}[Conjecture~\ref{conjecture charVar}]
Let $C_{J_0|J_1\ldots J_k} \coloneqq N^*Z_{J_0|J_1\ldots J_k}$. The (reduced) characteristic variety of $\, W( I_m )\, $ is the following arrangement of $m$-dimensional linear spaces:
\[
\ch(W(I_m))^{\text{\rm red}} \,= \, \bigcup_{[m] \,= \, J_0 \sqcup \dots \sqcup J_k} C_{J_0|J_1\ldots J_k}.
\]
In particular, it has $\,B_{m+1}\,$ many irreducible components, where $\,B_n\,$ denotes the $n$-th Bell number.
\end{custom}

By an explicit analysis of the differential operators in $ I_m$, we also obtain an upper bound for $\ch(I_m)$. For a partition $J_0|J_1 \dots J_k$, we define certain subspaces $\widehat C_{J_0|J_1 \dots J_k} \subseteq T^*\A^m\,$ such that $\,C_{J_0|J_1 \dots J_k} \subseteq \widehat C_{J_0|J_1 \dots J_k}\,$ with equality if and only if $\,\abs{J_\ell}\leq 2\,$ for all $\ell \geq 1$, see~\eqref{eq subspace chat} for the precise definition.

\begin{custom}[Proposition~\ref{proposition upper bound}]
The (reduced) characteristic variety of $\, I_m \,$ is contained in the arrangement of the linear spaces $\widehat C_{J_0|J_1 \dots J_k}$:
 \[
\ch(I_m)^{\text{\rm red}} \, \subseteq \, \bigcup_{[m] \,= \, J_0 \sqcup J_1 \sqcup \dots \sqcup J_k} \widehat C_{J_0|J_1\ldots J_k}.
\]
\end{custom}

It is the upper and lower bound together with explicit computations in the computer algebra system {\tt Singular} for small values of $m$, see Section~\ref{section small m}, that led us to formulate Conjecture~\ref{conjecture charVar}. We believe that it may contribute to a better understanding of the hypergeometric function ${_1F_{\!\!\;1}}$. As $\, I_m\,$ turns out to be non-holonomic in general, it seems that one should rather work with its Weyl closure $W(I_m)$, for which, in general, generators are not known. Clearly, one has $\ch(W(I_m))\subseteq \ch(I_m)$. Therefore, Proposition~\ref{proposition upper bound} in particular also gives an upper bound for $\ch(W(I_m))$.

\subsection*{Applications and related work.}
Hypergeometric functions of a matrix argument possess a rich structure and are highly fascinating objects. Not surprisingly, there is by now a long list of interesting applications in various areas such as number theory, numerical mathematics, random matrix theory, representation theory, statistics, and others; the following short list does not claim to be exhaustive.

The relation to representation theory and statistics is classical. For the link to representation theory, we refer to~\cite{BO93} and references therein. The connection with multivariate statistics was already present in~\cite{Her55} through the connection to the Wishart distribution, see~\cite[Section~8]{Her55}.

Unlike in the one-variable case, hypergeometric functions of a matrix argument have been studied from the point of view of holonomic systems only recently. The first instance we know of appeared in arithmetic~\cite{IKO12}. Motivated by the study of Siegel modular forms and the computation of special values of $L$-functions, the authors of~\cite{IKO12} study solutions of certain systems of differential equations. They are equivalent to Muirhead's system, see e.g.\ their Proposition~7.4 and Theorem~7.5. Holonomicity is shown explicitly in~\cite[Theorem 9.1]{IKO12}. Apart from number theory, hypergeometric functions of a matrix argument and holonomic systems also made an appearance in random matrix theory~\cite{DL15}.

A large impetus came from numerical analysis with the advent of the holonomic gradient descent and the holonomic gradient method developed in~\cite{NNNOSTT11}. These methods allowed to numerically evaluate and minimize several functions that are of importance in multivariate statistics. In~\cite{NNNOSTT11} and~\cite{KNNT14}, these methods are applied to the Fisher--Bingham distribution. In~\cite{HNTT13}, the holonomic gradient method is used to approximate the cumulative distribution function of the largest root of a Wishart matrix. Motivated by this method, several teams, mainly in Japan, have studied Muirhead's systems from the $D$-module point of view such as~\cite{HNTT13,HTT18,Noro,SSTOT13}.
This is the starting point for our contribution. We examine the $D$-module theoretic properties of Muirhead's ideal for the hypergeometric function $\,{_1F_{\!\!\;1}}\,$ of a matrix argument from a completely and consistently algebraic point of view.

\subsection*{Outline.}
This article is organized as follows. In Section~\ref{section tools}, we recall some basic facts about the Weyl algebra and $D_m$-ideals. We recall the notion of  holonomic functions and give a characterization that is well suited for testing holonomicity. In Section~\ref{section hypergeometric}, we discuss hypergeometric functions of a matrix argument. 
In Section~\ref{section annihilators}, we define the Muirhead ideal $\, I_m\,$ and collect what is known about holonomicity of $\, I_m\,$ and its Weyl closure. 
Section~\ref{section solutions muirhead} contains our main results. We investigate the Muirhead ideal of operators annihilating $\,{_1F_{\!\!\;1}}\,$ and determine its singular locus. This section also contains some results about holomorphic and formal solutions of the Muirhead system. 
The characteristic variety of this ideal and its Weyl closure is investigated in Section~\ref{section charvar}.
Conjecture~\ref{conjecture charVar} suggests that the characteristic variety of the Weyl closure can be described in a combinatorial way, using partitions of sets. We also discuss some basic computations in low dimensions.

For computations around the characteristic variety, we mainly used the  libraries {\tt dmod}~\cite{Singular-dmod}, {\tt dmodapp}~\cite{Singular-dmodapp},  and {\tt dmodloc}~\cite{Singular-dmodloc} in {\tt Singular}~\cite{Singular}. We also performed some Gr\"{o}bner basis computations in the rational Weyl algebra, where we used the {\tt Mathematica}~\cite{Mathematica} package {\tt HolonomicFunctions}~\cite{HolFun}.

\subsection*{Acknowledgments.}
We are thankful to Andr{\'a}s L{\H o}rincz, Christian Sevenheck, Bernd Sturmfels, and Nobuki Takayama for insightful discussions. We are grateful to the anonymous referee for valuable hints on literature and for proposing a strategy that led to an alternative proof of our main theorem using different techniques and enabled us to remove a technical condition on a parameter. We refer to the discussion in \cref{section solutions muirhead} and \cref{sec:appendix} for details.

P.G.\ acknowledges partial support by the DFG grant Se 1114/5-2. C.L.\ was supported by the DFG through the research grants Le 3093/2-2 and  Le 3093/3-1.

\section{The Weyl algebra}\label{section tools}
In this section, we recall basic facts about the Weyl algebra, the characteristic variety, and the definition of holonomic functions. We mainly follow the presentation and notation given in~\cite{SST00,SatStu}.

\subsection{Ideals and characteristic varieties}\label{section weyl}
We start by introducing some notation and terminology. Throughout this article, $\N$ denotes the natural numbers including $0$. For $m\in \N_{>0}$, we denote by 
$$D_m \,  \coloneqq \,  \mathbb{C}[x_1,\ldots,x_m]\langle \partial_1,\ldots,\partial_m \rangle$$
the {\em $m$-th Weyl algebra} and by 
$$R_m  \,  \coloneqq \,   \mathbb{C}\left( x_1,\ldots,x_m \right) \langle \partial_1,\ldots,\partial_m \rangle$$
the ring of differential operators with rational functions as coefficients. 
In this article, we refer to $\,R_m\,$ as {\em $m$-th rational Weyl algebra}. For a commutative ring $A$, we will abbreviate $\, A[x] =  A[x_1,\ldots,x_m]\,$ the polynomial ring and $\,A(x)=   A(x_1,\ldots,x_m)\, $ the field of rational functions. We will also use $\,\xi\,$ as a set of variables so that e.g. $\mathbb{C}(x)[\xi]  =\mathbb{C}(x_1,\ldots,x_m)[\xi_1,\ldots,\xi_m]$.

For a vector $\,w = (u,v) \in \R^{2m}\,$ with $\,u+v \geq 0\,$ component-wise, we define a partial order on the monomials $\,x^\alpha \di^\beta \in \C[x_1,\ldots,x_m] \langle \partial_1,\ldots,\partial_m \rangle\,$ for $\,\alpha,\beta \in \N^{m}\,$ by comparing the quantity 
$$\wdeg(x^\alpha \di^\beta ) \, \coloneqq \, \alpha\cdot u + \beta \cdot v \,=\, \sum_{i=1}^m \alpha_iu_i + \beta_iv_i,$$
where the indices refer to the coordinates of the vectors. We refer to $\,w\,$ as a \emph{weight vector} and to $\,\wdeg\,$ as the \emph{$w$-degree}. With the notation $\,e=(1,\ldots,1)\in \N^m\,$ and $\,w=(0,e)\,$ we recover the \emph{order} of a partial differential operator as the leading exponent for this $w$-degree. 

Given an operator $\,P\in D_m\,$ and a weight vector $w\in \R^{2m}$, we define its \emph{initial form} $\,\init_w(P)\,$ to be the sum of all terms of maximal $w$-degree. Note that one has to write $\,P\,$ in the basis $\,x^\alpha \di^\beta\,$ in order to compute the $w$-degree, i.e., one has to bring all differentials to the right.

The initial form $\,\init_w(P)\,$ can be viewed as the class of $\,P\,$ of the associated graded algebra $\,\gr_w(D_m)\,$ to the filtration of $\,D_m\,$ induced by $w$. The relation $\,\partial_i x_i - x_i \partial_i = 1$ in $\,D_m\,$ induces the relation
  \[\partial_i x_i - x_i \partial_i \,=\, 
  \begin{cases} 0 &\text{if } \,u_i + v_i > 0\\
                1 &\text{if } \,u_i + v_i = 0
  \end{cases}
  \qquad \qquad \text{in }\gr_{(u,v)}(D_m).
  \]
To highlight this commutator relation notationally, one writes $\,\xi_i\,$ instead of $\,\partial_i\,$ in $\,\gr_{(u,v)}(D_m)\,$ for all indices $\,i\,$ with $u_i + v_i = 0$. In particular,
  \[\gr_{(u,v)}(D_m) \,=\, \C[x][\xi] \ \text{ if } \,u+v > 0 \quad \text{ and }    \quad
    \gr_{(u,v)}(D_m) \,=\, D_m \ \text{ if } \,u+v = 0.\]

A {\em $D_m$-ideal} is a \textit{left} $D_m$-ideal. For a $D_m$-ideal $I$, the \emph{initial ideal} with respect to $\,w\,$ is the left ideal
\begin{equation}\label{eq initial ideal}
\init_{w}(I) \, \coloneqq \, \left( \, \init_w(P) \, \middle| \, P\in I\, \right) \,\subseteq\, \gr_w(D_m).
\end{equation}
A {\em $D_m$-module} is a \textit{left} $D_m$-module. 
$\text{Mod}(D_m)$ denotes the category of $D_m$-modules. Likewise for $R_m$-ideals and $R_m$-modules, respectively.
Next we recall the important notions of a characteristic variety and of holonomicity. 

\begin{definition}\label{definition holonomic}
The \emph{characteristic variety} of a $D_m$-ideal $\,I\,$ is the subscheme of~$\,\A^{2m}\,$
determined by the ideal $\,  \init_{(0,e)}(I)  \subseteq  \mathbb{C}[x_1,\ldots,x_m][\xi_1,\ldots,\xi_m]\,  $ 
and is denoted by $\ch(I)$. 
The $D_m$-ideal $\,I\,$ is called \textit{holonomic} if $\,\init_{(0,e)}(I)\,$ has dimension $m$.
\end{definition}
\pagebreak
\begin{remark}\label{remark charvar}\
\begin{enumerate}
	\item Note that $\left( 0 \right) $ and $\,D_m\,$ are not holonomic. 
Therefore, if $\,I\,$ is a holonomic ideal, it is a non-zero, proper $D_m$-ideal. 
\item Recall that as a consequence of an important theorem of Sato--Kawai--Kashiwara~\cite{SKK73}, we have $\dim Z\geq m$ for all irreducible components $\,Z\,$ of $\ch(I)$, see also the discussion in Section~\ref{section conormality}.
\item It is worthwhile to remark that the scheme structure of the characteristic variety 
is not uniquely determined by the $D_m$-module $D_m/I$.
Intrinsic invariants of $\,D_m/I\,$ are the set $\,\ch(I)^\red\,$ and the multiplicity of its irreducible components, 
see e.g.~\cite[Section~2.2]{HTT08}. The point is that---unlike in the commutative world---$I$ cannot be recovered as the annihilator of the $D_m$-module $\,D_m/I$, and so there can be $\,I\neq J \subseteq D_m\,$ with $\,D_m/I \isom D_m/J\,$.
\end{enumerate}
\end{remark}

\subsection{Conormality of the characteristic variety}\label{section conormality}
We remark that $\, \A^{2m} = \Spec\C[x_1,\ldots,x_m,\xi_1,\ldots,\xi_m]\,$ should actually be considered as the cotangent bundle $\,T^*\A^m\,$ where the $\,\xi_i\,$ are the coordinates in the fiber of the canonical morphism $\, T^*\A^m \to \A^m\,$ and the $\,x_i\,$ are the coordinates in the base. Being a cotangent bundle, $T^*\A^m$ carries a natural (algebraic) symplectic form $\,\sigma\,$ which can explicitly be described in coordinates as
\[
\sigma \,= \, dx_1 \wedge d\xi_1 + \cdots + dx_m \wedge d\xi_m.
\]
The symplectic structure gives rise to the notion of a \emph{Lagrangian subvariety}, that is, a subvariety $\,Z\subseteq T^*\A^m\,$ such that at every smooth point $z\in Z^\reg$, the tangent space $\,T_zZ \subseteq T_z(T^*\A^m) \,=\, T^*\A^m\,$ is isotropic (i.e., $\sigma$ vanishes identically on this subspace) and maximal with this property. Note that a Lagrangian subvariety automatically has dimension $m$. Examples for Lagrangian subvarieties in $\,T^*\A^m\,$ are conormal varieties. Given a subvariety $X \subseteq \A^m$, the associated {\em conormal variety} $\,N^*_X\,$ is defined as the Zariski closure of the conormal bundle $N_{X^\reg/\A^m}^* \subseteq T^*\A^m$. This is always a Lagrangian subvariety. We will make use of the following (special case of) important results due to Sato--Kawai--Kashiwara~\cite[Theorem 5.3.2]{SKK73}, see also Gabber's article~\cite[Theorem I]{Gab81} for an algebraic proof. 

\begin{theorem}\label{theorem conormal}
Let $\,I\,$ be a $D_m$-ideal. Then $\,\ch(I) \subseteq  T^*\A^m\,$ is coisotropic. If $\,I\,$ is holonomic, every irreducible component $\,Z\,$ of the characteristic variety $\, \ch(I) \,$ is a conormal variety. In particular, $Z$ is Lagrangian.
\end{theorem}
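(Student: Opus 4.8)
\emph{Proof proposal.} The statement is a special case of classical results, so the plan is not to reprove it from scratch but to deduce it from the involutivity theorem for characteristic varieties of coherent $\mathcal{D}$-modules: concretely I would cite \cite[Theorem 5.3.2]{SKK73} together with Gabber's purely algebraic version \cite[Theorem I]{Gab81}. For a self-contained account the deduction splits into three elementary steps, once that theorem is granted.

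\emph{Coisotropy.} Gabber's theorem asserts that the symbol ideal $\init_{(0,e)}(I)\subseteq\C[x_1,\dots,x_m,\xi_1,\dots,\xi_m]$ is closed under the Poisson bracket attached to $\sigma$ up to radical, i.e.\ $\{f,g\}\in\sqrt{\init_{(0,e)}(I)}$ for all $f,g\in\init_{(0,e)}(I)$. I would then translate this fiberwise: near a smooth point $z$ of $\ch(I)$ one may choose local defining functions $f$ of $\ch(I)$ lying in $\init_{(0,e)}(I)$, the symplectic orthogonal of $T_z\ch(I)$ is spanned by the Hamiltonian vectors $H_f(z)$, and the vanishing of the brackets $\{f,g\}$ on $\ch(I)$ says precisely that each $H_f$ is tangent to $\ch(I)$. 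Hence $T_z\ch(I)$ contains its own symplectic orthogonal, which is the definition of coisotropic; this step is routine symplectic linear algebra.

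\emph{From holonomic to conormal.} If $I$ is holonomic then $\init_{(0,e)}(I)$ has dimension $m$, and combined with the a priori bound $\dim Z\geq m$ for every irreducible component recalled in \cref{remark charvar}, this forces $\ch(I)$ to be equidimensional of dimension $m$. At a smooth point $z\in Z^{\reg}$, coisotropy gives that the symplectic orthogonal of $T_zZ$ is contained in $T_zZ$; since both spaces have dimension $m$ they coincide, so $T_zZ$ is Lagrangian and hence so is $Z$. To upgrade this to a conormal variety I would use that $\ch(I)$ is conic: the generators $\init_{(0,e)}(P)$ are principal symbols, polynomial in $x$ and homogeneous in $\xi$, so $\ch(I)$---and hence each component $Z$---is stable under the $\G_m$-action scaling the fibers of $\pi\colon T^*\A^m\to\A^m$. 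A conic Lagrangian subvariety of a cotangent bundle equals the conormal variety $N^*X$ of the closure $X$ of its image under $\pi$, a classical fact I would simply cite (e.g.\ the book of Kashiwara and Schapira); applied to each component $Z$ this finishes the proof.

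The only genuinely deep ingredient is the Sato--Kawai--Kashiwara / Gabber involutivity theorem, which I would treat as a black box; in a fully self-contained treatment, proving that theorem would be the real obstacle, and I would not attempt it here. Beyond that, the sole point requiring care is to run the whole argument at the level of the underlying reduced variety, since that is all the statement asserts, leaving aside the non-canonical scheme structure flagged in \cref{remark charvar}(3).
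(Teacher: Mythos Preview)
Your proposal is correct and follows essentially the same route as the paper: the paper does not give a self-contained proof either, but cites \cite[Theorem 5.3.2]{SKK73} and \cite[Theorem I]{Gab81} for the Lagrangian/coisotropic part, then observes that $\ch(I)$ is $\C^\ast$-stable by construction and invokes the ``conic Lagrangian $\Rightarrow$ conormal'' fact via \cite[Lemma~(3.2)]{Kas7475} (also \cite[Theorem~E.3.6]{HTT08}) rather than Kashiwara--Schapira. One small imprecision: the local defining functions of $\ch(I)$ at a smooth point lie in $\sqrt{\init_{(0,e)}(I)}$ rather than in $\init_{(0,e)}(I)$ itself, but since Gabber's theorem gives Poisson-closedness of the radical, your Hamiltonian tangency argument goes through unchanged.
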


To be more precise, the references above show that $\,Z\,$ is Lagrangian. By definition, the characteristic variety is stable under the $\C^\ast$-action given by  scalar multiplication in the fibers of $ T^*\A^m \to \A^m$, and therefore it is conormal by \cite[Lemma~(3.2)]{Kas7475}, see also~\cite[Theorem~E.3.6]{HTT08}.

\subsection{Holonomic functions}\label{section holonomic}
In this section, we recall the definition of a holonomic function and give a characterization of this notion which turns out to be very useful in practice.

\begin{definition}\label{definition holonomic function}
Let $\,M\,$ be a $D_m$-module and $f\in M$.  The {\em annihilator of $\,f\,$} is the $D_m$-ideal
$$\text{Ann}_{D_m}\left(f\right)\, \coloneqq \, \left\{ P\in D_m \mid P \bullet f =0 \right\}.$$ 
An element $f\in M\,$ is {\em holonomic} if its annihilator is a holonomic $D_m$-ideal.
\end{definition}

The definition generalizes in an obvious way to arbitrary subsets $N\subseteq M$.
If $\,M\,$ is a space of functions (e.g. holomorphic, multivalued holomorphic, smooth etc.) and $\,f\in M\,$ is holonomic, then we refer to $\,f\,$ as a {\em holonomic function}. The definition of a holonomic function first appeared in the article~\cite{Zei90} of Zeilberger.

\begin{definition}\label{definition weyl closure}
  The {\em Weyl closure}  of a $D_m$-ideal $\,I\,$ is the $D_m$-ideal 
  $$W(I)\,  \coloneqq \,  \left( R_mI\right)  \,  \cap  \,  D_m .$$ 
  We clearly have $ I \subseteq  W(I)$. A $D_m$-ideal  $\,I\,$ is {\em Weyl closed} if $\, I = W(I)\, $ holds.
\end{definition}

In general, it is a challenging task to compute the Weyl closure of a \mbox{$D_m$-ideal}, see~\cite{Tsa00} for the one-dimensional case and~\cite{Tsa02} in general.
The following property is in particular shared by spaces of functions.

\begin{definition}
  A $D_m$-module $\,M\,$ is {\em torsion-free} if it is torsion-free as module over $\mathbb{C}[x_1,\ldots,x_m]$.
\end{definition}

This class of $D_m$-modules allows to deduce further properties of annihilating $D_m$-ideals.
\begin{lemma}\label{lemma annihilator weyl closed}
Let $\,M\in \text{Mod}\left(D_m\right)\,$ be torsion-free and $N$ a subset of $M$. 
Then $\,\text{Ann}_{D_m}\left(N\right)\,$ is \textit{Weyl closed}.
\end{lemma}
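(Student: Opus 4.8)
The plan is to show directly that if $P \in W(\Ann_{D_m}(N))$, then $P$ annihilates every $f \in N$, so that $P \in \Ann_{D_m}(N)$; the reverse inclusion $\Ann_{D_m}(N) \subseteq W(\Ann_{D_m}(N))$ is automatic. Write $I = \Ann_{D_m}(N)$ for brevity. By definition of the Weyl closure, $P \in W(I)$ means $P \in R_m I$, i.e.\ there exist finitely many operators $Q_1, \dots, Q_r \in I$ and rational-function coefficients $a_1, \dots, a_r \in \C(x_1,\dots,x_m)$ such that $P = \sum_{j=1}^r a_j Q_j$ in $R_m$. Clearing denominators, let $p \in \C[x_1,\dots,x_m] \setminus \{0\}$ be a common denominator of the $a_j$, so that $p a_j \in D_m$ for all $j$ and hence $pP = \sum_{j=1}^r (p a_j) Q_j \in D_m I = I$ (using that $I$ is a left $D_m$-ideal).

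Now fix an arbitrary $f \in N$. Since $pP \in I = \Ann_{D_m}(N)$, we have $(pP) \bullet f = 0$ in $M$. On the other hand, $p \in \C[x_1,\dots,x_m]$ acts on $M$ and commutes with nothing in particular—but the key point is the order of multiplication: in $D_m$ we have $pP \bullet f = p \cdot (P \bullet f)$, where now $p$ acts as an element of $\C[x_1,\dots,x_m]$ on the module element $P \bullet f \in M$. Thus $p \cdot (P \bullet f) = 0$ in $M$. Since $p \neq 0$ and $M$ is torsion-free as a $\C[x_1,\dots,x_m]$-module, this forces $P \bullet f = 0$. As $f \in N$ was arbitrary, $P \in \Ann_{D_m}(N) = I$. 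Hence $W(I) \subseteq I$, and combined with $I \subseteq W(I)$ we conclude $I = W(I)$, i.e.\ $\Ann_{D_m}(N)$ is Weyl closed.

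There is essentially no serious obstacle here: the argument is a direct unwinding of the definitions, and the only substantive input is the torsion-freeness hypothesis, which is exactly what is needed to pass from $p \cdot (P \bullet f) = 0$ to $P \bullet f = 0$. The one point that requires a small amount of care is the bookkeeping in the identity $pP \bullet f = p \cdot (P\bullet f)$: one must be careful that $p$ here denotes a polynomial coefficient multiplying $P$ on the \emph{left} in $D_m$, so that its action on $M$ is precisely multiplication by the polynomial $p$ after applying $P$; this is immediate from the module axioms once one writes $pP$ as a genuine element of $D_m$ (which is legitimate precisely because we chose $p$ to clear all denominators).
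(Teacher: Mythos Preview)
Your proof is correct and follows essentially the same argument as the paper: write $P$ as an $R_m$-combination of annihilating operators, clear denominators by a nonzero polynomial $p$ so that $pP \in \Ann_{D_m}(N)$, and then use torsion-freeness to cancel $p$ from $p\cdot(P\bullet f)=0$. The only cosmetic difference is that the paper allows the coefficients $q_i$ to lie in $R_m$ rather than $\C(x_1,\dots,x_m)$, but this is immaterial since $I$ is a left $D_m$-ideal.
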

\begin{proof}
Write a given $\,P\in W( \text{Ann}_{D_m}(N))\,$ as $\,P=\sum_i q_iP_i\,$ where $\,q_i \in R_m\,$ and $P_i \in \text{Ann}_{D_m}(N)$. We choose $\,h\in \mathbb{C}[x_1,\ldots,x_m]\,$ such that $h P \in \text{Ann}_{D_m}(N)$. Then for every $\,f\in N\,$ we have
$\,hP\bullet f=0\,$ and therefore $P\bullet f=0$, since $\,M\,$ is torsion-free.
\end{proof}

\begin{definition}\label{definition holonomic rank}
 For a $D_m$-ideal $I$, 
its {\em singular locus} is the set
   \begin{align}
   \Sing(I) \,\coloneqq \, \bigcup_{Z \, \subseteq \, \ch(I)} \overline{\pi(Z)} \,\subseteq\, \A^m,
   \end{align}
 where $\,\pi\,$ denotes the projection $\,T^* \A^m \to \A^m\,$ and the union is over all irreducible components $\,Z\,$ of $\,\ch(I)$\, distinct from the zero section $\,\{\xi_1=\cdots=\xi_m=0\}\,$ as sets. Moreover,
 we denote by
 \begin{align}
 \hrk \left( I \right)  \,  \coloneqq \,   \dim_{\mathbb{C}(x)}\left( \mathbb{C}(x)[\xi]/\mathbb{C}(x)[\xi]\init_{(0,e)}(I) \right)  
  \,  = \,   \dim_{\mathbb{C}(x)} \left( R_m/ R_mI\right)
\end{align}	
the \em holonomic rank of $I$.
\end{definition}

The second equality is a standard fact, we refer to~\cite[Section 1.4]{SST00}. If $\,I\,$ is a holonomic $D_m$-ideal, $\hrk(I)$ gives the dimension of the space of holomorphic solutions to $\,I\,$ in a simply connected domain outside the singular locus of $\,I\,$ by the theorem of Cauchy--Kowalevski--Kashiwara Theorem~\cite[p.~44]{Kas83}, see also~\cite[Theorem 1.4.19]{SST00}. The following result clarifies the relationship between the holonomic rank and holonomicity.

\begin{lemma}[\cite{SST00}, Theorem~1.4.15]\label{lemma weyl closure holonomic}
	Let $\,I\,$ be a $D_m$-ideal. If $\,I\,$ has finite holonomic rank, then its Weyl closure $\,W(I)\,$ is a holonomic $D_m$-ideal.
\end{lemma}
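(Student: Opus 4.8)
The plan is to reduce to a Weyl closed ideal and then show that such an ideal of finite holonomic rank is automatically holonomic. First note that $I\subseteq W(I)\subseteq R_mI$ forces $R_m\cdot W(I)=R_m\cdot I$, so $\hrk(W(I))=\hrk(I)=:r$; since $\hrk$ and holonomicity of a Weyl closed ideal only depend on this $R_m$-data, it suffices to treat $J:=W(I)$. One may assume $r\geq 1$, since $r=0$ would force $J=D_m$. Because $J$ is Weyl closed, the kernel of $D_m/J\to R_m/R_mJ$ equals $(R_mJ\cap D_m)/J=0$, so $M:=D_m/J$ is torsion-free over $\C[x]$ and embeds into $\widehat M:=M\otimes_{\C[x]}\C(x)=R_m/R_mI$, which is a $\C(x)$-vector space of dimension $r$.

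Next I would show that $\ch(J)$ becomes the zero section after inverting a single polynomial. The characteristic variety $\ch(J)$ is conic (its defining ideal $\init_{(0,e)}(J)$ is $\xi$-homogeneous) and has finitely many irreducible components; let $h\in\C[x]$ be a product of nonzero equations for $\overline{\pi(Z)}$ over those components $Z$ that do \emph{not} dominate $\A^m$, and put $U:=\A^m\setminus V(h)$. Since forming $\init_{(0,e)}$ commutes with inverting the degree-zero element $h$, the characteristic variety of the $D_U$-module $M[h^{-1}]$ is $\ch(J)\cap\pi^{-1}(U)$; by construction this contains only components dominating $U$, each of which is conic and — because $\hrk(J)=r<\infty$ makes the generic fibre of $\ch(J)\to\A^m$ finite — generically finite over $U$, hence equal to the zero section of $T^*U$. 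Therefore $M[h^{-1}]$ is $\O_U$-coherent, i.e.\ a holonomic $D_U$-module.

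Finally, by Kashiwara's theorem that direct image along the open affine immersion $j\colon U\hookrightarrow\A^m$ preserves holonomicity, $M[h^{-1}]$ viewed as a $D_m$-module is holonomic over $D_m$; in particular it is $D_m$-coherent, so its characteristic variety (now as a $D_m$-module) has dimension $m$. As $M$ is torsion-free over $\C[x]$, the localization map $M\hookrightarrow M[h^{-1}]$ is an injection of $D_m$-modules, whence $\ch(M)\subseteq\ch(M[h^{-1}])$ and $\dim\ch(M)\leq m$; combined with Bernstein's inequality $\dim\ch(M)\geq m$ (a special case of \cref{theorem conormal}) this gives $\dim\ch(M)=m$, so $J=W(I)$ is holonomic. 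The step I expect to be the main obstacle is the second one — ruling out components of $\ch(J)$ of dimension $>m$ lying over proper subvarieties of $\A^m$ — and it is precisely here that Weyl-closedness (equivalently, $\C[x]$-torsion-freeness of $D_m/J$), the passage to the $\O_U$-coherent localization, and Kashiwara's preservation theorem are needed; the details of this bookkeeping are carried out in \cite[\S1.4]{SST00}.
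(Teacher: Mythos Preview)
The paper does not give its own proof of this lemma; it is stated as a quotation of \cite[Theorem~1.4.15]{SST00} and used as a black box. Your sketch is a correct outline of the standard argument and is essentially the one carried out in that reference: pass to the Weyl closure so that $D_m/J$ becomes $\C[x]$-torsion-free, invert a polynomial $h$ cutting out the non-dominant components of $\ch(J)$ to obtain an $\O_U$-coherent (hence holonomic) $D_U$-module, push forward along the open immersion $U\hookrightarrow\A^m$ using preservation of holonomicity, and conclude via the inclusion $M\hookrightarrow M[h^{-1}]$ that $\dim\ch(M)\leq m$. Your closing sentence already points to \cite[\S1.4]{SST00} for the bookkeeping, which is exactly where the paper sends the reader.
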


The following characterization of holonomicity is useful.

\begin{prop}\label{proposition equivalent definitions holonomic}
Let $\,M\,$ be a torsion-free $D_m$-module and $f\in M$. Then the following statements are equivalent.
\begin{enumerate}[label=(\arabic*)]
	\item\label{item one proposition equivalent definitions holonomic} $f$ is holonomic.
	\item\label{item two proposition equivalent definitions holonomic} For all $ k= 1,\ldots,m$, there exists a natural number $\,m(k)\in \N \,$ 
	and a non-zero differential operator 
	$P_k=\sum_{\ell=0}^{m(k)} a_\ell(x_1,\ldots,x_m)\partial_k^\ell \in  \Ann_{D_m}(f).$
	\item\label{item three proposition equivalent definitions holonomic} The annihilator of $\,f\,$ has finite holonomic rank.
\end{enumerate}
\end{prop}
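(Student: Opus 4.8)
Since the proposition characterizes holonomicity of $f$, I would prove it by closing a chain of implications: $(1)\Rightarrow(3)\Rightarrow(1)$ together with $(2)\Leftrightarrow(3)$. Write $I\coloneqq\Ann_{D_m}(f)$, and note we may assume $f\neq 0$, since for $f=0$ one has $I=D_m$ and the stated equivalence breaks down. For $(1)\Rightarrow(3)$: if $I$ is holonomic then $\ch(I)$ is nonempty of pure dimension $m$, and by \cref{theorem conormal} each of its irreducible components is a conormal variety $N^*_X$; a component other than the zero section has $X\subsetneq\A^m$, so its image under $\pi\colon T^*\A^m\to\A^m$ lies in the proper closed subset $X$. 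Hence over any point in the complement of the finite union of these subsets, the fibre of $\ch(I)\to\A^m$ consists of the single point $(x,0)$; a zero-dimensional generic fibre is precisely the statement $\hrk(I)=\dim_{\C(x)}(R_m/R_m I)<\infty$ (this is standard and may also be cited from \cite{SST00}). For the converse $(3)\Rightarrow(1)$ I would invoke the two results already established: $\hrk(I)<\infty$ makes $W(I)$ holonomic by \cref{lemma weyl closure holonomic}, while torsion-freeness of $M$ forces $I=W(I)$ by \cref{lemma annihilator weyl closed}; hence $I$ is holonomic, i.e.\ $f$ is holonomic.

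For $(3)\Rightarrow(2)$ I would argue by linear algebra over the field $\C(x)$. Put $r\coloneqq\hrk(I)<\infty$ and fix $k$; the $r+1$ classes of $1,\partial_k,\dots,\partial_k^r$ in the $r$-dimensional space $R_m/R_m I$ are $\C(x)$-linearly dependent, so $\sum_{\ell=0}^{r}c_\ell(x)\,\partial_k^\ell\in R_m I$ for some $c_\ell\in\C(x)$, not all zero. Writing this element as $\sum_i q_i P_i$ with $q_i\in R_m$, $P_i\in I$, and multiplying on the left by a common denominator $h\in\C[x]$ of the $c_\ell$ and of all coefficients occurring in the $q_i$, one gets $\sum_\ell (h c_\ell)(x)\,\partial_k^\ell=\sum_i(h q_i)P_i$, an element of $D_m$ that lies in the left ideal $I$, is nonzero, and has exactly the shape required in (2). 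For $(2)\Rightarrow(3)$ I would bound the holonomic rank directly: dividing $P_k$ in $R_m$ by its nonzero leading coefficient $a_{m(k)}(x)$ makes it monic of order $m(k)$ in $\partial_k$, and since $R_m I\supseteq\sum_k R_m P_k$, repeatedly using $P_k$ to eliminate $\partial_k^{m(k)}$ (together with the Leibniz rule to move coefficients to the left past the remaining $\partial_i$) exhibits $R_m/R_m I$ as spanned over $\C(x)$ by the finitely many monomials $\partial_1^{\beta_1}\cdots\partial_m^{\beta_m}$ with $\beta_k<m(k)$ for every $k$; hence $\hrk(I)\le\prod_k m(k)<\infty$.

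I do not expect a genuine obstacle: the only substantial input is the conormality of the components of $\ch(I)$ entering $(1)\Rightarrow(3)$, which is the already-cited theorem of Sato--Kawai--Kashiwara (\cref{theorem conormal}), and everything else is either a direct appeal to \cref{lemma weyl closure holonomic} and \cref{lemma annihilator weyl closed} or an elementary manipulation. The two places asking for a little care are keeping the denominator-clearing in $(3)\Rightarrow(2)$ inside the \emph{left} ideal $I$, and checking that the rewriting in $(2)\Rightarrow(3)$ terminates --- the latter being a routine induction on the multidegree in the $\partial_i$, since each reduction step strictly lowers the $\partial_k$-degree and raises none of the others.
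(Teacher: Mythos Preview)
Your proof is correct, and the implications $(3)\Rightarrow(1)$ and $(2)\Leftrightarrow(3)$ match the paper's (the paper simply declares $(2)\Leftrightarrow(3)$ ``obvious'' where you spell out the linear algebra). The genuine difference is in how you leave item~(1). The paper goes $(1)\Rightarrow(2)$ directly by citing Bernstein's \emph{elimination property} for holonomic ideals (as recorded in \cite[Lemma~4.1]{Zei90}): for each $k$ there exists a nonzero operator in $I$ involving only $x_1,\dots,x_m,\partial_k$. You instead go $(1)\Rightarrow(3)$ by invoking \cref{theorem conormal} to see that every component of $\ch(I)$ is a conormal variety, hence the projection $\ch(I)\to\A^m$ has zero-dimensional generic fibre. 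Both routes are valid; yours leans on the deeper Sato--Kawai--Kashiwara involutivity theorem, whereas the paper's citation is a more targeted (and lighter) statement tailored precisely to producing the operators in~(2). Your observation that the equivalence degenerates at $f=0$ is a fair remark; the paper tacitly works with $f\neq 0$.
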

\begin{proof}
By the elimination property for holonomic ideals in the Weyl algebra (cf. \cite[Lemma~4.1]{Zei90}, with a proof attributed to   Bernstein), \ref*{item one proposition equivalent definitions holonomic}$\Rightarrow$\ref*{item two proposition equivalent definitions holonomic} holds. The equivalence \ref*{item two proposition equivalent definitions holonomic}$\iff$\ref*{item three proposition equivalent definitions holonomic} is obvious.
Finally, \ref*{item three proposition equivalent definitions holonomic}$\Rightarrow$\ref*{item one proposition equivalent definitions holonomic} follows from combining Lemma~\ref{lemma annihilator weyl closed} with Lemma~\ref{lemma weyl closure holonomic}.
\end{proof}

Without the condition of torsion-freeness, there are counterexamples to the validity of \ref*{item three proposition equivalent definitions holonomic}$\Rightarrow$\ref*{item one proposition equivalent definitions holonomic}, see e.g.~\cite[Example 1.4.10]{SST00}.

\section{Hypergeometric functions of a matrix argument}\label{section hypergeometric}
In this section, we are going to introduce the hypergeometric functions of a matrix argument in the sense of Herz~\cite{Her55}, see Definition~\ref{definition hypergeometric function}. We will follow Constantine's approach~\cite{Con63} via zonal polynomials.

\subsection{Zonal polynomials}\label{section zonal polynomials}
Zonal polynomials are important in multivariate analysis with applications in multivariate statistics. Their theory has been developed by James in~\cite{Jam60,Jam61} and subsequent works, see the introduction of Chapter 12 of Farrell's monograph~\cite{Far76} for a more complete list. The definition given by James in~\cite{Jam61} relies on representation theoretic work of \'{E}.~Cartan~\cite{Car29} and James also credits Hua~\cite{Hua55,Hua59}, see~\cite{Hua79} for an English translation. As a general reference, the reader may consult the monographs of Farrell~\cite[Chapter 12]{Far76}, Takemura~\cite{Tak84}, and Muirhead~\cite{Mui82}. The presentation here follows~\cite[Chapter~7]{Mui82}.

Let $\,m\,$ be a fixed positive integer. Throughout, we only consider partitions of the form $\, \lambda\,= \,(\lambda_1,\ldots,\lambda_m)\,$ of an integer $\, d = \abs{\lambda}\coloneqq \lambda_1+\cdots+\lambda_m\,$ with $\, \lambda_1 \geq \lambda_2 \geq\cdots \geq\lambda_m \geq 0\,$ if not explicitly stated otherwise.
\begin{definition}\label{def:zonal}
For all partitions $ \, \lambda = (\lambda_1,\ldots,\lambda_m)\, $ of $d$, 
the {\em zonal polynomials} $\, C_\lambda \, \in\, \C[x_1,\ldots,x_m]\,$ are defined to be the unique symmetric 
homogeneous polynomials of degree $\,d\,$ satisfying the following three properties. 
\begin{enumerate}
	\item The leading monomial with respect to the lexicographic  order $\,\prec_\lex\,$ with $\,x_m \prec_\lex \dots \prec_\lex x_1\,$ is 
  \mbox{$\LM_{\prec_\lex}(C_\lambda) =x^\lambda = x_1^{\lambda_1}\cdots x_m^{\lambda_m}$.}
	\item The functions $\,C_\lambda\,$ are eigenfunctions of the operator 
	$$	\Delta \,= \, \sum_{i=1}^m x_i^2 \del_i^2 \,+\, \sum_{\substack{i,j = 1\\ i\neq j}}^m \frac{x_i^2}{x_i-x_j} \del_i ,$$
	 i.e, $\Delta \bullet C_\lambda =\alpha_\lambda \cdot C_\lambda$ for some $ \alpha_\lambda \in \C$.
	\item We have $$(x_1 + \cdots + x_m)^d  \,= \, \sum_{\abs{\lambda}=d} C_\lambda.$$
\end{enumerate}
\end{definition}

The uniqueness and existence of course have to be proven, we refer to~\cite[Section~7.2]{Mui82}, 
where also the eigenvalues $\,\alpha_\lambda\,$ are determined to be
\begin{align*}
\alpha_\lambda \,= \, \rho_\lambda \,+\, d\cdot(m-1) \quad \textrm{with }\,\, \rho_\lambda \,= \, \sum_{i=1}^m \lambda_i(\lambda_i - i).
\end{align*} 
Zonal polynomials can be explicitly calculated by a recursive formula for the coefficients 
in a basis of monomial symmetric functions. 
From this it follows that zonal polynomials have in fact rational coefficients. 
The space of symmetric polynomials has a basis given by symmetrizations of monomials. 
We can enumerate this basis by ordered partitions; the partition of a given basis element 
is its leading exponent in the lexicographic order. 
For a partition $\, \lambda = (\lambda_1,\ldots,\lambda_m)\,$ we put:
\begin{align*}
M_\lambda\, \coloneqq  \,x^\lambda + \textrm{ all permutations } \,= \, \sum_{\mu \in \mathfrak{S}_m.\lambda} x^\mu,
\end{align*}
where $\,\mathfrak{S}_m.\lambda\,$ denotes the orbit of the $m$-th symmetric group $\mathfrak{S}_m$.
We write the zonal polynomials with respect to this basis:
\begin{align*}
C_\lambda \,= \, \sum_{\mu \leq \lambda} c_{\lambda,\mu} M_\mu.
\end{align*}
Zonal polynomials can now be computed explicitly thanks to the following recursive formula:
\begin{align*}
c_{\lambda,\mu} \,= \, \mathlarger{\sum}_{\kappa} \ \frac{\kappa_i-\kappa_j}{\rho_\lambda - \rho_\mu}\ c_{\lambda,\kappa},
\end{align*}
where the sum runs over all (not necessarily ordered) partitions $\, {\kappa\,= \, (\kappa_1,\ldots,\kappa_m)}\,$ 
such that there exist $\,i < j\,$ with $\, \kappa_k = \mu_k\,$ for all $\,k\neq i,j\,$ and 
$ \kappa_i = \mu_i+t$, $\kappa_j = \mu_j-t$ for 
some $\,t \in \{1,\ldots,\mu_j\}\,$ and such that $\, \mu < \kappa \leq \lambda\,$ after reordering $\kappa$.

\subsection{Hypergeometric functions of a matrix argument}\label{section definition hypergeometric}
Let $\,X \in \C^{m\times m}\,$ be a square matrix and $\, \lambda = (\lambda_1,\ldots,\lambda_m)\,$ a partition. One defines the zonal polynomial $\,C_\lambda(X)\,$ as
\[
C_\lambda(X) \, \coloneqq  \, C_\lambda(x_1,\ldots,x_m),
\]
where $\,x_1,\ldots,x_m\,$ are the eigenvalues of $\,X\,$ counted with multiplicities. Note that $\,C_\lambda(X)\,$ is well-defined because $\,C_\lambda\,$ is a symmetric polynomial.

\begin{definition}\label{definition hypergeometric function}
The {\em hypergeometric function of a matrix argument} $\,X\,$ is given by
\begin{equation}\label{eq definition hypergeometric function matrix}
{_pF_{\! q}}(a_1,\ldots,a_p;c_1,\ldots,c_q)(X) \, \coloneqq \,  \sum_{k=0}^{\infty} \sum_{\lambda \, \vdash k} \frac{(a_1)_{\lambda}\cdots (a_p)_{\lambda}}{(c_1)_{\lambda}\cdots(c_q)_{\lambda}} \frac{C_{\lambda}(X)}{k!},
\end{equation}
where, for a partition $\lambda =(\lambda_1,\ldots,\lambda_{m})$, the symbol $\,(a)_{\lambda}\,$ denotes the {\em generalized Pochhammer symbol}
\begin{align*}
(a)_{\lambda} \, \coloneqq \, \prod_{i=1}^{m} \left( a-\frac{i-1}{2}\right)_{\lambda_i}.
\end{align*}
Here, for an integer $\ell$, the quantity $\, (a)_\ell =a(a+1)\cdots (a+\ell-1)\,$ with $\,(a)_0=1\,$ is the usual Pochhammer symbol.
\end{definition}

The parameters $\,a_1, \ldots, a_p\,$ and $\,c_1, \ldots, c_q\,$ in this definition are allowed to attain all complex values such that all the denominators $\,(c_i)_\lambda\,$ do not vanish. Explicitly,
\begin{equation} \label{eq:conditionOnParameters}
a_1, \ldots, a_p \in \C \ \ \text{and} \ \  c_1, \ldots, c_q \in 
\begin{cases} 
\C\setminus (-\N) &\text{if } \,m = 1, \\
\C \setminus \big\{\frac{k}{2} \mid k \in \Z, \, k \leq m-1\big\} &\text{if } \,m \geq 2.
\end{cases}
\end{equation}
\begin{rmk}	If $X=\text{diag}(x_1,0,\ldots,0)$, it follows straight forward from Definition~\ref{def:zonal} of zonal polynomials that $ {_pF_{\! q}}(a_1,\ldots,a_p;c_1,\ldots,c_q)(X) $ is the classical hypergeometric function  $ {_pF_{\! q}}(a_1,\ldots,a_p;c_1,\ldots,c_q)(x_1)$ in one variable. Therefore, Definition~\ref{definition hypergeometric function} is indeed an appropriate generalization of hypergeometric functions in one variable.
\end{rmk}

The convergence behavior of the hypergeometric function of a matrix argument is analogous to the one-variable case, basically with the same proof.
For $\,p\leq q$, this series converges for all $X$. For $p=q+1$, this series converges for $\Vert X \Vert  < 1$, where $\,\Vert  \cdot \Vert \,$ denotes the maximum of the absolute values of the eigenvalues of $X$. 
If $\,p> q+1$, the series diverges for all $X\neq 0$.

\section{Annihilating ideals of \texorpdfstring{${_1F_{\!\!\;1}}$}{1\textsubscript F\textsubscript 1}}\label{section annihilators}
Let $\,{_1F_{\!\!\;1}}\,$ be the hypergeometric function of a matrix argument as introduced in Definition~\ref{definition hypergeometric function}. In this section, we systematically study a certain ideal that annihilates ${_1F_{\!\!\;1}}$. This function depends on two complex parameters $a,c$ satisfying condition \eqref{eq:conditionOnParameters}, which in this case means
\begin{equation} \label{eq:conditionOnC}
  \begin{cases} c \notin -\N &\text{if } m=1, \\ c \notin \{\frac{k}{2} \mid k \in \Z, k \leq m-1\} &\text{if } m \geq 2. \end{cases}
\end{equation}
As discussed in the last section, the value of this function on a symmetric matrix $\,X\in\C^{m\times m}\,$ is the same as the value on the unique semisimple element in the $\,\GL_m(\C)\,$ (conjugacy) orbit closure of $X$.
We may thus restrict our attention to the case where $\,X\,$ is diagonal. Then this hypergeometric function satisfies the following differential equations.

\subsection{Setup and known results about the annihilator}\label{section setup annihilator}
\begin{thm}{\cite[Theorem 7.5.6]{Mui82}}\label{theorem muirhead}
Let $m\in \N_{>0}$ and let $a,c \in \C$ be parameters with $c$ satisfying \eqref{eq:conditionOnC}. The function $\, {_1F_{\!\!\;1}}(a;c)\,$ of a diagonal matrix argument $\, X=\diag(x_1,\ldots,x_m)\, $ 
is the unique solution $\,F\,$ of the system of the $\,m\,$ linear partial differential equations given by the operators
	\begin{align}\label{ann1F1}
	g_k \, \coloneqq \,x_k\partial_k^2 + \left( c-\frac{m-1}{2}-x_k + \frac{1}{2}\sum_{\ell\neq k} \frac{x_k}{x_k-x_\ell} \right)\partial_k 
	- \frac{1}{2}\left( \sum_{\ell\neq k} \frac{x_\ell}{x_k-x_\ell}\partial_\ell \right) - a, 
	\end{align}
	$ k= 1,\ldots,m,$
	subject to the conditions that $\,F\,$ is symmetric in $x_1,\ldots,x_m$, and $\,F\,$ is analytic at $X=0$, and $F(0)=1$.
\end{thm}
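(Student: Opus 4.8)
I would separate the two assertions---that $\,{_1F_{\!\!\;1}}(a;c)\,$ solves the system, and that it is the \emph{unique} symmetric solution that is analytic at $0$ with value $1$ there---and reduce both to the combinatorics of zonal polynomials. A first simplification: since $\,g_k\,$ is obtained from $\,g_1\,$ by the substitution $\,x_1\leftrightarrow x_k,\ \partial_1\leftrightarrow\partial_k\,$, a \emph{symmetric} function annihilated by $\,g_1\,$ is automatically annihilated by every $\,g_k\,$, so for existence it suffices to handle $\,g_1$. Next, write $\,g_k=L_k-(x_k\partial_k+a)\,$ with
\[
L_k\,\coloneqq\,x_k\partial_k^2+\Bigl(c-\tfrac{m-1}{2}\Bigr)\partial_k+\tfrac12\sum_{\ell\neq k}\frac{x_k\partial_k-x_\ell\partial_\ell}{x_k-x_\ell}.
\]
Here $\,L_k\,$ is homogeneous of degree $-1$ (its coefficients are homogeneous rational functions of degree $0$) while $\,x_k\partial_k+a\,$ is homogeneous of degree $0$. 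Hence, writing a symmetric power series as a sum of homogeneous parts $\,F=\sum_{d\ge0}F_d\,$, the system $\,g_k\bullet F=0\ (k=1,\dots,m)\,$ is equivalent to the recursion $\,L_kF_{d+1}=(x_k\partial_k+a)F_d\,$ for all $\,k\,$ and $\,d\ge0\,$, together with $\,F_0\,$ constant. (Each $\,L_kF_{d+1}\,$ is in fact a polynomial, since the numerators $\,x_k\partial_kF_{d+1}-x_\ell\partial_\ell F_{d+1}\,$ vanish along $\,x_k=x_\ell\,$ by symmetry of $\,F$.)

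For existence, insert $\,F_d=\tfrac1{d!}\sum_{\lambda\vdash d}\tfrac{(a)_\lambda}{(c)_\lambda}C_\lambda\,$ from~\eqref{eq definition hypergeometric function matrix}; what remains is the polynomial identity $\,L_1F_{d+1}=(x_1\partial_1+a)F_d\,$. This hinges on contiguity/Pieri-type relations for zonal polynomials that describe the action of the degree-lowering, first-order-corrected Laplacian $\,L_1\,$ on $\,C_\lambda\,$, relations that should be extractable from the three defining properties in Definition~\ref{def:zonal}: the eigenvalue equation $\,\Delta C_\lambda=\alpha_\lambda C_\lambda\,$, the normalization $\,\sum_{|\lambda|=d}C_\lambda=(x_1+\dots+x_m)^d\,$, and the prescribed leading monomial. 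The generalized Pochhammer symbol is tailored to make the bookkeeping work out: for $\,\mu=\lambda-e_i\,$ one has $\,(a)_\lambda/(a)_\mu=a+\lambda_i-1-\tfrac{i-1}{2}\,$ and likewise for $\,(c)_\lambda/(c)_\mu\,$, so inserting these ratios together with the Pieri coefficients reduces the identity to a finite list of scalar relations that hold tautologically. (Alternatively, one can argue analytically from an Euler-type integral representation of $\,{_1F_{\!\!\;1}}(a;c)\,$ valid for $\,\Re a,\Re(c-a)>\tfrac{m-1}{2}\,$ and extend to all admissible $\,(a,c)\,$ by analytic continuation in the parameters.)

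For uniqueness, let $\,F\,$ be symmetric, analytic at $0$, with $\,F(0)=1\,$ and $\,g_k\bullet F=0\,$ for all $\,k$; then $\,F_0\equiv1\,$, and by the recursion it suffices that $\,G\mapsto(L_kG)_{k=1}^m\,$ be injective on symmetric homogeneous polynomials of each degree $\,d+1\ge1\,$---equivalently, by the symmetry remark, that $\,L_1\,$ annihilate no nonzero symmetric homogeneous polynomial of positive degree. This is the matrix analogue of the fact that the Kummer recursion coefficient $\,(n+1)(n+c)\,$ never vanishes. Concretely, expanding a hypothetical kernel element in zonal polynomials and applying the same Pieri relations produces a homogeneous linear system for its coefficients that is triangular with respect to the dominance order on partitions, with diagonal entries equal---up to nonzero combinatorial factors---to expressions $\,c+\lambda_i-1-\tfrac{i-1}{2}\,$ with $\,\lambda_i\ge1\,$; these are nonzero precisely by the hypothesis~\eqref{eq:conditionOnC} on $\,c$. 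Hence the kernel is trivial, the $\,F_d\,$ are pinned down inductively from $\,F_0=1\,$, and $\,F={_1F_{\!\!\;1}}(a;c)$.

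The step I expect to be the real obstacle is the Pieri-type description of how the first-order-corrected, degree-lowering operators $\,L_k\,$ act on zonal polynomials, together with the bookkeeping that identifies the resulting coefficients and shows they vanish at exactly the parameters excluded by~\eqref{eq:conditionOnC}. Zonal polynomials do not enjoy the transparent combinatorial calculus available for Schur (or Jack) polynomials, so making these contiguity relations precise is the delicate point; the analytic alternative merely moves the difficulty to justifying the continuation past the excluded parameter values.
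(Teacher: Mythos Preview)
The paper does not prove this theorem; it is quoted from Muirhead's book with a citation. So there is no proof in the paper to compare against for the existence half, and your sketch of the classical route---reduce by symmetry to $g_1$, split $g_k=L_k-(x_k\partial_k+a)$ into a degree-lowering and a degree-preserving piece, and then verify a recursion on zonal expansions via Pieri-type identities---is a fair outline of how the original argument goes. You are right that the contiguity relations for zonal polynomials are the substantive step, and that you have not supplied them.

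For the uniqueness half, however, the paper does something, and it is genuinely different from what you propose. Rather than proving that $L_1$ has trivial kernel on symmetric homogeneous polynomials by a dominance-triangular system in the zonal basis, the paper (in \cref{prop:uniqueSolAroundOrigin}) passes to initial forms with respect to a generic weight $u$ with $0<u_1<\cdots<u_m$, computes
\[
\init_{(-u,u)}(P_i)\,=\,\tfrac{(-1)^{i-1}}{2}\,x_1\cdots x_{i-1}\,x_i^{m-i-1}\Big(2\theta_i^2+(2c-i-1)\theta_i-\sum_{j>i}\theta_j\Big),
\]
and checks directly that the only monomial solution to the resulting $\theta$-system is $1$, using \cref{lem:solutionInitials} to bound $\dim\Sol_{\C\llbracket x\rrbracket}(I_m)$. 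This buys two things your approach does not: it dispenses with the symmetry hypothesis entirely (any formal power series solution around $0$ with value $1$ is forced to be ${_1F_{\!\!\;1}}$), and it avoids zonal-polynomial combinatorics altogether, replacing it with an elementary computation in $\C[\theta_1,\dots,\theta_m]$. Conversely, your approach, if the Pieri step were carried out, would stay closer to the structure of the series itself and make the role of the generalized Pochhammer symbols transparent, whereas the paper's method is purely $D$-module-theoretic and says nothing about why the coefficients take the form they do.
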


In fact, we will point out in \cref{prop:uniqueSolAroundOrigin} that in this theorem, the condition of symmetry in $\,x_1, \ldots, x_m\,$ can be dropped as it is implied by the other conditions.
By using the identity
\begin{align*}
 \frac{x_k}{x_k-x_\ell} \,= \, 1\,+\,\frac{x_\ell}{x_k-x_\ell},
 \end{align*}
the operators from~\eqref{ann1F1} can be written as 
\begin{align}\label{ann1F1re}
g_k \,= \, x_k\partial_k^2 \,+\, (c-x_k)\partial_k \,+\,\frac{1}{2} \left( \sum_{\ell\neq k} \frac{x_\ell}{x_k-x_\ell}(\partial_k - \partial_\ell)\right) \,-\,a.
\end{align}
Clearing the denominators in~\eqref{ann1F1}, we obtain 
\begin{equation}\label{eq definition p}
P_k \,  \coloneqq \,  \left( \prod_{\ell\neq k} (x_k-x_\ell)\right) \cdot g_k \,\in\, D_m, \quad k \,= \, 1,\ldots,m.
\end{equation}

\begin{definition}\label{definition im}
We denote by $\,I_m\,$ the $D_m$-ideal generated by $\, P_1,\ldots,P_m\,$ and call it the \emph{Muirhead ideal}.
\end{definition}

Note that, by construction,
\[
R_m I_m \,=\, (g_1,\ldots,g_m).
\]
Our goal is to systematically study the ideal $ I_m$. In this direction, Hashiguchi--Numata--Takayama--Takemura obtained the following result in~\cite{HNTT13}.

\begin{theorem}[{\cite[Theorem~2]{HNTT13}}]\label{theorem holonomic rank im}
For the graded lexicographic term order on $R_m$, a Gröbner basis of $\,R_m I_m\,$ is given by $\{g_k = x_k \di_k^2 + \text{l.o.t.} \mid k=1,\ldots,m\}$.
\end{theorem}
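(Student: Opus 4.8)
The plan is to deduce the Gröbner basis property from a computation of the holonomic rank, showing that $\hrk(I_m) = 2^m$. Since every $x_k$ is a unit in $\C(x)$, we may work with the monic operators $\tilde g_k \coloneqq x_k^{-1} g_k = \partial_k^2 + L_k \in R_m$, where $L_k$ has order $\le 1$ and $(\tilde g_1,\dots,\tilde g_m) = R_m I_m$. In each $g_k$ the summand $x_k\partial_k^2$ is the unique one of maximal order, so for the graded lexicographic order $\LM(\tilde g_k) = \xi_k^2$; consequently $\LM(R_m I_m) \supseteq (\xi_1^2,\dots,\xi_m^2)$. The monomials of $\C(x)[\xi]$ outside $(\xi_1^2,\dots,\xi_m^2)$ are exactly the $\xi^\epsilon$ with $\epsilon \in \{0,1\}^m$, of which there are $2^m$. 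Since $\dim_{\C(x)} R_m/R_m I_m$ equals the number of monomials outside $\LM(R_m I_m)$ (see~\cite[Ch.~1]{SST00}), we obtain $\hrk(I_m) \le 2^m$, with equality if and only if $\LM(R_m I_m) = (\xi_1^2,\dots,\xi_m^2)$, that is, if and only if $\{\tilde g_1,\dots,\tilde g_m\}$ is a Gröbner basis of $R_m I_m$.

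It is instructive to see the upper bound concretely: the $2^m$ classes $\overline{\partial^\epsilon}$, $\epsilon \in \{0,1\}^m$, already span $R_m/R_m I_m$ over $\C(x)$. Indeed, if $\beta_k \ge 2$ for some $k$, write $\partial^\beta = \partial^{\beta'}\partial_k^2$ with $\abs{\beta'} = \abs{\beta}-2$; from $\partial^{\beta'}\tilde g_k \in R_m I_m$ we get $\partial^\beta \equiv -\,\partial^{\beta'} L_k \pmod{R_m I_m}$, and since $L_k$ has order $\le 1$ the right-hand side is, in normal form, a $\C(x)$-combination of monomials $\partial^\gamma$ with $\abs{\gamma} \le \abs{\beta}-1$; an induction on the order finishes the claim. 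This reduction is uniform in the parameters $a,c$.

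The lower bound $\hrk(I_m) \ge 2^m$ is the substantial point. As $\hrk(I_m)$ is finite, $W(I_m)$ is holonomic by \cref{lemma weyl closure holonomic}, and $I_m \subseteq W(I_m) \subseteq R_m I_m$ forces $R_m W(I_m) = R_m I_m$, hence $\hrk(W(I_m)) = \hrk(I_m)$. By the Cauchy--Kowalevski--Kashiwara theorem this common value equals the dimension of the space of holomorphic solutions of $g_1\bullet F = \dots = g_m\bullet F = 0$ on a simply connected neighbourhood of a generic point of $\C^m$, so it suffices to exhibit $2^m$ linearly independent such local solutions. For generic values of $c$ I would construct these by induction on $m$, the base case $m=1$ being Kummer's equation: near a generic (hence smooth) point of a component $\{x_k = 0\}$ of the singular locus, the operator $g_k$ is regular singular of order two in $x_k$ with indicial exponents $0$ and $1-c$, and the Frobenius expansion attached to each exponent is governed by a Muirhead-type system in the remaining $m-1$ variables, contributing $2^{m-1}$ solutions apiece. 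For the remaining, resonant values of $c$ one notes that $\hrk(I_m)$ is upper semicontinuous in $c$---by the previous paragraph $R_m/R_m I_m$ is generated over $\C(x)[c]$ by the $2^m$ elements $\partial^\epsilon$---while $\hrk(I_m) \le 2^m$ for all admissible $c$; hence $\hrk(I_m) = 2^m$ throughout. Together with the first paragraph, this proves the statement.

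The hard part is this lower bound, and concretely the identification, near a generic point of a component of $\slocus$, of the Muirhead-type system satisfied by the coefficients of the Frobenius expansions and the verification that its local solutions may be prescribed freely. An alternative, more computational route is to check Buchberger's criterion directly: each S-polynomial $S(\tilde g_k,\tilde g_\ell) = \partial_\ell^2\tilde g_k - \partial_k^2\tilde g_\ell$ has order at most three and must reduce to $0$ modulo $\{\tilde g_1,\dots,\tilde g_m\}$. The familiar shortcut---coprimality of the leading monomials $\xi_k^2$ and $\xi_\ell^2$---is \emph{not} available here, since in $R_m$ the commutators $[\partial_j,x_j]=1$ obstruct the product criterion; so either way one must carry out the reductions explicitly.
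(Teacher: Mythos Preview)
The paper does not prove this statement; it is quoted without proof from \cite[Theorem~2]{HNTT13}. The argument there proceeds by the ``alternative, more computational route'' you mention in your last paragraph: an explicit verification of Buchberger's criterion, reducing each S-pair $\partial_\ell^2 \tilde g_k - \partial_k^2 \tilde g_\ell$ to zero modulo $\{\tilde g_1,\dots,\tilde g_m\}$ by a direct (and somewhat lengthy) calculation that is uniform in the parameters $a,c$.

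Your primary approach---reducing the Gr\"obner basis property to the equality $\hrk(I_m)=2^m$ and establishing the lower bound by exhibiting $2^m$ independent local solutions via a Frobenius-type induction on $m$ along a coordinate hyperplane---is a reasonable strategy, but as written it is not a proof. You yourself flag that the identification of the system governing the Frobenius coefficients, and the verification that each indicial branch contributes $2^{m-1}$ independent solutions, has not been carried out; that step carries essentially all of the content of the theorem, and without it only the (easy) upper bound remains. A minor point: your semicontinuity argument for special $c$ is correct in conclusion, but the justification you give (``generated by $2^m$ elements uniformly'') is the upper bound again, not semicontinuity; what you need is the standard upper semicontinuity of fibre dimension for the finitely presented $\C(x)[a,c]$-module $R_m/R_mI_m$, together with right exactness of tensor product to identify the fibres with the specialised quotients.
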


An immediate consequence is:

\begin{corollary}\label{corollary hntt}
The holonomic rank of $\, I_m\,$ is given by $\hrk(I_m)=2^m$. In particular, the Weyl closure $\,W(I_m)\,$ of $\, I_m\,$ and the function $\,{_1F_{\!\!\;1}}\,$ of a diagonal matrix are holonomic.
\end{corollary}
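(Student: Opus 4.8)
The plan is to read the holonomic rank directly off the Gröbner basis supplied by Theorem~\ref{theorem holonomic rank im} and then feed this finiteness into the general machinery of Section~\ref{section tools}. The whole statement is a bookkeeping consequence of \cref{theorem holonomic rank im,theorem muirhead}, so I do not expect any genuine obstacle; the only points that need a little care are recalled at the end.

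First I would compute $\hrk(I_m)$. By Definition~\ref{definition holonomic rank} we have $\hrk(I_m) = \dim_{\mathbb{C}(x)} R_m/R_mI_m$, and since $\{g_1,\ldots,g_m\}$ is a Gröbner basis of $\,R_mI_m\,$ for the graded lexicographic order (Theorem~\ref{theorem holonomic rank im}), a $\mathbb{C}(x)$-basis of the quotient is given by the standard monomials, i.e.\ the monomials $\partial^\beta$ not divisible by any of the leading monomials. From the shape $g_k = x_k\partial_k^2 + \text{l.o.t.}$ in~\eqref{ann1F1re}, the leading monomial of $g_k$ over the coefficient field $\mathbb{C}(x)$ is $\partial_k^2$. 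Hence the standard monomials are exactly the $\partial^\beta$ with $\beta \in \{0,1\}^m$, and there are $2^m$ of them, so $\hrk(I_m) = 2^m$.

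For the holonomicity assertions: since $\hrk(I_m) = 2^m$ is finite, Lemma~\ref{lemma weyl closure holonomic} gives at once that $W(I_m)$ is a holonomic $D_m$-ideal. For $\,{_1F_{\!\!\;1}}\,$ of a diagonal matrix argument, note that by Theorem~\ref{theorem muirhead} the operators $g_k$ annihilate $\,{_1F_{\!\!\;1}}$, hence so do the operators $P_k = \big(\prod_{\ell\neq k}(x_k-x_\ell)\big) g_k$ from~\eqref{eq definition p}; thus $I_m \subseteq \Ann_{D_m}({_1F_{\!\!\;1}})$. Consequently $\hrk\big(\Ann_{D_m}({_1F_{\!\!\;1}})\big) \leq \hrk(I_m) = 2^m < \infty$. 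Since the ambient space of functions is a torsion-free $D_m$-module, Proposition~\ref{proposition equivalent definitions holonomic} (implication \ref*{item three proposition equivalent definitions holonomic} $\Rightarrow$ \ref*{item one proposition equivalent definitions holonomic}) then shows that $\,{_1F_{\!\!\;1}}\,$ is holonomic. Alternatively, $\Ann_{D_m}({_1F_{\!\!\;1}})$ is Weyl closed by Lemma~\ref{lemma annihilator weyl closed}, hence contains $W(I_m)$, and one concludes again via Lemma~\ref{lemma weyl closure holonomic}.

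The two points deserving care are: (i) the translation between the $R_m$-module $R_m/R_mI_m$ and the combinatorics of standard monomials — this is the standard theory underlying the holonomic rank formula in Definition~\ref{definition holonomic rank}, so it only needs to be invoked, not reproved; and (ii) the passage from the $g_k$ (with rational-function coefficients) to the $P_k \in D_m$, which does not alter the annihilated function, so that $I_m$ genuinely lies in $\Ann_{D_m}({_1F_{\!\!\;1}})$ and the rank comparison above is legitimate.
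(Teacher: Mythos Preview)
Your proof is correct and follows essentially the same approach as the paper: the paper's proof simply reads ``This immediately follows from Theorem~\ref{theorem holonomic rank im} and Lemma~\ref{lemma weyl closure holonomic},'' and you have spelled out exactly the steps that sentence encodes (counting the $2^m$ standard monomials $\partial^\beta$ with $\beta\in\{0,1\}^m$, then invoking Lemma~\ref{lemma weyl closure holonomic} and, for ${_1F_{\!\!\;1}}$, the torsion-free criterion of Proposition~\ref{proposition equivalent definitions holonomic}).
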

\begin{proof}
This immediately follows from Theorem~\ref{theorem holonomic rank im} and Lemma~\ref{lemma weyl closure holonomic}.
\end{proof}

At the end of Section~5 in~\cite{HNTT13}, it is conjectured that $\, I_m\,$ is holonomic. Via direct computation they show that $\,I_2\,$ is holonomic in Appendix A of the paper. One can still verify holonomicity of $\,I_3\,$ for generic parameters $a,c$ through a  computation in {\tt Singular}. It turns out, however, that the above conjecture does not hold. We are thankful to N.~Takayama for pointing out that the $D_4$-ideal $\, I_4 \,$ was shown to be non-holonomic in the Master's thesis~\cite{Kmaster}. We give an easy alternative argument for this in Example~\ref{example m4}.

\section{Analytic solutions to the Muirhead ideal}\label{section solutions muirhead}
In this section, we determine the singular locus of the Muirhead ideal $\, I_m\,$ and of its Weyl closure:

\begin{thm} \label{thm:singularLocus}
  Let $m \in \N _{>0}$ and let $a,c \in \C$ be parameters. Then the singular locus of $\, I_m\,$ agrees with the singular locus of $W(I_m)$. It is the hyperplane arrangement
  \begin{align}
  \slocus \, \coloneqq \, \left\{x \in \C^m \ \middle|\ \prod_{i=1}^m x_i \prod_{j\neq i} (x_i - x_j) = 0\right\}.
  \end{align}
\end{thm}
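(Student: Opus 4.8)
The plan is to prove the two inclusions $\Sing(W(I_m)) \subseteq \slocus$ and $\slocus \subseteq \Sing(I_m)$ separately, and then conclude using the obvious inclusion $\ch(W(I_m)) \subseteq \ch(I_m)$, which forces $\Sing(W(I_m)) \subseteq \Sing(I_m)$; combined with the two displayed inclusions and the chain $\Sing(W(I_m)) \subseteq \Sing(I_m) \subseteq \slocus \subseteq \Sing(I_m)$ this pins everything down to $\slocus$. So the real content is an upper bound $\Sing(I_m) \subseteq \slocus$ and a lower bound $\slocus \subseteq \Sing(W(I_m))$.

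For the upper bound, I would argue that away from $\slocus$ the system is, after inverting $\prod_i x_i \prod_{j \ne i}(x_i - x_j)$, generated by the operators $g_k$ of \eqref{ann1F1re}, whose leading symbols with respect to the order filtration $w=(0,e)$ are $x_k \xi_k^2$; since $x_k$ is a unit on the complement of $\slocus$, the symbol ideal localized there contains $\xi_1^2, \ldots, \xi_m^2$, so the characteristic variety over the complement of $\slocus$ is just the zero section. Concretely, one localizes at a point $x^0 \notin \slocus$, observes $R_m I_m = (g_1, \ldots, g_m)$ with the Gröbner basis of \Cref{theorem holonomic rank im}, and deduces that $\init_{(0,e)}(I_m)$ defines, over that chart, only the zero section — hence no component of $\ch(I_m)$ other than the zero section projects into the complement of $\slocus$, i.e.\ $\Sing(I_m) \subseteq \slocus$. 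The same symbol computation applies verbatim to $W(I_m) \supseteq I_m$ since $R_m W(I_m) = R_m I_m$.

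For the lower bound $\slocus \subseteq \Sing(W(I_m))$, the idea is to exhibit, near a generic point of each irreducible hyperplane $\{x_i = 0\}$ and $\{x_i = x_j\}$, a holomorphic solution of the Muirhead system that fails to extend holomorphically across that hyperplane; by the Cauchy--Kowalevski--Kashiwara theorem the solution sheaf of the holonomic ideal $W(I_m)$ is locally constant of rank $\hrk(I_m) = 2^m$ on the complement of $\Sing(W(I_m))$, so a solution with genuine monodromy or a genuine branch point along a hyperplane $H$ shows $H \subseteq \Sing(W(I_m))$. Since the system restricts, along the locus where the relevant eigenvalues are pairwise distinct and nonzero, essentially to the one-variable confluent equation $x_k \partial_k^2 + (c - x_k)\partial_k - a$ in the $k$-th variable (the cross terms $\tfrac12 \sum_{\ell \ne k} \tfrac{x_\ell}{x_k - x_\ell}(\partial_k - \partial_\ell)$ are regular there), one can produce a local solution behaving like $x_k^{1-c}$ times a unit near $\{x_k = 0\}$, which is multivalued for generic $c$ and hence forces $\{x_k = 0\} \subseteq \Sing$. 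For the diagonal hyperplanes $\{x_i = x_j\}$ I would change coordinates to $s = x_i - x_j$, $t = x_i + x_j$ and the remaining variables, and analyze the indicial equation of the system in $s$ at $s = 0$: I expect the exponents to be $0$ and something like $1/2$ or another non-integer, again producing a branch point. The honest subtlety — and the part I expect to be the main obstacle — is that for special integer or half-integer values of $a, c$ (and the parameter shifts occurring along diagonals) these local exponents can become integers, so that a naive monodromy argument breaks; one then needs either to produce logarithmic solutions, or to argue via a more robust invariant (e.g.\ directly showing a component of $\ch(W(I_m))$ is the conormal to the hyperplane, as in \Cref{corollary lower bound char im}), to get the inclusion for all parameters rather than just generic ones. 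A clean way around this is to compute $\ch(I_m)$ explicitly enough near these hyperplanes to see the conormal components appear unconditionally — which is exactly what the characteristic-variety analysis of \Cref{section charvar} provides — and then invoke $\ch(W(I_m)) \subseteq \ch(I_m)$ together with a separate direct verification that these conormal components do lie in $\ch(W(I_m))$.
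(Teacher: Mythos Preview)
Your upper bound $\Sing(I_m) \subseteq \slocus$ is correct and matches the paper's argument via $\init_{(0,e)}(P_i) = x_i \prod_{j \neq i}(x_i - x_j)\,\xi_i^2$.

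For the lower bound your strategy is genuinely different from the paper's. You want to construct multivalued local solutions near each hyperplane and detect the singularity via monodromy. The paper instead bounds the dimension of the space of \emph{formal power series} solutions at a generic point of each hyperplane from \emph{above} (\cref{lem:solAroundZeroComp,lem:solAroundDiagComp}), showing it is strictly less than $\hrk(I_m)=2^m$, and then applies Cauchy--Kowalevski--Kashiwara contrapositively. The key tool is \cref{lem:solutionInitials}: pass to $\init_{(-u,u)}$ for a well-chosen weight $u$, where the solution space is easy to count. This buys uniformity---for the diagonal hyperplanes the bound $3\cdot 2^{m-2}$ holds for \emph{all} $a,c$, with no indicial analysis needed; for the coordinate hyperplanes a condition on $c$ is still needed in \cref{section solutions muirhead} and is removed only in the appendix via restriction modules.

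Your proposal has a real gap precisely at the point you flag. You correctly observe that the monodromy argument breaks for special parameters, but your fallback is circular: you propose to invoke \cref{corollary lower bound char im}, or the analysis of \cref{section charvar}, to place the conormal components directly into $\ch(W(I_m))$. But \cref{corollary lower bound char im} is a \emph{corollary} of \cref{thm:singularLocus}, not an independent input; and \cref{section charvar} supplies only an \emph{upper} bound for $\ch(I_m)$ (\cref{proposition upper bound}), which via $\ch(W(I_m)) \subseteq \ch(I_m)$ can never yield a lower bound for $\ch(W(I_m))$. Separately, the assertion that near $\{x_k=0\}$ the system ``restricts essentially to the one-variable confluent equation'' is not justified: the cross terms are regular in $x_k$ but do not vanish, so a solution of $x_k\partial_k^2+(c-x_k)\partial_k-a$ alone is not a solution of the full system, and you have not indicated how to promote $x_k^{1-c}$ to one.
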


\noindent To be more precise, in this section we will prove the statement under the additional
\begin{assumption}\label{assumption parameter}
The parameter $\,c\,$ satisfies condition \eqref{eq:conditionOnC}.
\end{assumption}
Note that this condition makes the function $\,{_1F_1(a;c)}\,$ well-defined. However, we would like to point out that this assumption is not necessary; a proof of the stronger statement is given in \cref{sec:appendix}. We are grateful to the anonymous referee for suggesting to investigate {\em restriction modules} which are the central tool in the proof presented there. As these are different techniques, we deem it worthwhile to also present our original proof, which is the purpose of this section.

The inclusion $\,\Sing(I_m) \subseteq \slocus\,$ is readily seen from \[\init_{(0,e)}(P_i) \,=\, x_i \prod_{j \neq i} (x_i - x_j) \partial_i^2.\]
To prove the reverse containment, we investigate analytic solutions to the Muirhead system locally around points in the components of the arrangement $\slocus$. Our main technical tool is the following observation resembling \cite[Theorem 2.5.5]{SST00}:

\begin{lemma} \label{lem:solutionInitials}
  Let $\,I\,$ be a $D_m$-ideal and let $u \in \R_{\geq 0}^m$. Then
  \[\dim \Sol_{\C\llbracket x \rrbracket}(I) \, \leq \, \dim \Sol_{\C\llbracket x \rrbracket}(\init_{(-u,u)}(I)),\]
  where $\,\Sol_{\C\llbracket x \rrbracket}(\cdot)\,$ denotes the solution space in the formal power series ring $\C\llbracket x\rrbracket$.
\end{lemma}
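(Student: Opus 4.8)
The plan is to adapt the classical comparison between solutions of a $D$-ideal and solutions of its initial ideal (in the spirit of \cite[Theorem~2.5.5]{SST00}) to the formal power series setting. Write $w = (-u,u)$. Since $u \in \R_{\geq 0}^m$ we have $(-u)+u = 0$, so $\gr_w(D_m) = D_m$ and $\init_w(I)$ is again a genuine $D_m$-ideal. The weight $w$ also grades $\C\llbracket x \rrbracket$: the monomial $x^\alpha$ is assigned weight $-\alpha\cdot u \leq 0$ (equivalently, $u$-order $\alpha\cdot u \in \R_{\geq 0}$) and $\partial_i$ has weight $u_i$, compatibly with the $D_m$-action. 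The first thing I would record is that, because $u \geq 0$, for every nonzero $f = \sum_\alpha c_\alpha x^\alpha$ the quantity $\ord_u(f) \coloneqq \min\{\alpha\cdot u \mid c_\alpha \neq 0\}$ is actually attained, so the $u$-homogeneous ``initial part'' $\init_u(f) \coloneqq \sum_{\alpha\cdot u = \ord_u(f)} c_\alpha x^\alpha \in \C\llbracket x\rrbracket$ is well-defined and nonzero.

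Next I would show that taking initial parts maps solutions to solutions. Given $P \in I$ with $P\bullet f = 0$ and $f\neq 0$, decompose $P = \init_w(P) + P'$ with $P'$ of strictly smaller $w$-degree and $f = \init_u(f) + f'$ with $f'$ of strictly larger $u$-order. Applying a $w$-homogeneous operator of $w$-degree $d$ to a $u$-homogeneous series of $u$-order $s$ produces a $u$-homogeneous series of $u$-order $s-d$, so the component of $P\bullet f$ of minimal $u$-order (equivalently, extremal weight) is exactly $\init_w(P)\bullet \init_u(f)$; since $P\bullet f = 0$ this component vanishes. As $\init_w(I)$ is generated by the $\init_w(P)$, $P \in I$, this shows $\init_u(f) \in \Sol_{\C\llbracket x\rrbracket}(\init_w(I))$.

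Finally I would upgrade this pointwise statement to the dimension inequality. Fix any finite-dimensional subspace $V \subseteq \Sol_{\C\llbracket x \rrbracket}(I)$; it suffices to produce $\dim V$ linearly independent elements of $\Sol_{\C\llbracket x\rrbracket}(\init_w(I))$, and then let $V$ exhaust $\Sol_{\C\llbracket x\rrbracket}(I)$. Refine the $u$-order to a term order $\prec$ on $\N^m$ (breaking ties lexicographically); using $u\geq 0$ once more, $\prec$ is a well-order, so every nonzero $f$ has a leading exponent $\operatorname{lexp}(f) = \min_\prec \supp(f)$, and by construction $\operatorname{lexp}(\init_u(f)) = \operatorname{lexp}(f)$. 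A standard echelon argument (valid because $\prec$ is a well-order) shows that $E \coloneqq \{\operatorname{lexp}(f) \mid f \in V\setminus\{0\}\}$ has exactly $\dim V$ elements and that picking some $f_e \in V$ with $\operatorname{lexp}(f_e)=e$ for each $e\in E$ gives a basis of $V$. Then the series $\init_u(f_e)$, $e\in E$, have pairwise distinct leading exponents, hence are linearly independent, and all lie in $\Sol_{\C\llbracket x\rrbracket}(\init_w(I))$ by the previous paragraph.

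The step I expect to be the real obstacle is the bookkeeping in this last paragraph: since $u$ may have vanishing or irrationally related coordinates, the $u$-graded pieces of $\C\llbracket x\rrbracket$ need not be finite-dimensional and the set of occurring $u$-orders need not be discrete, so one cannot simply reduce to a finite-dimensional leading-term computation. The resolution — and the essential place where the hypothesis $u \in \R_{\geq 0}^m$ is used — is that $u$-order is bounded below on any given power series and refines to a well-order on $\N^m$, which is exactly what legitimizes both the ``minimal $u$-order component'' and the echelon argument; everything else reduces to the routine comparison of extremal-weight parts.
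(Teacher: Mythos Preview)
Your argument is correct and follows essentially the same approach as the paper: show that $\init_w(P)$ annihilates $\init_u(f)$ by comparing extremal-weight components, then pass from pointwise to the dimension inequality by arranging linearly independent initial forms. Your echelon argument via a term-order refinement is a more explicit justification of what the paper handles in one sentence (``replacing $F$ by a suitable linear combination of its elements, we can assure that the initial forms $\init_{-u}(f)$ for $f\in F$ are linearly independent''), but the substance is the same.
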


\begin{proof}
  For $f = \sum_{\alpha \in \N^m} \lambda_\alpha x^\alpha \in \C\llbracket x \rrbracket$, we denote\footnote{Our notation for the initial of a formal power series differs from the one used, among others, in~\cite{SST00,SatStu}. Ours is more coherent with the definition of initial forms of linear differential operators.}
  \[\init_{-u}(f) \, \coloneqq \sum_{\substack{u^T \alpha \text{ min.} \\ \text{with } \lambda_\alpha \neq 0}} \lambda_\alpha x^{\alpha} \in \C\llbracket x \rrbracket.\]
  If $\,P = \init_{(-u,u)}(P) + \tilde{P} \in D_m\,$ annihilates $f = \init_{-u}(f) + \tilde{f}$, then
  \[0 \,=\, P \bullet f \,=\, \init_{(-u,u)}(P) \bullet \init_{-u}(f) \,+\, \tilde{P} \bullet f \,+\, \init_{(-u,u)}(P) \bullet \tilde f\]
  and all monomials appearing in the expanded expression $\,\tilde{P} \bullet f + \init_{(-u,u)}(P) \bullet f\,$ are of higher $u$-degree than those of $\init_{(-u,u)}(P) \bullet \init_{-u}(f)$. Hence, $\init_{(-u,u)}(P)$ annihilates $\init_{-u}(f)$. This shows that for every $D_m$-ideal $I$, we have
  \begin{equation} \label{eq:initialsCommuteWithSols}
 \left\{\init_{-u}(f) \mid f \in \Sol_{\C\llbracket x \rrbracket}(I)\right\}\, \subseteq \,\Sol_{\C\llbracket x \rrbracket}(\init_{(-u,u)}(I)).
  \end{equation}
  Let $\,F\,$ be a basis of the solution space $\Sol_{\C\llbracket x \rrbracket}(I)$. Replacing $\,F\,$ by a suitable linear combination of its elements, we can assure that the initial forms $\,\init_{-u}(f)\,$ for $\,f \in F\,$ are linearly independent. Then \eqref{eq:initialsCommuteWithSols} implies
  \[\dim \Sol_{\C\llbracket x \rrbracket}(\init_{(-u,u)}(I)) \,\geq\, |\{ \init_{-u}(f)\mid f\in F \}| \,=\, |F| \,=\, \dim \Sol_{\C\llbracket x \rrbracket}(I). \qedhere\]
\end{proof}

In the following two lemmata, we apply \cref{lem:solutionInitials} to the Muirhead system and bound the spaces of analytic solutions locally around general points in $\slocus$. Note that up to $\mathfrak S_m$-symmetry, there are two types of components in $\slocus$, namely $\{x \in \C^m \mid x_1 = 0\}$ and $\{x \in \C^m \mid x_1 = x_2\}$. \cref{lem:solAroundZeroComp} considers points that lie in exactly one component of $\,\slocus\,$ of the first type, while \cref{lem:solAroundDiagComp} is concerned with the second type.

\begin{lemma} \label{lem:solAroundZeroComp}
  Let $\,p \in \C^m\,$ be a point with distinct coordinates, one of which is zero. If $a,c \in \C$ with $c \notin (m-1)/2 -\N$, then the space of formal power series solutions to $\, I_m\,$ centered at $\,p\,$ is of dimension at most $2^{m-1}$.
\end{lemma}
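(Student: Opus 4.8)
The strategy is to translate the center $p$ to the origin, apply \cref{lem:solutionInitials} with a carefully chosen weight, and recognise the resulting initial system as essentially the Muirhead system in one fewer variable, evaluated at a \emph{nonsingular} point. Since $I_m$ is invariant under the $\mathfrak{S}_m$-action permuting $x_1, \dots, x_m$ (which merely permutes the generators $P_1, \dots, P_m$), I may assume that the vanishing coordinate is the first one, so $p = (0, p_2, \dots, p_m)$ with $p_2, \dots, p_m$ pairwise distinct and nonzero. Passing to the shifted coordinates $y_1 = x_1$ and $y_i = x_i - p_i$ for $i \geq 2$, a formal power series solution centered at $p$ is precisely an element of $\C\llbracket y \rrbracket$ annihilated by the translated ideal $\widetilde I_m = (\widetilde P_1, \dots, \widetilde P_m)$. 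I then apply \cref{lem:solutionInitials} to $\widetilde I_m$ with the weight $u = (1, 0, \dots, 0) \in \R_{\geq 0}^m$; this reduces the claim to the bound $\dim \Sol_{\C\llbracket y \rrbracket}\bigl(\init_{(-u,u)}(\widetilde I_m)\bigr) \leq 2^{m-1}$, and since $\init_{(-u,u)}(\widetilde I_m)$ contains the subideal generated by $\init_{(-u,u)}(\widetilde P_1), \dots, \init_{(-u,u)}(\widetilde P_m)$, it suffices to bound the solution space of the latter.

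The key computational step is to determine these initial forms. Because $p$ has pairwise distinct coordinates exactly one of which vanishes, after the translation every factor $x_i - x_j$ with $i \neq j$ and every $x_i$ with $i \geq 2$ becomes a unit in $\C\llbracket y \rrbracket$. Expanding the rational coefficients of the $g_k$ accordingly and keeping track of the monomials $y^\alpha \partial^\beta$ that maximise $\beta_1 - \alpha_1$ once the denominators are cleared, one obtains
\begin{equation*}
  \init_{(-u,u)}(\widetilde P_1) = U \cdot \Bigl( y_1 \partial_1^2 + \bigl(c - \tfrac{m-1}{2}\bigr)\partial_1 \Bigr), \qquad
  \init_{(-u,u)}(\widetilde P_k) = V_k \cdot \Bigl( g_k^{(m-1)} - \tfrac12 \tfrac{y_1}{y_k + p_k}\,\partial_1 \Bigr) \quad (k \geq 2),
\end{equation*}
where $U, V_k \in \C\llbracket y \rrbracket$ are units and $g_k^{(m-1)}$ denotes the $k$-th Muirhead operator, with the same parameters $a, c$, of the $(m-1)$-variable system in the variables $x_2, \dots, x_m$, written in the shifted coordinates. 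Verifying these identities — in particular tracking the cross-terms produced when clearing denominators in $\widetilde P_k$ — is the main obstacle; everything afterwards is formal.

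Granting the formulas, let $f \in \C\llbracket y \rrbracket$ be annihilated by all $\init_{(-u,u)}(\widetilde P_k)$. Dividing by the unit $U$, the first operator forces $\bigl(y_1 \partial_1^2 + (c - \tfrac{m-1}{2})\partial_1\bigr) \bullet f = 0$; writing $f = \sum_{j \geq 0} f_j(y_2, \dots, y_m)\, y_1^j$ and comparing coefficients of $y_1^{j-1}$ yields $j\bigl(j - 1 + c - \tfrac{m-1}{2}\bigr) f_j = 0$ for every $j \geq 1$. The hypothesis $c \notin \tfrac{m-1}{2} - \N$ is exactly the statement that $j - 1 + c - \tfrac{m-1}{2} \neq 0$ for all $j \geq 1$, so $f_j = 0$ for $j \geq 1$ and $f$ is independent of $y_1$. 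Consequently $\partial_1 \bullet f = 0$, the coupling terms $\tfrac{y_1}{y_k+p_k}\partial_1$ drop out, and the remaining relations become $g_k^{(m-1)} \bullet f = 0$ for $k = 2, \dots, m$. Thus $f$, regarded as a formal power series in $x_2, \dots, x_m$ centered at $p' = (p_2, \dots, p_m)$, is a solution of the Muirhead ideal $I_{m-1}$, and the assignment $f \mapsto f$ injects $\Sol_{\C\llbracket y \rrbracket}\bigl((\init_{(-u,u)}(\widetilde P_i))_i\bigr)$ into $\Sol_{\C\llbracket y_2, \dots, y_m \rrbracket}(I_{m-1})$.

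It remains to bound $\dim \Sol(I_{m-1})$ at $p'$. Since $p'$ has pairwise distinct nonzero coordinates, it lies outside the union of the hyperplanes $\{x_i = 0\}$ and $\{x_i = x_j\}$, hence outside $\Sing(I_{m-1})$ (the easy inclusion, as in the discussion preceding \cref{lem:solutionInitials}). Consequently $x_k$ and $x_k - x_\ell$ are units in $\C\llbracket y_2, \dots, y_m \rrbracket$, so each $g_k^{(m-1)}$ may be normalised to $\partial_k^2 + A_k$ with $\ord(A_k) \leq 1$; reading off the coefficient of each monomial of a solution $f$ from the relations $\partial_k^2 \bullet f = -A_k \bullet f$ by induction on total degree shows that $f$ is determined by its finitely many coefficients indexed by $\beta \in \{0,1\}^{m-1}$, whence $\dim \Sol_{\C\llbracket y_2, \dots, y_m \rrbracket}(I_{m-1}) \leq 2^{m-1}$. (Equality in fact holds, by the Cauchy--Kowalevski--Kashiwara theorem together with $\hrk(I_{m-1}) = 2^{m-1}$ from \cref{corollary hntt}, but only the inequality is needed here.) Chaining the inequalities yields $\dim \Sol_{\C\llbracket y \rrbracket}(\widetilde I_m) \leq 2^{m-1}$, as asserted.
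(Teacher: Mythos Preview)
Your argument is correct and follows a genuinely different route from the paper's. The paper chooses the weight $u=(3,2,\dots,2)$, so that \emph{all} variables acquire positive weight; the resulting initial forms are Euler operators
\[
  \init_{(-u,u)}\bigl(\restr{P_1}{x\mapsto x+p}\bigr)\,\propto\,\tfrac{1}{x_1}\theta_1\bigl(\theta_1+c-\tfrac{m+1}{2}\bigr),\qquad
  \init_{(-u,u)}\bigl(\restr{P_i}{x\mapsto x+p}\bigr)\,\propto\,\tfrac{1}{x_i^2}\theta_i(\theta_i-1)\ \ (i\geq 2),
\]
whose joint formal solutions are visibly spanned by the $2^{m-1}$ monomials $x_2^{\alpha_2}\cdots x_m^{\alpha_m}$ with $\alpha_i\in\{0,1\}$. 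Your choice $u=(1,0,\dots,0)$ instead isolates only the variable that vanishes at $p$, and the initial system you obtain is (after the decoupling step forced by $c\notin\tfrac{m-1}{2}-\N$) the translated $(m{-}1)$-variable Muirhead system at the nonsingular point $p'=(p_2,\dots,p_m)$; the bound $2^{m-1}$ then comes from $\hrk(I_{m-1})$ or, equivalently, from the elementary recursion you sketch. I verified your initial-form identities: with $W=\prod_{\ell\neq 1,k}(y_k+p_k-y_\ell-p_\ell)$ one has exactly
\[
  \init_{(-u,u)}(\widetilde P_1)=(-1)^{m-1}\!\!\prod_{\ell\geq 2}(y_\ell+p_\ell)\cdot\bigl(y_1\partial_1^2+(c-\tfrac{m-1}{2})\partial_1\bigr),\quad
  \init_{(-u,u)}(\widetilde P_k)=(y_k+p_k)W\,\widetilde g_k^{(m-1)}-\tfrac12 y_1W\partial_1,
\]
so the ``main obstacle'' you flag is in fact routine.

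What each approach buys: the paper's weight gives a completely explicit monomial basis of the initial solution space in one stroke and requires no appeal to the lower-dimensional system. Your approach is more structural---it exhibits the recursion $I_m\rightsquigarrow I_{m-1}$ under restriction to a coordinate hyperplane---at the price of invoking \cref{corollary hntt} (or its elementary substitute) a second time. Both proofs use the hypothesis on $c$ at the same place: to force the first-variable indicial equation $j(j-1+c-\tfrac{m-1}{2})=0$ to have $j=0$ as its only nonnegative integer root.
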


\begin{proof}
  Since $\, I_m\,$ is invariant under the action of the symmetric group $\mathfrak{S}_m$, we may assume that the point $\,p = (p_1, \ldots, p_m)\,$ has the unique zero coordinate $p_1 = 0$. Studying formal power series solutions to $\, I_m\,$ around $\,p\,$ is equivalent to substituting $\,x_i\,$ by $\,x_i + p_i\,$ in each of the generators $\,P_1, \ldots, P_m\,$ and to studying  the solutions in $\,\C \llbracket x \rrbracket\,$ of the resulting operators. 
  Let us define $u \coloneqq  (3,2,\ldots,2) \in \R^m$.
  Examining the expression for $P_1, \ldots, P_m$, we observe that
  \begin{equation} \label{eq:shiftedInitialsZeroComp}
  \begin{array}{lcl}
  \init_{(-u,u)}\big(\restr{P_1}{x\, \mapsto x+p}\big) &= &(-1)^{m-1}
  p_2 p_3 \cdots p_m \frac{1}{x_1} \theta_1\Big(\theta_1 + c - \frac{m + 1}{2}\Big) \quad \ \text{and} \\
  \init_{(-u,u)}\big(\restr{P_i\,}{x \,\mapsto x+p}\big) &= &p_i \prod_{j \neq i} (p_i - p_j) \frac{1}{x_i^2} \theta_i(\theta_i-1) \qquad \text{for all }\, i \geq 2,
  \end{array}
  \end{equation}
  where $\theta_i \coloneqq x_i \partial_i$ and $\,\restr{P_i}{x\, \mapsto x+p}\,$ denotes the operator obtained from $\,P_i\,$ by replacing $x$ with $x+p$. Note that an operator $\,P(\theta_1,\ldots,\theta_m)\in\C[\theta_1, \ldots, \theta_m] \subseteq D_m\,$ acts on the one-dimensional vector spaces $\,\C \cdot x^\alpha\,$ for $\,\alpha \in \N^m\,$ with eigenvalue $P(\alpha)$. In particular, 
  the space of solutions in $\,\C\llbracket x \rrbracket\,$ of the operators \eqref{eq:shiftedInitialsZeroComp} is spanned by the $\,2^{m-1}\,$ monomials $\,x^{\alpha}\,$ with $\,\alpha_1 = 0\,$ and $\,\alpha_i \in \{0,1\}\,$ for all $i \geq 2$. Here, we have used that $\,\frac{m + 1}{2} - c \notin \N _{>0}\,$ by \cref{assumption parameter} on $c$, which guarantees that formal power series solutions to $\,\theta_1 + c - \frac{m + 1}{2}\,$ are constant in $x_1$.
  In particular, from \cref{lem:solutionInitials}, we conclude
  \begin{align*}
  \dim \Sol_{\C\llbracket x \rrbracket}\big(\restr{I_m}{x \, \mapsto x+p}\big) & \,\leq \, \dim \Sol_{\C\llbracket x \rrbracket}\big(\!\init_{(-u,u)}\big(\restr{I_m}{x \, \mapsto x+p}\big)\big) \\ &\, \leq \, \dim \Sol_{\C\llbracket x \rrbracket}\big(\!\init_{(-u,u)}\big(\restr{P_i}{x \,\mapsto x+p}\big) \mid i = 1, \ldots, m\big) \,=\, 2^{m-1}.
  \end{align*}
\end{proof}

\begin{lemma} \label{lem:solAroundDiagComp}
  Let $\,p = (p_1, \ldots, p_m) \in (\C^*)^m\,$ with $\,\#\{p_1, \ldots, p_m\} = m-1$. For all $a,c \in \C$, the space of formal power series solutions to $\, I_m\,$ centered at $\,p\,$ is of dimension at most $2^{m-2} \cdot 3$.
\end{lemma}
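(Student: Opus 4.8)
The plan is to follow the pattern of the proof of \cref{lem:solAroundZeroComp}. Since $I_m$ is stable under the $\mathfrak{S}_m$-action that permutes the $x_k$ and the generators $P_k$ simultaneously, I may assume that the unique coincidence among the coordinates of $p$ is $p_1 = p_2$; set $q \coloneqq p_1$, so that $q \neq 0$ and $p_3, \dots, p_m$ are pairwise distinct, nonzero, and different from $q$. Studying formal power series solutions of $I_m$ centered at $p$ is the same as studying solutions in $\C\llbracket x\rrbracket$ of the shifted operators $\tilde P_k \coloneqq \restr{P_k}{x\mapsto x+p}$, and by \cref{lem:solutionInitials} one has $\dim\Sol_{\C\llbracket x\rrbracket}(\tilde I_m) \leq \dim\Sol_{\C\llbracket x\rrbracket}(\init_{(-u,u)}(\tilde I_m)) \leq \dim\Sol_{\C\llbracket x\rrbracket}(\init_{(-u,u)}\tilde P_1, \dots, \init_{(-u,u)}\tilde P_m)$ for any $u\in\R_{\geq 0}^m$, where $\tilde I_m = (\tilde P_1,\dots,\tilde P_m)$; so it suffices to bound the last quantity for one convenient $u$.

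I would take the uniform weight $u = (1, \dots, 1)$. For $i \geq 3$ the second-order part of $P_i$ is $x_i\prod_{\ell\neq i}(x_i-x_\ell)\,\partial_i^2$, whose coefficient does not vanish at $p$, so $\init_{(-u,u)}(\tilde P_i) = c_i\,\partial_i^2$ with $c_i = p_i\prod_{\ell\neq i}(p_i-p_\ell)\neq 0$. For $i\in\{1,2\}$ the polynomial prefactor of $P_i$ vanishes at $p$; writing $\tilde P_1 = (x_1-x_2)\,E_1\,\restr{g_1}{x\mapsto x+p}$ with $E_1 = \prod_{\ell\geq 3}(x_1-x_\ell+q-p_\ell)$ a unit in $\C\llbracket x\rrbracket$, the cancellation $(x_1-x_2)\cdot\tfrac{x_2+q}{x_1-x_2} = x_2+q$ together with a term-by-term weight count yields, up to nonzero scalars, $\init_{(-u,u)}(\tilde P_1) = Q_1$ and $\init_{(-u,u)}(\tilde P_2) = Q_2$, where
\[
Q_1 \coloneqq (x_1-x_2)\,\partial_1^2 + \tfrac12(\partial_1-\partial_2), \qquad Q_2 \coloneqq (x_2-x_1)\,\partial_2^2 + \tfrac12(\partial_2-\partial_1).
\]
Note that neither $a$ nor $c$ enters $Q_1, Q_2, \partial_3^2, \dots, \partial_m^2$, which is why no hypothesis on the parameters is needed.

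It remains to bound $\dim\Sol_{\C\llbracket x\rrbracket}(Q_1, Q_2, \partial_3^2, \dots, \partial_m^2)$. The operators $\partial_i^2$ for $i\geq 3$ force a solution $f$ to have degree at most $1$ in each of $x_3, \dots, x_m$, so $f = \sum_{S\subseteq\{3,\dots,m\}} g_S\,\prod_{i\in S}x_i$ with $g_S\in\C\llbracket x_1,x_2\rrbracket$; since $Q_1,Q_2$ involve only $x_1,x_2,\partial_1,\partial_2$ and commute with multiplication by $x_3,\dots,x_m$, the solution space is the direct sum over the $2^{m-2}$ subsets $S$ of the common solution space $V \coloneqq \Sol_{\C\llbracket x_1,x_2\rrbracket}(Q_1,Q_2)$. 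Passing to the coordinates $s = x_1-x_2$ and $t = x_1+x_2$, one has $(Q_1,Q_2) = (Q_1+Q_2,\, Q_1-Q_2)$ with $Q_1+Q_2 = 4\,s\,\partial_s\partial_t$ and $Q_1-Q_2 = 2\bigl(s\,\partial_s^2 + s\,\partial_t^2 + \partial_s\bigr)$: the first equation gives $g = A(s) + B(t)$, and a short computation (feeding this into the second equation and comparing coefficients in $\C\llbracket s,t\rrbracket$) shows that $g$ lies in the span of $1$, $x_1+x_2$, and $(x_1-x_2)^2 - 2(x_1+x_2)^2$, so $\dim V = 3$. Therefore $\dim\Sol_{\C\llbracket x\rrbracket}(\tilde P_1,\dots,\tilde P_m) \leq 3\cdot 2^{m-2}$, which is the assertion.

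The conceptual skeleton --- reduction by symmetry, passage to an initial ideal via \cref{lem:solutionInitials}, and splitting off the $2^{m-2}$ ``generic'' directions $x_3,\dots,x_m$ --- is the same as in \cref{lem:solAroundZeroComp}. The genuinely new point, and the step I expect to demand the most care, is the coupled pair $\tilde P_1,\tilde P_2$ at the diagonal: one must verify that the uniform weight admits no spurious terms into $Q_1, Q_2$ (in particular no $\partial_\ell$ with $\ell\geq 3$), and then carry out the small indicial computation in $\C\llbracket s,t\rrbracket$ that pins $\dim V$ to $3$ --- the $s$-dependence being governed, because of the extra first-order term $\partial_s$ in $Q_1-Q_2$, by the squared Euler operator $s\,\partial_s^2+\partial_s = \tfrac1s\,\theta_s^2$, which is what cuts the naive count $2\times 2$ down to $3$.
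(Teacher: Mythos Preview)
Your proof is correct and follows essentially the same approach as the paper's: the same symmetry reduction, the same uniform weight $u=(1,\dots,1)$, the same initial forms (your $Q_1,Q_2$ agree with the paper's up to the explicit nonzero scalar $q\prod_{j\ge 3}(q-p_j)$), and the same change of variables on the coupled two-variable system, yielding the identical three-dimensional solution space (your basis element $(x_1-x_2)^2-2(x_1+x_2)^2$ is $-(x_1^2+6x_1x_2+x_2^2)$, matching the paper). The only cosmetic difference is that you work with $Q_1\pm Q_2$ in the variables $s=x_1-x_2$, $t=x_1+x_2$, whereas the paper uses $y_1=(x_1+x_2)/2$, $y_2=(x_1-x_2)/2$ and first sums the two equations; the content is the same.
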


\begin{proof}
  We proceed similar to the proof of \cref{lem:solAroundZeroComp}. By symmetry of $I_m$, we may assume that $p_1 = p_2$, while all other pairs of coordinates of $\,p\,$ are distinct. Denote $e := (1, \ldots, 1) \in \N^m$. Then 
  $$ \begin{array}{lcl}
  \init_{(-e,e)}\big(\restr{P_1}{x \,\mapsto x+p}\big)  & \,=\, &\phantom{-}\frac{1}{2}p_1 \prod_{j=3}^m (p_1-p_j) \cdot  (2(x_1-x_2)\partial_1^2+\partial_1-\partial_2), \\
  \init_{(-e,e)}\big(\restr{P_2}{x \,\mapsto x+p}\big)  &\,=\, &-\frac{1}{2}p_2 \prod_{j=3}^m (p_2-p_j) \cdot  (2(x_1-x_2)\partial_2^2+\partial_1-\partial_2), \\
  \init_{(-e,e)}\big(\restr{P_i\,}{x \,\mapsto x+p}\big)& \,=\, &\phantom{-\frac{1}{2}} p_i \, \prod_{j\neq i} \; (p_i-p_j) \cdot \frac{1}{x_j^2} \theta_i(\theta_i-1) \qquad  \text{for }\,i \geq 3\\
  \end{array}$$
  with $\theta_i \coloneqq x_i \partial_i$. From the identity $\,\theta_i \bullet x^\alpha = \alpha_i x^\alpha\,$  for all $\,\alpha \in \N^m\,$ we deduce that a basis of $\,\Sol_{\C\llbracket x \rrbracket}\big(\big\{\restr{P_i}{x\,\mapsto x+p} \mid i\big\}\big)\,$ is given by $f(x_1,x_2) x_3^{\alpha_3} x_4^{\alpha_4} \cdots x_m^{\alpha_m}$, where $\,{\alpha_3, \ldots, \alpha_m \in \{0,1\}}\,$ and where $\,f\,$ varies over a basis of
  \[\Sol_{\C\llbracket x_1, x_2 \rrbracket}\Big(2(x_1-x_2)\partial_1^2+\partial_1-\partial_2,\; 2(x_1-x_2)\partial_2^2+\partial_1-\partial_2\Big).\]
  The latter is a $3$-dimensional vector space spanned by $\{1,\, x_1+x_2,\, x_1^2+6x_1 x_2 + x_2^2\}$. This can be easily verified as follows.
  After the change of variables 
  \[y_1 \, \coloneqq \,  (x_1+x_2)/2, \quad y_2 \,\coloneqq \, (x_1-x_2)/2, \quad \partial_{y_1} \,=\, \partial_1 + \partial_2, \quad \partial_{y_2} \,=\, \partial_1 - \partial_2,\]
  this system becomes
  \begin{align*}
  \left(y_2\left(\partial_{y_1} + \partial_{y_2}\right)^2 + \partial_{y_2}\right) \bullet f \,=\, 0, \qquad 
  \left(y_2\left(\partial_{y_1} - \partial_{y_2}\right)^2 + \partial_{y_2}\right) \bullet f \,=\, 0
  \end{align*}
  From summing these two equations, we observe that a solution $f \in \C\llbracket y_1, y_2 \rrbracket$ needs to be annihilated by the operator $\partial_{y_1} \partial_{y_2}$. Therefore, we can write any solution as $f = \sum_{i \geq 0} \lambda_i y_1^i + \sum_{j \geq 1} \mu_j y_2^j$. Plugging this into $(y_2(\partial_{y_1} + \partial_{y_2})^2 + \partial_{y_2}) \bullet f = 0$, we observe that $\,\lambda_i = \mu_i = 0\,$ for all $i \geq 3$, $\mu_1 = 0$ and $\lambda_2 = -2 \mu_2$, leading to the basis of solutions $$\left\{1,\, 2y_1 = x_1 + x_2, \, 8y_1^2-4y_2^2 = x_1^2+6x_1x_2+x_2^2\right\}.$$
  With this, we have argued that the solution space of $\,\init_{(-e,e)}\big(\restr{I_m}{x\,\mapsto x+p}\big)\,$ is at most $3 \cdot 2^{m-2}$-dimensional. Together with~\cref{lem:solutionInitials}, this proves the claim.
\end{proof}

\begin{proof}[Proof of \cref{thm:singularLocus}]
  First, we observe that 
  \[\init_{(0,e)}(P_i) \,=\, x_i \prod_{j \neq i} (x_i - x_j) \xi_i^2\]
  and hence 
  \[\ch(I_m)^{\text{\rm red}} \,\subseteq \,\bigcap_{i=1}^m \Big(V(\xi_i) \cup V(x_i) \cup \bigcup_{j\neq i} V(x_i - x_j)\Big)\, \subseteq \,\pi^{-1}(\slocus) \cup V(\xi_1, \ldots, \xi_m),\]
  where $\,\pi \colon T^* \A^m \to \A^m\,$ denotes the natural projection. By definition of the singular locus, this proves the containment
  \[\Sing(W(I_m)) \, \subseteq \, \Sing(I_m)\, \subseteq\, \slocus.\]
  
  For the reverse inclusion, consider a point $\,p \in \C^m\,$ contained in exactly one irreducible component of $\slocus$.  By \cref{lem:solAroundZeroComp} and \cref{lem:solAroundDiagComp}, the space of formal power series solutions to $\, I_m\,$ (or, equivalently, to $W(I_m)$) around $\,p\,$ is of dimension strictly smaller than $2^m = \hrk(I_m) = \hrk(W(I_m))$. In particular, $p$ needs to be a singular point of $\, I_m\,$ and of $W(I_m)$, as otherwise the Cauchy--Kowalevski--Kashiwara Theorem implies the existence of $\,2^m\,$ linearly independent analytic solutions around~$p$. In particular, the singular loci of $\, I_m\,$ and of $\,W(I_m)\,$ must contain those points. Since singular loci are closed, we conclude that they contain the entire arrangement $\slocus$. 
\end{proof}

  \begin{remark}\label{remark parameters}
    The condition \eqref{eq:conditionOnC} on the parameter $\,c\,$ is very natural from the point of view of analytic functions, as the hypergeometric function $\,{_1F_{\!\!\;1}}(a;c)\,$ of a diagonal matrix argument is only defined under this condition.\footnote{Note however that our proof of \cref{thm:singularLocus} in this section relies only on the condition that $c \notin (m-1)/2 - \N$, which is slightly weaker than \eqref{eq:conditionOnC}.} 
    However, the Muirhead ideal itself is defined for arbitrary $\,a,c \in \C\,$ and is the more interesting object from the point of view of $D$-module theory.
  \end{remark}

The description of the singular locus in \cref{thm:singularLocus} gives rise to the following lower bound on the characteristic variety. In \cref{section charvar}, we will also discuss an upper bound and a conjectural description of the characteristic variety.

\begin{cor}\label{corollary lower bound char im}
  The characteristic variety of $\,W(I_m)\,$ contains the zero section and the conormal bundles of the irreducible components of $\slocus$, i.e.,
  \begin{align*}
  \ch(W(I_m)) \,\supseteq\, &V\left(\xi_1, \ldots, \xi_m\right) \, \cup \, \bigcup_i V(x_i, \xi_1, \ldots, \widehat{\xi_i}, \ldots, \xi_m) \\ &\quad \cup \bigcup_{i \neq j} V(x_i - x_j, \,\xi_i + \xi_j,\, \xi_1, \ldots, \widehat{\xi_i}, \ldots, \widehat{\xi_j}, \ldots, \xi_m).
  \end{align*}
\end{cor}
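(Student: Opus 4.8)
The plan is to combine the holonomicity of $W(I_m)$ with the structure theorem for characteristic varieties of holonomic $D_m$-ideals (\cref{theorem conormal}) and the explicit singular locus from \cref{thm:singularLocus}; the point is that holonomicity upgrades the \emph{set-level} equality $\Sing(W(I_m)) = \slocus$ into information about the individual irreducible components of $\ch(W(I_m))$. First I would dispose of the zero section: by \cref{corollary hntt} we have $\hrk(W(I_m)) = \hrk(I_m) = 2^m$ (the two ranks coincide because $R_m W(I_m) = R_m I_m$), and since the holonomic rank equals the multiplicity of $\ch(W(I_m))$ along the zero section $V(\xi_1, \ldots, \xi_m)$ (see \cite[Section~1.4]{SST00}), its positivity makes the zero section an irreducible component of $\ch(W(I_m))$.

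For the remaining components, recall that $W(I_m)$ is holonomic (\cref{corollary hntt}), so by \cref{theorem conormal} every irreducible component $Z$ of $\ch(W(I_m))$ is a conic Lagrangian, hence the conormal variety $N^*_{\overline{\pi(Z)}}$ of the closure of its image under the bundle projection $\pi \colon T^*\A^m \to \A^m$; in particular $\overline{\pi(Z)}$ is irreducible and closed. By \cref{definition holonomic rank} and \cref{thm:singularLocus}, $\slocus = \Sing(W(I_m)) = \bigcup_Z \overline{\pi(Z)}$, the union running over the components $Z$ of $\ch(W(I_m))$ distinct from the zero section. Since the irreducible components of a finite union of irreducible closed sets are exactly the maximal members of that collection, each irreducible component of the hyperplane arrangement $\slocus$ — that is, each $\{x_i = 0\}$ and each $\{x_i = x_j\}$ with $i \neq j$ — must coincide with $\overline{\pi(Z)}$ for at least one such $Z$, and for that $Z$ one then has $Z = N^*_{\overline{\pi(Z)}}$, the conormal bundle of the corresponding hyperplane. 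It remains to read these conormal bundles off in coordinates: the conormal direction along $\{x_i = 0\}$ is $dx_i$, which gives $V(x_i, \xi_1, \ldots, \widehat{\xi_i}, \ldots, \xi_m)$, while along $\{x_i = x_j\}$ it is $dx_i - dx_j$, which gives $V(x_i - x_j, \xi_i + \xi_j, \xi_1, \ldots, \widehat{\xi_i}, \ldots, \widehat{\xi_j}, \ldots, \xi_m)$. Collecting the zero section together with these two families yields exactly the asserted inclusion.

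I expect the only genuinely delicate point to be this passage from the set equality $\Sing(W(I_m)) = \slocus$ to a statement about individual components: one must be sure that (i) the holonomic rank really pins the zero section down as an honest component, not merely as a subset of one, and (ii) conormality (\cref{theorem conormal}) forces a component of $\ch(W(I_m))$ lying over a hyperplane $H \subseteq \slocus$ to be the \emph{full} conormal bundle of $H$ rather than a proper subvariety of it. Both are standard consequences of $D$-module theory; granting them, the identification of the conormal bundles of $\{x_i = 0\}$ and $\{x_i = x_j\}$ is a routine coordinate computation.
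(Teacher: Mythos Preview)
Your argument is correct and follows essentially the same route as the paper: identify the linear spaces on the right as conormal varieties, invoke \cref{theorem conormal} together with \cref{thm:singularLocus} to see that the conormals to the irreducible components of $\Sing(W(I_m))=\slocus$ occur in $\ch(W(I_m))$, and note that the zero section is always contained in the characteristic variety of a proper $D_m$-ideal. Your treatment of the zero section via the multiplicity interpretation of the holonomic rank is a bit more than is needed---the corollary only asserts containment, which holds for any proper ideal since every nonzero element of $W(I_m)$ has positive order---but it is not wrong, and your careful unpacking of why each hyperplane in $\slocus$ must equal $\overline{\pi(Z)}$ for some component $Z$ is a welcome elaboration of what the paper leaves implicit.
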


\begin{proof}
  As already noted in the introduction after \eqref{eq lower bound}, the linear spaces on the right hand side of the claimed inclusion are conormal varieties. By Theorem~\ref{theorem conormal}, the conormal varieties to the irreducible components of $\,\Sing(W(I_m))\,$ are contained in $\ch(W(I_m))$. Moreover, the zero section $\,V\left(\xi_1, \ldots, \xi_m\right)\,$ is always contained in the characteristic variety. Theorem~\ref{thm:singularLocus} concludes the proof.
\end{proof}

Above, we have studied bounds on solutions to the Muirhead system locally around points in $\,\C^m\,$ contained in exactly one component of $\slocus$, while the Cauchy--Kowalevski--Kashiwara Theorem describes the behavior around points in $\C^m \setminus \slocus$. A more detailed study around special points $\,p \in \slocus\,$ where several components of $\,\slocus\,$ intersect may be of interest.

We finish this section by looking at the most degenerate case: $p = 0$. Recall from \cref{theorem muirhead} that $\,{}_1F_{\!\!\;1}\,$ is the unique analytic solution to $\, I_m\,$ around $\,0\,$ that is symmetric and normalized to attain the value $\,1\,$ at the origin. In fact, the restricting factor assuring uniqueness here is not the symmetry, but the analyticity around~$0$. Namely, using the techniques presented before, we arrive at the following refinement of \cref{theorem muirhead}:

\begin{prop} \label{prop:uniqueSolAroundOrigin}
  Let $m \in \N _{>0}$ and let $a,c \in \C$ be parameters with $c$ satisfying~\eqref{eq:conditionOnC}. Then $\,{}_1F_{\!\!\;1}(a;c)\,$ is the unique formal power series solution to $\, I_m\,$ around $\,0\,$ with ${}_1F_{\!\!\;1}(a;c)(0) = 1$. In particular, ${}_1F_{\!\!\;1}(a;c)$ is the unique convergent power series solution to $\,I_m\,$ around $\,0\,$ with \mbox{${}_1F_{\!\!\;1}(a;c)(0) = 1$}.
\end{prop}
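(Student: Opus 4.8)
The plan is to control the space $\Sol_{\C\llbracket x\rrbracket}(I_m)$ of formal power series solutions of $I_m$ centred at the origin. By \cref{theorem muirhead} this space contains the non-zero function ${}_1F_{\!\!\;1}(a;c)$, so it is enough to show $\dim\Sol_{\C\llbracket x\rrbracket}(I_m)\leq 1$. Applying \cref{lem:solutionInitials} with $u=e=(1,\dots,1)\in\R_{\geq 0}^m$, and using that $\init_{(-e,e)}(I_m)$ contains the initial forms of the generators $P_1,\dots,P_m$, yields
\[
\dim\Sol_{\C\llbracket x\rrbracket}(I_m)\,\leq\,\dim\Sol_{\C\llbracket x\rrbracket}\bigl(\init_{(-e,e)}(P_1),\dots,\init_{(-e,e)}(P_m)\bigr).
\]
Setting $h_k\coloneqq x_k\partial_k^2+c\partial_k+\tfrac12\sum_{\ell\neq k}\tfrac{x_\ell}{x_k-x_\ell}(\partial_k-\partial_\ell)$, so that $g_k=h_k-(x_k\partial_k+a)$, a short inspection of $w$-degrees for $w=(-e,e)$ shows $\init_{(-e,e)}(P_k)=\prod_{\ell\neq k}(x_k-x_\ell)\cdot h_k$, a polynomial operator; since $\C\llbracket x\rrbracket$ is a domain, the space on the right equals $\Sol_{\C\llbracket x\rrbracket}(h_1,\dots,h_m)$. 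Thus everything reduces to showing that, under \eqref{eq:conditionOnC}, the only formal power series solutions of $h_1=\dots=h_m=0$ are the constants.

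To prove this I would note that each operator $\init_{(-e,e)}(P_k)$ shifts total degree by the fixed amount $m-2$, so the common solution space in $\C\llbracket x\rrbracket$ decomposes into homogeneous pieces; hence it suffices to rule out non-zero homogeneous polynomial solutions $f$ of degree $d\geq 1$. I would argue by induction on $m$. For $m=1$ the single equation $x_1f''+cf'=0$ forces $d(d-1+c)=0$, impossible for $d\geq 1$ since $c\notin-\N$. For the inductive step, restricting the identities $h_k\bullet f=0$ (for $k\neq j$) to the hyperplane $\{x_j=0\}$ makes the $x_j$-terms of these operators drop out, so $f|_{x_j=0}$ is a homogeneous degree-$d$ solution of the $(m-1)$-variable system and hence vanishes by the inductive hypothesis (which applies, as \eqref{eq:conditionOnC} for $m$ implies the corresponding condition for $m-1$). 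Therefore $x_j\mid f$ for every $j$, i.e.\ $\Pi\mid f$ with $\Pi\coloneqq x_1\cdots x_m$.

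The heart of the argument is then the conjugation identity
\[
\Pi^{-1}\,h_k^{[b]}\,\Pi\;=\;h_k^{[b+2]}\,+\,\frac1{x_k}\Bigl(b-\tfrac{m-1}{2}\Bigr),
\]
valid for every value $b$, where $h_k^{[b]}$ denotes $h_k$ with $c$ replaced by $b$. Iterating it: whenever $f=\Pi^Ng_N$ with $g_N\in\C[x]$ and $N\geq 1$, the polynomial $g_N$ satisfies $\bigl(h_k^{[c+2N]}+\gamma_N x_k^{-1}\bigr)\bullet g_N=0$ with $\gamma_N=N\bigl(c+N-1-\tfrac{m-1}{2}\bigr)$, and \eqref{eq:conditionOnC} excludes $c=\tfrac{m-2N+1}{2}$ for every $N\geq 1$, so $\gamma_N\neq 0$. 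On the other hand $h_k^{[c+2N]}\bullet g_N$ is a rational function whose poles lie only along the hyperplanes $\{x_k=x_\ell\}$, hence is regular along $\{x_k=0\}$, so the relation $\gamma_N x_k^{-1}g_N=-h_k^{[c+2N]}\bullet g_N$ forces $x_k\mid g_N$; running over all $k$ gives $\Pi\mid g_N$, i.e.\ $\Pi^{N+1}\mid f$. Together with $\Pi\mid f$ from the previous step, induction on $N$ yields $\Pi^N\mid f$ for all $N$, whence $f=0$. This proves the claim, so $\dim\Sol_{\C\llbracket x\rrbracket}(I_m)=1$; the ``in particular'' then follows because ${}_1F_{\!\!\;1}(a;c)$ is a convergent (indeed entire) solution with value $1$ at $0$, and every convergent solution is a formal one.

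I expect the main obstacle to be discovering and verifying the conjugation identity, together with the observation that its obstruction $\gamma_N$ vanishes exactly on the set excluded by \eqref{eq:conditionOnC}; once these are in hand, the reductions via \cref{lem:solutionInitials} and the divisibility bootstrap are routine. A secondary technicality is that the operators $h_k$ have rational coefficients, so the ``restriction to $\{x_j=0\}$'' and ``regularity along $\{x_k=0\}$'' steps must be justified by noting that the pole divisor of $h_k\bullet(\cdot)$ is contained in $\bigcup_{\ell\neq k}\{x_k=x_\ell\}$.
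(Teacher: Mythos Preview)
Your argument is correct, but it takes a different and more laborious route than the paper's. The paper applies \cref{lem:solutionInitials} with a weight vector $u$ having \emph{strictly increasing} entries $0<u_1<\cdots<u_m$, rather than your choice $u=e$. With that choice the initial forms become (up to monomial factors) operators $Q_i=2\theta_i^2+(2c-i-1)\theta_i-\sum_{j>i}\theta_j$ lying in $\C[\theta_1,\dots,\theta_m]$, which act diagonally on monomials; the uniqueness then reduces to checking that the only monomial $x^\alpha$ annihilated by all $Q_i$ is $1$, a two-line computation using \eqref{eq:conditionOnC}. Your choice $u=e$ keeps the cross-terms of the Muirhead operators in the initial form, leading to the non-diagonal system $h_1=\cdots=h_m=0$, which you then handle by a two-stage argument: an induction on $m$ via restriction to $\{x_j=0\}$ to obtain $\Pi\mid f$, followed by the conjugation identity $\Pi^{-1}h_k^{[b]}\Pi=h_k^{[b+2]}+x_k^{-1}(b-\tfrac{m-1}{2})$ to bootstrap divisibility by $\Pi^N$. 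Both steps are correctly justified (the pole analysis and the recursion for $\gamma_N=N(c+N-1-\tfrac{m-1}{2})$ check out), and the combined conditions from the induction on $m$ and the non-vanishing of $\gamma_N$ exactly recover~\eqref{eq:conditionOnC}. What the paper's approach buys is brevity: a clever weight eliminates the induction on $m$ and the conjugation trick entirely. What your approach buys is a pleasant structural insight---the shift $c\mapsto c+2$ under conjugation by $\Pi$---that is invisible in the diagonal picture.
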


\begin{proof}
  Consider any weight vector $\,u \in \R_{\geq 0}^m\,$ with $0 < u_1 < u_2 < \cdots < u_m$. From the definition of $P_1, \ldots, P_m$, we see that for all $i \in \{1,\ldots,m\}$:
  \[
  \init_{(-u,u)}(P_i) \,=\, \frac{(-1)^{i-1}}{2} x_1 \cdots x_{i-1} \cdot x_i^{m-i-1} \cdot \left(2\theta_i^2 + (2c-i-1)\theta_i-\sum_{j=i+1}^m \theta_j\right),
  \]
  where $\theta_i \coloneqq  x_i \partial_i$.
  In particular, the Weyl closure of $\,\init_{(-u,u)}(I)\,$ contains the operators 
  $Q_i \coloneqq  2\theta_i^2 + (2c-i-1)\theta_i-\sum_{j=i+1}^m \theta_j$. 
  The action of operators in $\,\C[\theta_1, \ldots, \theta_m] \subseteq D_m\,$ on $\,\C\llbracket x \rrbracket\,$ diagonalizes with respect to the basis of $\,\C\llbracket x \rrbracket\,$ given by the monomials. 
  In particular, $\Sol_{\C\llbracket x \rrbracket}(Q_1, \ldots, Q_m)$ is a subspace of $\,\Sol_{\C\llbracket x \rrbracket}(\init_{(-u,u)}(I))\,$ spanned by monomials. Therefore, by \cref{lem:solutionInitials}, it suffices to show that the only monomial annihilated by $\,Q_1, \ldots, Q_m\,$ is $1$.
  
  Let $\,\alpha \in \N^m\,$ be such that $\,x^\alpha\,$ is annihilated by $Q_1, \ldots, Q_m$. Assume for contradiction that $\,\alpha \neq 0\,$ and let $\,i \in \{1,\ldots,m\}\,$ be maximal such that $\alpha_i \neq 0$. Then 
  \[0 \,=\, Q_i \bullet x^\alpha \,=\, 2\alpha_i^2 + (2c-i-1)\alpha_i-\sum_{j=i+1}^m \alpha_j \,=\, \alpha_i\cdot(2\alpha_i + 2c-i-1).\]
  Note that $\,c \notin \{\frac{k}{2} \mid k \in \Z,\, k \leq m-1\}\,$ guarantees $\,2\ell + 2c-i-1 \neq 0\,$ for all positive integers~$\ell$. This contradicts the assumption $\alpha_i \neq 0$. We conclude that 
  \[\Sol_{\C\llbracket x \rrbracket}(\init_{(-u,u)}(I)) \, \subseteq \, \Sol_{\C\llbracket x \rrbracket}(Q_1, \ldots, Q_m) \,=\, \C \cdot \{1\}\]
  and therefore $\dim \Sol_{\C\llbracket x \rrbracket}(I_m) \leq 1$.
  The last claim is now immediate.
\end{proof}

\section{Characteristic variety of the Muirhead ideal}\label{section charvar}
In this section, we give a conjectural description of the (reduced) characteristic variety of the Weyl closure of the Muirhead ideal $I_m $, see Conjecture~\ref{conjecture charVar}. The conjecture based on our computations and further evidence is provided by the partial results obtained in Corollary~\ref{corollary lower bound char im} and Proposition~\ref{proposition upper bound}. The description of~$\, \ch\left(W(I_m)\right)\,$ is combinatorial in nature and would imply that the number of irreducible components is given by the $(m+1)$-st Bell number $B_{m+1}$.

\subsection{Conjectural structure of the characteristic variety}
Let us first explain some notations. 

\begin{notation}\label{notation partitions}
We denote $[m]=\{1,\ldots,m\}$. We consider partitions of this set $[m]=J_0 \sqcup J_1 \sqcup \ldots \sqcup J_k$, where $\,J_0\,$ is allowed to be empty, the $\,J_i\,$ with $\,i\neq 0\,$ are nonempty, and we consider the $\,J_1, \ldots, J_k\,$ as unordered. Taking into account that $\, J_0\,$ plays a distinguished role, we denote such a partition by $J_0 \mid J_1\ldots J_k$.
\end{notation}

For a partition $ [m] = J_0 \mid J_1  \dots J_k$, we denote by $\,C_{J_0|J_1\ldots J_k}\,$ the $m$-dimensional linear subspace
\begin{align}\label{eq subspace c}
V\: \bigg( \{ x_j \mid j \in J_0 \} \cup \Big\{\sum_{i \in J_\ell} \xi_i \bigmid \ell \,= \, 1,\ldots,k\Big\}
 \cup \bigcup_{\ell=1}^k \{x_i-x_j \mid i,j \in J_\ell\}\bigg)
\end{align}
of $T^*\A^m = \A^{2m} = \Spec \C[x_1,\ldots,x_m,\xi_1, \ldots,\xi_m]$.

Let $\,B_k \in \N \,$ denote the $k$-th \emph{Bell number}, i.e., the number of partitions of a set of size $k$. For example $B_1=1$, $B_2=2$, $B_3=5$, $B_4=15$, $B_5=52$, and so on. 
For the Muirhead ideal $\, I_m$, the characteristic variety of its Weyl closure $\,W(I_m)\,$ has the following conjectural description.

\begin{conj} \label{conjecture charVar}
 The (reduced) characteristic variety of $\,W(I_m )\,$ is the following arrangement of $m$-dimensional linear spaces:
 \[
\ch(W(I_m))^{\text{\rm red}} \,= \, \bigcup_{[m] \,= \, J_0 \sqcup \dots \sqcup J_k} C_{J_0|J_1\ldots J_k}.
\]
In particular, $\ch(W(I_m))$ has $\,B_{m+1}\,$ many irreducible components.
\end{conj}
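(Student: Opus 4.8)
The plan is to prove the two inclusions of the asserted equality separately, using that $W(I_m)$ is holonomic (Corollary~\ref{corollary hntt}) together with the bounds already established. It helps to reformulate the target: the subspaces $Z_{J_0|J_1\ldots J_k}$ are exactly the flats of the arrangement $\slocus$ — its intersection lattice is the partition lattice of the $(m+1)$-element set $\{0,1,\ldots,m\}$, the block containing $0$ recording the coordinates forced to vanish — so the conjecture asserts that $\ch(W(I_m))^{\mathrm{red}}$ is the union of the conormal varieties to all flats of $\slocus$. The number of flats is $B_{m+1}$; equivalently $B_{m+1}=\sum_{S\subseteq[m]}B_{|[m]\setminus S|}$ (choose $J_0=S$, then partition the complement), the standard Bell recurrence.

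\emph{The inclusion ``$\subseteq$''.} Since $W(I_m)$ is holonomic, Theorem~\ref{theorem conormal} forces every irreducible component $Z$ of $\ch(W(I_m))$ to be a conormal variety, hence $\C^*$-invariant, Lagrangian and $m$-dimensional, so that $Z=N^*\overline{\pi(Z)}$ for an irreducible $\overline{\pi(Z)}\subseteq\A^m$. By Proposition~\ref{proposition upper bound} and $\ch(W(I_m))\subseteq\ch(I_m)$, such a $Z$ lies in some $\widehat C_{J_0|J_1\ldots J_k}$. When all blocks $J_\ell$ with $\ell\ge1$ have at most two elements one has $\widehat C_{J_0|J_1\ldots J_k}=C_{J_0|J_1\ldots J_k}$, which is already irreducible of dimension $m$, whence $Z=C_{J_0|J_1\ldots J_k}$; this reduces the claim to the partitions with a block of size $\ge3$. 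For those I would enlarge the operator supply: from $R_mI_m=(g_1,\ldots,g_m)$ one forms $R_m$-combinations such as $x_jg_i-x_ig_j$ for $i,j$ in a common block and clears denominators, landing in $W(I_m)=R_mI_m\cap D_m$; the new principal symbols should involve the ``missing'' forms $\xi_i+\xi_j$ and $x_i-x_j$ and cut $\widehat C_{J_0|J_1\ldots J_k}$ down to the union of the $C_{J_0'|J_1'\ldots}$ over partitions refining $J_0|J_1\ldots J_k$ — the precise refinement relation being dictated by the explicit description~\eqref{eq subspace chat} of $\widehat C$. A more structural variant, closer in spirit to the appendix, is to restrict the Muirhead system to a general normal slice of the flat $Z_{J_0|J_1\ldots J_k}$ and to induct on $m$, showing that the restriction module again has characteristic variety supported on conormals of flats; this uses the behaviour of characteristic varieties under restriction to the singular strata of $\slocus$.

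\emph{The inclusion ``$\supseteq$''.} Corollary~\ref{corollary lower bound char im} already yields the $C_{J_0|J_1\ldots J_k}$ for which $J_0$ is empty or a singleton and all other blocks are singletons, apart from at most one of size $2$. To obtain the remaining components I would exhibit, near a general point $p$ of each flat $Z_{J_0|J_1\ldots J_k}$, a solution of $W(I_m)$ whose singular support contains $N^*Z_{J_0|J_1\ldots J_k}=C_{J_0|J_1\ldots J_k}$. The candidate is a product along the stratum of: a ``second solution'' $x_j^{\,1-c}\,{}_1F_1(a-c+1;2-c)(x_j)$ for each $j\in J_0$; confluent hypergeometric behaviour in the block-sum variables $\sum_{i\in J_\ell}x_i$; and polynomial, hence locally constant, behaviour in the within-block differences $x_i-x_j$ with $i,j\in J_\ell$, as in the explicit basis $\{1,\,x_1+x_2,\,x_1^2+6x_1x_2+x_2^2\}$ from the proof of \cref{lem:solAroundDiagComp}. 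Annihilation by $I_m$ is checked using initial-form identities for the $P_i$ of the type appearing in~\eqref{eq:shiftedInitialsZeroComp} and in the proof of \cref{lem:solAroundDiagComp}. An alternative, perhaps cleaner, route is to read off the characteristic cycle from the jumps in the dimension of the local (formal) solution space via Kashiwara's index theorem: this dimension is $2^m$ on $\C^m\setminus\slocus$, is computed for the codimension-one strata in \cref{lem:solAroundZeroComp} and \cref{lem:solAroundDiagComp}, and equals $1$ at the origin by \cref{prop:uniqueSolAroundOrigin}; performing the analogous computation at a general point of each flat and inverting the index formula would pin down exactly which conormals occur and with which multiplicity.

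\emph{Main obstacle.} The upper bound is the crux: generators of $W(I_m)$ are not known, and the operators needed to shrink $\widehat C_{J_0|J_1\ldots J_k}$ to $C_{J_0|J_1\ldots J_k}$ when a block has three or more indices are precisely those the paper could not produce; making the transverse-slice/restriction induction rigorous — in particular controlling characteristic varieties and multiplicities under restriction to the singular strata of $\slocus$ — is where the real difficulty lies. The lower bound should, by contrast, be a long but essentially routine extension of the solution analysis of Section~\ref{section solutions muirhead}.
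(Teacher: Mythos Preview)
The statement you are attempting to prove is a \emph{conjecture} in the paper; the paper does not supply a proof, only partial evidence (the lower bound of Corollary~\ref{corollary lower bound char im}, the upper bound of Proposition~\ref{proposition upper bound}, and computations for $m\le 3$ in Section~\ref{section small m}). So there is no ``paper's own proof'' to compare against, and your proposal should be read as an outline of a possible attack on an open problem.

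That said, your outline contains genuine gaps beyond the one you already flag. For the inclusion ``$\subseteq$'' you correctly isolate the difficulty: when some block $J_\ell$ has $|J_\ell|\ge 3$, one must produce operators in $W(I_m)$ whose symbols cut $\widehat C_{J_0|J_1\ldots J_k}$ down to the conormal $C_{J_0|J_1\ldots J_k}$. Your suggestion to form combinations like $x_jg_i-x_ig_j$ and clear denominators is not enough: these are order-$2$ operators, and the paper reports (Example~\ref{example m3}) that already for $m=3$ the operator needed to exclude the spurious cotangent fibers over the diagonal has order $4$ and requires order-$6$ coefficients to witness membership in $I_3$. There is no reason to expect a simple closed form in general, and the paper lists this as an open problem (Section~\ref{section open problems}).

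Your lower-bound argument also has a gap you do not acknowledge. The proposed ``product along the stratum'' of one-variable Kummer solutions $x_j^{1-c}{}_1F_1(a-c+1;2-c)(x_j)$ is \emph{not} a solution of the Muirhead system: the operators $g_k$ in \eqref{ann1F1re} contain the coupling terms $\frac{x_\ell}{x_k-x_\ell}(\partial_k-\partial_\ell)$, which do not annihilate products of functions in separate variables. Thus the candidate solutions you describe do not exist in that form, and neither their singular support nor the index-theoretic count you sketch can be read off from them. The dimension counts in Lemmas~\ref{lem:solAroundZeroComp} and~\ref{lem:solAroundDiagComp} are upper bounds on local solution spaces, not constructions of solutions with prescribed microlocal behaviour; extending them to arbitrary flats and then inverting an index formula would require substantially more than a ``long but routine'' computation.
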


As $\,I_4\,$ is not holonomic, it does not seem reasonable to make predictions about $\ch(I_m)$. The better object to study is its Weyl closure, which is challenging to compute. The appearance of the Bell numbers in the conjecture is explained by the following observation:

We have a bijection of sets
\begin{align}\label{partitions}
\begin{split}
&\bigcup_{k=1}^m \, \big\{ \text{Ordered partitions } \{ 0,1,\ldots,m\}  \,= \, \tilde{J}_1 \sqcup \ldots \sqcup \tilde{J}_k \big\} / \mathfrak{S}_k \\ 
\stackrel{1:1}{\rightleftarrows} \,\,  &\bigcup_{k=0}^m \, \big\{ \text{Ordered partitions } \{ 1,\ldots ,m\}  
\,= \, J_0 \sqcup J_1 \sqcup \ldots \sqcup J_k \big\} / \mathfrak{S}_k ,
\end{split}
\end{align} 
defined by $\,J_0 \coloneqq \tilde{J}_i \setminus \{0\} \, $ for $ 0\in \tilde{J}_i$, 
where on the right hand side of~\eqref{partitions}, the symmetric group $\,\mathfrak{S}_k\,$ acts on $J_1\sqcup \ldots \sqcup J_k$. It is important to note that $\,J_0\,$ is allowed to be empty, and $\, J_0\,$ is the only set among the $\,J_i\,$ and $\,\tilde{J}_j\,$ with this property. 

\subsection{Bounds for the characteristic variety}
Next, we give an upper bound for the reduced characteristic variety $\,\ch(I_m)^{\rm red}\,$ and hence a fortiori an upper bound for $\ch(W(I_m))^\red$. 
By \emph{upper bound}, we mean a variety containing the given variety. 
Note that we already proved a \emph{lower bound} for $\,\ch(W(I_m))\,$ in \cref{corollary lower bound char im}.

For a partition $\,J_0 \mid J_1\ldots J_k\,$ of $[m]$, we defined the linear subspace 
$\, C_{J_0|J_1\dots J_k}\,$ of $\A^{2m}\,$ in~\eqref{eq subspace c}. 
We denote by $\, \widehat C_{J_0|J_1\dots J_k} \, $  the linear space

\begin{align}\label{eq subspace chat}
V\bigg(\{x_j \mid j \in J_0\} \cup \bigcup_{\ell=1}^k \{x_i-x_j \mid i,j \in J_\ell\} 
\cup \Big\{\sum_{i \in J_\ell} \xi_i \mid \ell = 1,\ldots,k \text{ s.t.\ } |J_\ell| \leq 2\Big\}\bigg) 
\end{align}
of $\A^{2m}$.
Clearly, $\widehat C_{J_0|J_1\dots J_k}  \supseteq C_{J_0|J_1\dots J_k}$, 
with equality if and only if $ \,|J_\ell| \leq 2\,$ for \linebreak\mbox{$\ell = 1, \ldots, k$}.
Further evidence for Conjecture~\ref{conjecture charVar} is given by the following result.

\begin{prop}\label{proposition upper bound}
The (reduced) characteristic variety of $\, I_m \,$ is contained in the arrangement of the linear spaces $\widehat C_{J_0|J_1 \dots J_k}$:
  \[\ch(I_m)^{\text{\rm red}} \, \subseteq \, \bigcup_{[m] \,= \, J_0 \sqcup J_1 \sqcup \dots \sqcup J_k} \widehat C_{J_0|J_1 \ldots J_k}.\]
In particular, this also gives an upper bound for $\ch(W(I_m))^{\text{\rm red}}$.
\end{prop}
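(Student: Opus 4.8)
The plan is to produce finitely many explicit operators in $I_m$ whose principal symbols already cut out the asserted arrangement, and then to locate an arbitrary point of $\ch(I_m)^{\red}$ inside one of the $\widehat C_{J_0|J_1\ldots J_k}$ by inspecting its coordinates. Since $P_1,\ldots,P_m\in I_m$, the initial ideal $\init_{(0,e)}(I_m)$ contains the symbols $\init_{(0,e)}(P_i)=x_i\prod_{\ell\neq i}(x_i-x_\ell)\,\xi_i^2$, so
\[
\ch(I_m)^{\red}\;\subseteq\;\bigcap_{i=1}^m\Big(V(x_i)\,\cup\,\bigcup_{\ell\neq i}V(x_i-x_\ell)\,\cup\,V(\xi_i)\Big).
\]
Given $(x,\xi)$ in the reduced characteristic variety, I would set $J_0\coloneqq\{i : x_i=0\}$ and partition the remaining indices into blocks $J_1,\ldots,J_k$ by $i\sim j\iff x_i=x_j$. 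For this partition the defining equations of $\widehat C_{J_0|J_1\ldots J_k}$ that only involve the $x$-variables hold by construction, and the displayed containment forces $\xi_i=0$ whenever $\{i\}$ is a singleton block, since then $x_i\prod_{\ell\neq i}(x_i-x_\ell)\neq0$. It remains to produce the relations $\xi_i+\xi_j=0$ attached to the blocks $\{i,j\}$ of size two; these are invisible to the symbols of the $P_i$ alone.

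To get them, fix a pair $i\neq j$, write $A_i\coloneqq\prod_{\ell\neq i,j}(x_i-x_\ell)$ and $A_j\coloneqq\prod_{\ell\neq i,j}(x_j-x_\ell)$, and consider
\[
R_{ij}\;\coloneqq\; x_jA_j\,\partial_j^2\,P_i\;+\;x_iA_i\,\partial_i^2\,P_j\;\in\;I_m,
\]
which lies in $I_m$ as a left $D_m$-combination of $P_i,P_j$. Using $\prod_{\ell\neq i}(x_i-x_\ell)=(x_i-x_j)A_i$, the order-$4$ symbols of the two summands are $x_ix_j(x_i-x_j)A_iA_j\,\xi_i^2\xi_j^2$ and its negative, so $R_{ij}$ has order at most $3$. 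Computing $[\partial_j^2,P_i]$ and $[\partial_i^2,P_j]$ and keeping track of which coefficients of $P_i,P_j$ are divisible by $x_i-x_j$, I expect to obtain that $\init_{(0,e)}(R_{ij})$ restricted to the diagonal $\{x_i=x_j\}$ equals $-\tfrac12\,x_j^2A_j^2\,(\xi_i+\xi_j)^3$, all remaining contributions (the ones carrying a $\xi_\ell$ with $\ell\neq i,j$) dropping out because their coefficients carry a factor $x_i-x_j$. In particular this symbol is nonzero, so $R_{ij}$ has order exactly $3$ and $\ch(I_m)^{\red}\subseteq V(\init_{(0,e)}(R_{ij}))$.

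Finally, if $\{i,j\}$ is a size-two block of our point $(x,\xi)$, then $x_i=x_j\neq0$ and $x_i\neq x_\ell$ for all $\ell\neq i,j$, so $x_j^2A_j^2$ does not vanish there; since $\init_{(0,e)}(R_{ij})$ vanishes on $(x,\xi)$, we conclude $(\xi_i+\xi_j)^3=0$, i.e.\ $\xi_i+\xi_j=0$. This supplies the last family of equations, hence $(x,\xi)\in\widehat C_{J_0|J_1\ldots J_k}$, proving the containment; the assertion for $\ch(W(I_m))^{\red}$ follows at once from $I_m\subseteq W(I_m)$. The step I expect to be the real work is the middle one: the $R_{ij}$ must be chosen so that the leading symbols cancel \emph{without} the order-$3$ symbol picking up a spurious factor $x_i-x_j$ (this is what dictates the multipliers $x_jA_j\partial_j^2$ and $x_iA_i\partial_i^2$), and one then has to carry out the commutator bookkeeping that pins the restriction of $\init_{(0,e)}(R_{ij})$ to the diagonal down to a nonzero scalar multiple of $(\xi_i+\xi_j)^3$.
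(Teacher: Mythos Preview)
Your proposal is correct and essentially identical to the paper's proof: your operators $R_{ij}$ are exactly the paper's $S_{ij}$, and the paper asserts the same symbol formula $\init_{(0,e)}(S_{ij}) = -\tfrac{1}{2}\,x_i x_j A_i A_j\,(\xi_i+\xi_j)^3 + (x_i-x_j)Q_{ij}$ (also without writing out the commutator bookkeeping), followed by the same point-by-point assignment of a partition and verification of the defining equations of $\widehat C_{J_0|J_1\ldots J_k}$.
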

\begin{proof}
  The characteristic variety of $\,I_m\,$ is defined by the vanishing of the symbols $\,\init_{(0,e)}(P) \in \C[x][\xi]\,$ of all operators $P \in I_m$. 
  Hence, describing explicit symbols in $\,\init_{(0,e)}(I_m)\,$ bounds $\,\ch(I_m)\,$ from above. We observe that 
    \begin{align*} 
    \init_{(0,e)}(P_i) \,= \, x_i \cdot \left( \prod_{j \neq i} (x_i - x_j)\right) \cdot \xi_i^2 \qquad \text{for } \,i \,= \,1,\ldots,m.
    \end{align*}
  Moreover, for $i \neq j$, consider the following operators in $I_m$:
    \[S_{ij} \, \coloneqq \, x_j\cdot \left( \prod_{k \neq i,j} (x_j-x_k) \right) \cdot \partial_j^2 \cdot P_i \,+\, x_i \cdot \left( \prod_{k \neq i,j} (x_i-x_k) \right) \cdot \partial_i^2 \cdot P_j.\]
  This expression can be seen as the $S$-pair of the operators $P_i$ and $P_j$ for graded term orders on $R_m$. A straightforward computation by hand reveals that 
   \begin{align*}
     \init_{(0,e)}(S_{ij})
     \,=\, -\frac{1}{2} x_i x_j \Big( \prod_{k \neq i,j} (x_i-x_k)(x_j-x_k)\Big)
     \big(\xi_i+\xi_j\big)^3 + (x_i-x_j) Q_{ij}
   \end{align*}
    for some $Q_{ij} \in \C[x][\xi]$.
  
  Since these operators lie in the Muirhead ideal, we have
  \[
	\ch\left(I_m\right) \, \subseteq \, V\,\left(\init_{(0,e)}(P_i),\, \init_{(0,e)}(S_{ij}) \mid  i\neq j\right) \, \eqqcolon \, Z,
	\]
  so it suffices to see that $\,Z\,$ is set-theoretically contained in the union of all 
  $\,\widehat C_{J_0|J_1\dots J_k}$.
  We prove this by the comparing their fibers over $\A^m = \Spec \C[x_1,\ldots,x_m]$.
  Let $\, z = (z_1,\ldots,z_m) \in \A^m\,$ and let  
  $\, [m]= J_0 \sqcup J_1 \sqcup \dots \sqcup J_k\,$ 
  be a partition of $\,[m]\,$ such that
\begin{align}\label{special partition}
z_i =0 \, \iff \,i \in J_0 \qquad \text{and} \qquad z_i = z_j \,\iff\, \exists \ell: i,j \in J_\ell.
\end{align}
  Note that this partition is uniquely determined by the point $\,z\,$ up to permuting $J_1, \ldots, J_k$. 
  Let $\,F\,$ denote the fiber of $\,Z\,$ over the point $z$. We claim that $\,F\,$ is set-theoretically contained in the fiber of 
  $\, \widehat C_{J_0|J_1 \dots J_k}\,$ over $z$. 
  
  To prove this claim, it suffices to see that for all singletons $\,J_{\ell} = \{n\}\,$ and two-element sets $\,J_{\ell'}= \{i,j\}\,$ in our partition,
  where $1\leq \ell,\,\ell'\leq k$, the polynomials $\, \xi_n^2\,$ and $\, (\xi_i+\xi_j)^3\,$ vanish on $F$. But for those $n,i,j,$ the polynomial
\begin{equation} \label{eq:symbolOfGenerator}
\restr{\init_{(0,e)}(P_n)\,}{\mathbf{x}\,=\,z} \,=\, z_n \cdot \left( \prod_{j \neq n} (z_n - z_j)\right) \cdot \xi_n^2
\end{equation}
  is a non-zero multiple of $\,\xi_n^2\,$ by \eqref{special partition},  since $\,J_{\ell}\,$ is a singleton, and
\begin{equation} \label{eq:symbolOfSPair}
\restr{\init_{(0,e)}(S_{ij})\,}{\mathbf{x}=z} \,= \, -\frac{1}{2} z_i z_j \cdot \prod_{p \neq i,j} (z_i-z_p)(z_j-z_p) (\xi_i+\xi_j)^3
\end{equation}
is a non-zero multiple of $(\xi_i+\xi_j)^3$. Here, we have used that $\,z_i=z_j\,$ by construction of the partition $J_0|J_1 \dots J_k$.

Both \eqref{eq:symbolOfGenerator} and \eqref{eq:symbolOfSPair} vanish on $\,F\,$ by the definition of $Z$, and hence $\,\xi_n\,$ and $\,\xi_i+\xi_j\,$ vanish on the set $F^{\text{\rm red}}$, disregarding the scheme structure. 
This shows that $F^{\rm red}\subseteq  \widehat C_{J_0|J_1 \dots J_k}$. In particular, 
 \[\ch(I)^{\rm red} \, \subseteq \, Z^{\rm red} \, \subseteq \, \bigcup_{[m] \,= \, J_0 \sqcup J_1 \sqcup \dots \sqcup J_k} \widehat C_{J_0|J_1\ldots J_k},\]
 concluding the proof.
\end{proof}

\subsection{Examples}\label{section small m}
The computational difficulty of questions concerning the characteristic variety $\ch(I_m)$, the Weyl closure $W(I_m)$, its characteristic variety, irreducible components, and more increases rapidly with the number of variables $m$. For $m=2,3,$ we succeed with straightforward computations in {\tt Singular} to obtain the characteristic variety and its decomposition into irreducible components. For $m=2$, also the Weyl closure $\,W(I_m)\,$ is computable, but already for $m=3$ this is no longer feasible. For $m=4$, none of the computer calculations terminate. We provide more precise information in the following examples.

\begin{example}\label{example m2}
We consider the case $m=2$. We perform our computations for generic $a,c$, i.e., in 
\[
\mathbb{Q}(a,c)[x_1,\ldots,x_m]\langle \partial_1,\ldots,\partial_m \rangle
\]
with indeterminates $a,c$. Computations in {\tt Singular} show that the characteristic variety $\,\ch\left( I_2\right)\,$ set-theoretically decomposes into the following five irreducible components 
\begin{equation}\label{eq charvar m2}
\begin{aligned} 
 V\left(x_1,x_2\right)\, \cup \,V\left( x_1,\xi_2 \right)\,\cup\, V\left(\xi_1,x_2 \right)   
\,\cup\, V\left(\xi_1,\xi_2\right)\, \cup \, V \left(\xi_1+\xi_2,\,x_1-x_2 \right).
\end{aligned}
\end{equation}
Already for $m=2$, the ideal $\, I_m\,$ and its Weyl closure $\,W(I_m)\,$ differ. The operator
\[ 
P\, =\, g_1-g_2\,=\,(x_1\partial_1^2-x_2\partial_2^2)-(x_1\partial_1 - x_2 \partial_2) + (c-\frac{1}{2}) (\partial_1-\partial_2)
\]
is clearly in $W(I_2)\setminus I_2$. In fact, $W(I_2)=I_2+(P)$. Moreover, $ \ch(I_2)^\red =\ch(W(I_2))^\red\,$ but the multiplicities of the irreducible components are different. In the order of appearance in~\eqref{eq charvar m2}, the irreducible components have multiplicities $\,4,2,2,4,3\,$ in $\,I_2\,$ and $\,3,2,2,4,1\,$ in $W(I_2)$.

The decomposition~\eqref{eq charvar m2} will also turn out to be a byproduct of our more general result presented in Proposition~\ref{proposition upper bound}.
\end{example}

\begin{example}\label{example m3}
Next we consider the case $m=3$. Computations for generic $a,c$ in {\tt Singular}  show that $\,\ch\left( I_3\right)\,$ 
decomposes into the $\,15=B_4\,$ irreducible components
\begin{align*} 
&V(x_1,x_2,x_3) \ \cup \  V(\xi_1, x_2, x_3) \ \cup \  V(x_1, \xi_2, x_3) \ \cup \  V(x_1, x_2, \xi_3) \\
&\ \ \cup \  V(\xi_1, \xi_2, x_3) \ \cup \  V(\xi_1, x_2, \xi_3) \ \cup \  V(x_1, \xi_2, \xi_3) \ \cup \  V(\xi_1, \xi_2, \xi_3) \\
&\ \ \cup \  V(x_1-x_2,\, \xi_1+\xi_2,\, x_3)  \ \cup \  V(x_1-x_3,\, \xi_1+\xi_3,\, x_2)  \, \cup \  V(x_2-x_3, \, \xi_2+\xi_3, \,x_1) \\
&\ \ \cup \  V(x_1-x_2,\, \xi_1+\xi_2,\, \xi_3)  \ \cup \  V(x_1-x_3,\, \xi_1+\xi_3,\, \xi_2)  \ \cup \  V(x_2-x_3, \, \xi_2+\xi_3, \,\xi_1) \\
&\  \ \cup \  V(x_1 - x_2,\, x_1 - x_3,\, \xi_1+\xi_2+\xi_3).
\end{align*}

If we compare this to our upper bound for the characteristic variety $\,\ch(W(I_3))\,$ from Proposition~\ref{proposition upper bound}, we see that the only difference between the components in~\eqref{eq subspace c} and~\eqref{eq subspace chat} is that instead of $\, V(x_1,x_2,x_3)\,$ and $V(x_1-x_2,\, x_2-x_3,\,\xi_1+\xi_2+\xi_3)$, we only have the component $\, B\coloneqq V(x_1-x_2,\, x_2-x_3)\subseteq T^*\A^3\,$ in the upper bound. 
However, the Weyl closure is holonomic by Lemma~\ref{lemma weyl closure holonomic} and thus the components of its characteristic variety are the conormals to their projections to~$\,\A^3\,$ by Theorem~\ref{theorem conormal}. Such a projection is a closed subvariety of the diagonal $V(x_1-x_2,\,x_2-x_3) \subseteq \A^3$, hence either equal to it or equal to a point. The corresponding conormal varieties are $\, V(x_1-x_2,\,x_2-x_3,\,\xi_1+\xi_2+\xi_3)\,$ and the cotangent spaces to the points $\,p_\lambda\coloneqq (\lambda,\lambda,\lambda)\,$ for some $\lambda \in \C$. It turns out that the components $\, V(x_1-x_2,\,x_2-x_3,\,\xi_1+\xi_2+\xi_3)\,$ and $\,V(x_1,x_2,x_3)\,$ of $\,\ch(W(I_3))\,$ are the only ones contained in $B$. In other words, the cotangent spaces to $\,p_\lambda\,$ are not contained in the characteristic variety unless $\lambda=0$. It does not seem to be very pleasant to verify this last claim by hand. The operator $\,P\,$ of lowest order we found in $\,D_3\,$ whose symbol $\,\init_{(0,e)}(P)\,$ does not vanish on $\,p_\lambda\,$ with $\,\lambda\neq 0\,$ has order $\,4\,$ and one needs coefficients of order $\,6\,$ to show that $P\in I_3$.

It is striking that the components of $\,\ch(W(I_3))\,$ contained in $\,B\,$ are exactly those conormal bundles contained in $\,B\,$ that are bihomogeneous in the $x_i$ and the~$\xi_j$. According to Conjecture \ref{conjecture charVar}, all components should have this property but for the time being we do not see how to deduce bihomogeneity in general, see also Problem~\ref{problem group action}.
\end{example}

\begin{example}\label{example m4}
Computations in {\tt Singular} for fixed $a,c$ over a finite field suggest that $\,\ch\left( I_4 \right)\,$ 
decomposes into $\, 51=B_5-1 \,$ irreducible components. One of them, $ K  \coloneqq  V(x_1-x_2,\,x_1-x_3,\,x_1-x_4)$, is $5$-dimensional. 
The analogous computations over $\, \Q(a,c)\,$ do not terminate. We can nevertheless verify its existence via the following trick. 
Instead of $I_4$, we consider the ideal $J_4 \coloneqq I_4+(x_1-x_2)$. Then we clearly have:
$$\ch(I_4) \,\supseteq \, \ch(I_4) \,\cap\, V(x_1-x_2) \, \supseteq \, \ch(J_4).$$
The computation of $\,\ch(J_4)\,$ is much simpler and immediately terminates. It turns out that $ K  \subseteq \ch(J_4)$. Therefore, $\,\ch(I_4)\,$ contains the $5$-dimensional component~$\,K\,$ and we conclude that $\,I_4\,$ is not holonomic.
\end{example}

\subsection{Open problems concerning the characteristic variety}\label{section open problems}
As the examples above indicated, there are a lot of open problems which we would like to put forward.

\begin{problem}
Compute the Weyl closure $\,W(I_m)\,$ of $\,  I_m \,$ for any $m$.
\end{problem}

\noindent A first step would be to explicitly write down differential operators in $W(I_m)\ohne I_m$.

\begin{problem}\label{problem group action}
Show that $\,\ch(W(I_m))\,$ (and possibly $\ch(I_m)$) are invariant under the action of $\,\mathbb{C}^{\ast}\times \mathbb{C}^{\ast}\,$ on $\,T^*\A^m=\A^m\times \A^m\,$ given by scalar multiplication on the factors.
\end{problem}

This would of course be an immediate consequence of a proof of Conjecture~\ref{conjecture charVar}. It should however be easier to tackle Problem \ref{problem group action} directly. One strategy could be to write down a flat one-parameter family of ideals $\{J_t\}_{t\in \A^1}$, such that $\,J_1=I_m\,$ and $\,J_0\,$ has an action by \mbox{$\,\mathbb{C}^{\ast} \times \mathbb{C}^{\ast}\,$} and then to see how to relate the characteristic varieties in a flat family. 

One way to realize such a one-parameter family concretely is to apply a suitable $\mathbb{C}^{\ast}$-action to $\,I_m\,$ and take the limit as the parameter $\,t\,$ of $\,\mathbb{C}^{\ast}\,$ goes to zero. If e.g.\ we decree the $\,x_i\,$ to have weight zero and the $\,\xi_i\,$ have weight one, the commutator relation of the Weyl algebra is preserved and for each $t$ we obtain an ideal $J_t$ as claimed. The flat limit is stable under the $\C^\ast$-action and can be found by applying the action to a Gröbner basis. Note that the action on $\,J_0\,$ induces an action of $\,{\mathbb{C}^{\ast} \times \mathbb{C}^{\ast}}\,$ on $\ch(J_0)$, as the latter always has a $\mathbb{C}^{\ast}$-action given by scalar multiplication on the fibers of $T^*\A^m \to \A^m$.

There are also other instances of annihilating ideals related by one-parameter families. It is classically known that the hypergeometric functions $\,{_0F_{\!\!\;1}}\,$ and ${}_1F_{\!\!\;1}$ are related to one another through a scaling and limit process. More precisely, ${_1F_{\!\!\;1}}(a;c)\big(\frac{1}{a}X\big) \to {_0F_{\!\!\;1}}(c)(X)$ as $ a\to \infty$, see~\cite[Section 7.5]{Mui82}. Also, the hypergeometric function $\,{_0F_{\!\!\;1}}\,$ is known to be annihilated by the operators
\begin{align}\label{ann0F1}
x_k\partial_k^2 \,+\,  c\partial_k \,+ \, \frac{1}{2}\left( \sum_{{\ell}\neq k} \frac{x_{\ell}}{x_k-x_{\ell}}(\partial_k - \partial_{\ell}) \right) \,-\, 1, 
\end{align}
where $ k =1,\ldots,m$. One directly checks that the $\,g_k\,$ from \eqref{ann1F1} scale accordingly to give the system \eqref{ann0F1}, see \cite[Theorem 7.5.6]{Mui82}.

\begin{problem}
Can the scaling relation between $\,{_0F_{\!\!\;1}}\,$ and $\,{}_1F_{\!\!\;1}\,$ be used to  deduce a relation between the characteristic varieties of $\, I_m\,$ and the corresponding ideal generated by the operators~\eqref{ann0F1}? 
\end{problem}

We would like to mention that ${_0F_{\!\!\;1}}$ naturally appears when investigating the normalizing constant of the Fisher distribution on $\text{SO(3)}$, as described in~\cite{SeiFisher}.

\subsection{Outlook}\label{section outlook}
We think that Conjecture~\ref{conjecture charVar} deserves further study and that it will be helpful to get a better understanding of the hypergeometric function $\,{_1F_{\!\!\;1}}\,$ of a matrix argument. The goal of the present article was to put forward this very clear and intriguing conjecture and to provide some evidence for it. The context in which we studied the function $\,{_1F_{\!\!\;1}}\,$ was rather conceptual, but our methods were mainly ad hoc. We believe that, eventually, the problem should be addressed using more advanced methods from $D$-module theory. For this, one should look for a more intrinsic description of the Muirhead ideal---or rather its Weyl closure. In particular, it would be interesting to understand if there is some generalization of GKZ systems and a relation to the hypergeometric function of a matrix argument similar to the one-variable case. We hope to be able to tackle these problems in the future.

\appendix

\section{Singular locus for special parameters} \label{sec:appendix}

In \cref{section solutions muirhead}, we discussed the singular locus of the Muirhead ideal and of its Weyl closure for those parameters $a,c$, for which the hypergeometric function $\,{_1F_{\!\!\;1}}(a;c)\,$ of a diagonal matrix argument is defined. In this appendix, we prove Theorem~\ref{thm:singularLocus} without any restriction on the parameter $c \in \C$. We are grateful to the referee for proposing an approach based on restriction modules, which finally led to the proof presented here. We would like to point out that similar problems have been studied in the literature. In \cite{HT14}, the singular locus of a holonomic system annihilating Lauricella's hypergeometric function $\,F_C\,$ was computed using a different technique. Hattori--Takayama used Gröbner bases and syzygies to compute a certain Ext-module whereas we analyze restriction modules on coordinate hyperplanes by a more elementary, computational argument. It would be interesting to compare the two methods more thoroughly.

Even though our approach in \cref{section solutions muirhead} for studying the singular locus rests only on the differential operators, defined regardless of the value of the parameters, the need to consider non-special parameters shows up in one subtle step of the computations: To prove that the coordinate hyperplanes $\, \{x \in \C^m \mid x_i = 0\}\,$ lie in the singular locus of $W(I_m)$, in \cref{lem:solAroundZeroComp} our proof relied on the condition \mbox{$c \notin \frac{m-1}{2} - \N$}. Note that this is the only step in the proof of \cref{thm:singularLocus} that does not work for arbitrary $c \in \C$. In particular, the diagonal hyperplanes were shown to lie in the singular locus for any $\,c\,$ by Lemma~\ref{lem:solAroundDiagComp}. Therefore, to prove \cref{thm:singularLocus}, it suffices (by symmetry) to show that the hyperplane $\, H  \coloneqq \{x \in \C^m \mid x_m = 0\}\,$ is contained in the singular locus of $W(I_m)$.

For this, we investigate the $D_{m-1}$-module $D_m/(W(I_m) + x_m D_m)$, which is the {\em restriction module} of $\,D_m/W(I_m)\,$ with respect to $H$. Its holonomic rank coincides with the dimension of the space of formal power series solutions to $\,W(I_m)\,$ centered at a general point of $H$.
Hence, recalling that $\,\hrk(W(I_m)) = 2^m\,$ by \cref{corollary hntt}, we can conclude \cref{thm:singularLocus} from the following result:

\begin{prop}
  Let $\,a,c \in \C\,$ be arbitrary parameters. Then the holonomic rank of the restriction module $\,D_m/(W(I_m) + x_m D_m)\,$ is strictly smaller than $2^m$.
\end{prop}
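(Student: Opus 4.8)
The plan is to bound the holonomic rank of the restriction module $D_m/(W(I_m) + x_m D_m)$ from above by analyzing formal power series solutions to $W(I_m)$ centered at a general point $p = (p_1,\ldots,p_{m-1},0)$ with $p_1,\ldots,p_{m-1}$ distinct nonzero complex numbers. By the standard comparison between restriction modules and local solution spaces, $\hrk\big(D_m/(W(I_m)+x_mD_m)\big)$ equals the dimension of $\Sol_{\C\llbracket x\rrbracket}(W(I_m))$ around such $p$, and since $I_m \subseteq W(I_m)$, this is at most $\dim \Sol_{\C\llbracket x\rrbracket}(I_m)$ around $p$. So it suffices to show this local solution dimension is strictly smaller than $2^m$ for \emph{all} $c \in \C$. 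We follow the same blueprint as \cref{lem:solAroundZeroComp}: substitute $x_i \mapsto x_i + p_i$ in the generators $P_1,\ldots,P_m$, pick the weight vector $u = (2,\ldots,2,3) \in \R^m$ (now singling out the last coordinate, since $p_m = 0$), and compute the initial forms $\init_{(-u,u)}\big(\restr{P_i}{x\mapsto x+p}\big)$.

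The key computation: for $i \leq m-1$, the term $x_i\partial_i^2$ contributes $p_i \prod_{j\neq i}(p_i - p_j) \cdot \frac{1}{x_i^2}\theta_i(\theta_i - 1)$ after substitution and taking the $(-u,u)$-initial form (here the factor $x_m$ inside $\prod_{j\neq i}(p_i - p_j)$ becomes $p_i - 0 = p_i \neq 0$, so the leading coefficient is a nonzero constant), exactly as before. The new feature is the operator attached to $P_m$: since $p_m = 0$, the factor $x_m$ multiplying $\partial_m^2$ does not get a constant term, so $\init_{(-u,u)}\big(\restr{P_m}{x\mapsto x+p}\big)$ will be a nonzero constant multiple of $\frac{1}{x_m}\theta_m\big(\theta_m + c - \tfrac{m+1}{2}\big)$, just as $P_1$ behaved in the proof of \cref{lem:solAroundZeroComp} — the point being that the highest-weight surviving terms of $\restr{P_m}{x\mapsto x+p}$ come from $x_m\partial_m^2$ and the $\partial_m$-term, with the $\prod_{\ell\neq m}(x_m - x_\ell)$ factor contributing $\prod_{\ell\neq m}(-p_\ell) \neq 0$. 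Applying \cref{lem:solutionInitials}, the formal solution space of $\restr{I_m}{x\mapsto x+p}$ is bounded by the solution space of these initial operators, which is spanned by monomials $x^\alpha$ with $\alpha_i \in \{0,1\}$ for $i \leq m-1$ and $\alpha_m$ constrained by $\alpha_m(\alpha_m + c - \tfrac{m+1}{2}) = 0$. The latter equation has solutions $\alpha_m = 0$ always, and $\alpha_m = \tfrac{m+1}{2} - c$ only when this is a nonnegative integer.

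This is where the argument must diverge from the analytic case. If $\tfrac{m+1}{2} - c \notin \N$, then $\alpha_m = 0$ is forced and we get exactly $2^{m-1} < 2^m$ monomials, done. The obstacle is the exceptional case $\tfrac{m+1}{2} - c = N$ for some $N \in \N_{\geq 1}$: then the operator $\theta_m(\theta_m - N)$ has a two-dimensional monomial solution space in the $x_m$-direction, giving a naive bound of $2 \cdot 2^{m-1} = 2^m$, which is not strictly smaller. To handle this, the plan is to extract a \emph{further} operator from $W(I_m)$ (or from $I_m$ via an $S$-pair, as in \cref{proposition upper bound}) whose initial form with respect to $(-u,u)$ refines the $x_m$-behavior — concretely, one wants an operator behaving like $\theta_m + (\text{lower-weight in }x_m)$ or like $Q_m$ from \cref{prop:uniqueSolAroundOrigin} restricted near $H$, which would kill the spurious monomial $x_m^N$. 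The natural candidate is $\init_{(-u,u)}$ of a combination that mixes $P_m$ with $x_m \cdot (\text{something})$, or alternatively to use that $W(I_m)$ is holonomic (\cref{corollary hntt}) together with \cref{theorem conormal}: the conormal components of $\ch(W(I_m))$ lying over $H$ must be conormals to subvarieties of $H$, and a dimension/multiplicity count along $H$ forces the restriction rank down. I expect this exceptional-parameter case — producing the extra operator explicitly, or running the holonomic-restriction bookkeeping — to be the main obstacle; the generic-parameter case is a routine variant of \cref{lem:solAroundZeroComp}.
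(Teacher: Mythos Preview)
Your treatment of the generic-parameter case is correct and is precisely the argument of \cref{lem:solAroundZeroComp} (with the roles of the first and last coordinate swapped). The paper itself already contains this, and the appendix exists exactly because that argument breaks down when $c=\frac{m-1}{2}-s$ for some $s\in\N$.

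The genuine gap is in the exceptional case. You correctly observe that for $c=\frac{m+1}{2}-N$ with $N\in\N_{\geq 1}$ the initial operator $\theta_m(\theta_m-N)$ has a two-dimensional monomial solution space, so \cref{lem:solutionInitials} only yields the useless bound $2^m$. But you do not actually produce the extra relation that would lower this; you acknowledge this is ``the main obstacle'' and leave it open. None of the three remedies you sketch works in any direct way: the $S$-pairs of \cref{proposition upper bound} control $\init_{(0,e)}$, not $\init_{(-u,u)}$ at a shifted point; the conormal description from \cref{theorem conormal} constrains the \emph{support} of the characteristic variety, not the multiplicities needed for a rank bound; and the operators $Q_i$ of \cref{prop:uniqueSolAroundOrigin} are tailored to the origin, not to a general point of~$H$.

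The paper's resolution is substantially different from your outline. It abandons the initial-form approach at a shifted point and works directly in the restriction module $D_{(x_m)}/(J_m+x_mD_{(x_m)})$ with a POT term order. The expressions $\restr{(\partial_m^k g_m)}{x_m=0}$ for varying $k$ give a recursion among the $\partial_m$-powers; for the special value $c=\frac{m-1}{2}-s$ the coefficient of $\partial_m^{s+1}$ in the $k=s$ equation vanishes, and one must show that the remaining terms produce a nontrivial linear dependence among the $2^m$ standard monomials $\partial^\tau$, $\partial^\tau\partial_m^{s+1}$. This is the content of \cref{lem 8.3}: a recursive construction of operators $H_r$ together with a $2$-adic valuation argument to guarantee that a specific coefficient $\bar q_{e_1}^{(r)}$ never vanishes. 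That last step is the heart of the matter and is not visible from your sketch.
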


\begin{proof}
  Consider the localized Weyl algebra 
  \[D_{(x_m)} \, \coloneqq \, \C[x_1,\dots,x_m]_{(x_m)} \otimes_{\C[x_1,\dots,x_m]} D_m,\]
  which is the ring of differential operators with rational function coefficients that do not have poles along the hyperplane $\{x \in \C^m \mid x_m = 0\}$. Denote by $\,J_m\,$ the ideal $W(I_m) \cap D_{(x_m)} \subseteq D_{(x_m)}$. Then the inclusion $D_m \subseteq D_{(x_m)}$ induces an isomorphism of $R_{m-1}$-modules
  \[R_{m-1} \otimes_{D_{m-1}} D_m/(W(I_m)+x_mD_m) \, \cong \, D_{(x_m)}/(J_m + x_m D_{(x_m)}) \, \eqqcolon \, M.\]
  By definition, the holonomic rank of the restriction module $\,D_m/(W(I_m) + x_m D_m)\,$ is the dimension of the $R_{m-1}$-module $\,M\,$ as a vector space over $\C(x_1,\dots,x_{m-1})$. Therefore, our aim is to bound $\dim_{\C(x_1,\dots,x_{m-1})} M$.
  Note that $$D_{(x_m)}/x_m D_{(x_m)} \,\cong \, \C[x_1,\dots,x_m]_{(x_m)}/(x_m) \otimes_{\C[x_1,\dots,x_m]} D_m$$ is a free $R_{m-1}$-module isomorphic to $\, R_{m-1}^{\oplus \infty} \,$ with the countable basis $\{1,\partial_m, \partial_m^2,\dots\}$. For each operator $Q \in D_{(x_m)}$, we write $ \, \restr{Q}{x_m=0} \, $ for the unique expression $\, \sum_{i=0}^k Q_i \partial_m^{i}\,$ with $\,Q_i \in R_{m-1}\,$ representing it in $D_{(x_m)}/x_m D_{(x_m)}$. Then $\,M\,$ is the quotient of $\,D_{(x_m)}/x_m D_{(x_m)}\,$ by the $R_{m-1}$-submodule 
  \[N \, \coloneqq \, \{\restr{Q}{x_m=0} \mid Q \in J_m\}.\]
  We equip $\,D_{(x_m)}/x_m D_{(x_m)}\,$ with a total order $\,\prec\,$ on its $\C(x_1,\dots,x_{m-1})$-basis of monomials $\,\{\partial^\alpha = \partial_1^{\alpha_1} \dots \partial_{m-1}^{\alpha_{m-1}} \partial_m^{\alpha_m} \mid \alpha \in \N^m\}\,$ as follows: For $\alpha, \beta \in \N^m$,
  \[\partial^\alpha \prec \partial^\beta \ \  :\Leftrightarrow \ \  \alpha_m < \beta_m \text{ or } \big(\alpha_m = \beta_m \text{ and } (\alpha_1,\dots,\alpha_{m-1}) \prec_{\rm grlex} (\beta_1,\dots,\beta_{m-1})\big),\]
  where $\,\prec_{\rm grlex}\,$ denotes the graded lexicographic order on $\N^{m-1}$. This is a POT term order (“position over term”) on the free $R_{m-1}$-module $D_{(x_m)}/x_m D_{(x_m)} \cong R_{m-1}^{\oplus\infty}$, cf.~\cite[§5.2]{SST00}. The dimension of $\,M\,$ over $\,\C(x_1,\dots,x_{m-1})\,$ agrees with that of the associated graded module
  \[\gr^{\prec}(M) \, = \,  \C(x_1,\dots,x_{m-1})[\xi_1,\dots,\xi_{m-1}]^{\oplus \infty}/\init_\prec(N),\] where $\, \init_\prec(N)\, $ is the $\C(x_1,\dots,x_{m-1})[\xi_1,\dots,\xi_{m-1}]$-submodule generated by the initial forms of elements in $\,N\,$ with respect to $\prec$.
  
  Our approach is now to explicitly write out ($\prec$-initial forms of) elements in $N$ to bound the holonomic rank of the restriction module. 
  Note that the Muirhead operators $\,g_1,\dots, g_m\,$ from \eqref{ann1F1re} lie in $J_m$, hence $\,\restr{\partial_m^k g_i}{x_m = 0} \in N\,$ for all $\,i = 1,\dots, m\,$ and $k \in \N$. If $i \neq m$, one computes that
  \begin{equation} \label{eq:genOps}
    \restr{\partial_m^k g_i}{x_m = 0} \,=\, x_i \partial_i^2 \partial_m^k \,+\, \text{smaller  order terms w.r.t.} \prec.
  \end{equation}
  Hence, $x_i \xi_i^2 \, \partial_m^k \in \init_{\prec}(N)$ for all $i \leq m-1$, $k \in \N$.
  
  Moreover, a straightforward computation reveals that $\,  \restr{(\partial_m^k g_m)}{x_m = 0}  \, $ equals
  \begin{equation} \label{eq:spOps}
    \big(c+k - \frac{m-1}{2}\big) \partial_m^{k+1} \,-\, (k+a) \partial_m^k \,+\, \frac{1}{2} \sum_{\ell=0}^k \frac{k!}{\ell!} \sum_{j=1}^{m-1} x_j^{\ell-k} (\partial_j - \ell x_j^{-1}) \; \partial_m^\ell.
  \end{equation}
  In particular, we see that $\,\partial_m^{k+1} \in \init_{\prec}(N)\,$ for all \,$k \in \N\,$ with $c+k-\frac{m-1}{2} \neq 0$.
  
  For the case that $c \notin \frac{m-1}{2} - \N$, we conclude that $\,\gr^{\prec}(M)\,$ is generated over $\,\C(x_1,\dots,x_{m-1})\,$ by the $\,2^{m-1}\,$ elements $\,\xi_1^{\alpha_1} \cdots \xi_{m-1}^{\alpha_{m-1}}\,$ with $\alpha \in \{0,1\}^{m-1}$. This shows
  \begin{align*}
    \hrk(D_m/(W(I_m) + x_m D_m)) &\,=\, \dim_{\C(x_1,\dots,x_{m-1})}(M) \\ 
    &\,=\, \dim_{\C(x_1,\dots,x_{m-1})}\big(\gr^{\prec}(M)\big) \,\leq \, 2^{m-1} \,<\, 2^m,
  \end{align*}
  reproving \cref{lem:solAroundZeroComp}.
  
  Now, we turn to the remaining case $\,c = \frac{m-1}{2} - s\,$ for some $s \in \N$. In this case, \eqref{eq:genOps} for $\,k \in \{0, s\}\,$ and \eqref{eq:spOps} for $\,k \in \N \setminus \{s\}\,$ show that $\,\gr^{\prec}(M)\,$ is generated over $\,\C(x_1,\dots,x_{m-1})\,$ by the $\,2^m\,$ elements $\,\xi_1^{\alpha_1} \cdots \xi_{m-1}^{\alpha_{m-1}} \, \partial_m^r\,$ with $\,\alpha \in \{0,1\}^{m-1}\,$ and $r \in \{0,s+1\}$. It suffices to prove that there is a linear dependence among these generators. Then we can conclude that 
  \begin{align*}
    \hrk \big(D_m/(W(I_m) + x_m D_m)\big) \,=\, \dim_{\C(x_1,\dots,x_{m-1})}\big(\gr^{\prec}(M)\big) \,\leq \,2^m-1 \,<\, 2^m.
  \end{align*}
  In the following lemma, we leverage \eqref{eq:spOps} for $\,k = s\,$ to show the linear dependence, concluding the proof.
\end{proof}

\begin{lemma}\label{lem 8.3}
  Let $a \in \C$, $s \in \N$ and $c = \frac{m-1}{2} - s$. For each $r \in \{1,\dots,s+1\}$, there exists an element in $\,N\,$ of the form
  \begin{equation} \label{eq:goodOp}
    H_r \,=\,s(s-1)\cdots (s-r+1) \partial_m^r \,-\sum_{\tau \,\in\, \{0,1\}^{m-1}} q_\tau^\supind{r} \partial_1^{\tau_1} \cdots \partial_{m-1}^{\tau_{m-1}},
  \end{equation}
  where $\,q_\tau^\supind{r} \in \C(x_1,\dots,x_{m-1})\,$ and for each $\,r\,$ there is at least one $\,\tau\,$ with $q_\tau^\supind{r} \neq 0$.
  In particular (setting $r = s+1$), the elements $\,\big\{\partial_1^{\tau_1} \dots \partial_{m-1}^{\tau_{m-1}} \mid \tau \in \{0,1\}^{m-1}\big\}\,$ of $\,M\,$ are not linearly independent over $\C(x_1,\dots,x_{m-1})$.
\end{lemma}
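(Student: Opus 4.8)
The plan is to produce $H_1,\dots,H_{s+1}$ one after another, using the elements $\restr{(\partial_m^k g_m)}{x_m=0}\in N$ of \eqref{eq:spOps} as the source for powers of $\partial_m$, and the elements $\restr{g_j}{x_m=0}=x_j\partial_j^2+\text{l.o.t.}\in N$ for $j\leq m-1$ of \eqref{eq:genOps} to push the orders in $\partial_1,\dots,\partial_{m-1}$ down to at most $1$. Recall that $N$ is a left $R_{m-1}$-submodule of $D_{(x_m)}/x_mD_{(x_m)}$, so it is stable under left multiplication by elements of $R_{m-1}$.

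First, for $r=1$ I take $H_1:=-\restr{g_m}{x_m=0}$. Specializing \eqref{eq:spOps} at $k=0$ and using $c=\tfrac{m-1}{2}-s$ gives $H_1=s\,\partial_m-\big(\tfrac12\sum_{j=1}^{m-1}\partial_j-a\big)$, which has the form \eqref{eq:goodOp} with $q_{e_j}^{\supind{1}}=\tfrac12$; as $m\geq 2$, not all $q_\tau^{\supind{1}}$ vanish.

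For the inductive step, assume $H_1,\dots,H_r$ are constructed, $1\leq r\leq s$. By \eqref{eq:spOps} at $k=r$, the element $\restr{(\partial_m^r g_m)}{x_m=0}\in N$ has $\partial_m^{r+1}$-coefficient $c+r-\tfrac{m-1}{2}=r-s$; since $r\leq s$ the integer $s(s-1)\cdots(s-r+1)$ is nonzero, so $-s(s-1)\cdots(s-r+1)\cdot\restr{(\partial_m^r g_m)}{x_m=0}\in N$ has $\partial_m^{r+1}$-coefficient exactly $s(s-1)\cdots(s-r)$. Now, for $\ell=r,r-1,\dots,1$ in turn, subtract the left $R_{m-1}$-multiple $\tfrac{1}{s(s-1)\cdots(s-\ell+1)}A_\ell H_\ell$, where $A_\ell\in R_{m-1}$ is the coefficient of $\partial_m^\ell$ of the element reached so far. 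Since the tail $\sum_\tau q_\tau^{\supind{\ell}}\partial_1^{\tau_1}\cdots\partial_{m-1}^{\tau_{m-1}}$ of $H_\ell$ is free of $\partial_m$, each subtraction cancels the $\partial_m^\ell$-term and alters only the part of $\partial_m$-degree $0$; in particular $A_\ell$ is the coefficient carried over unchanged from the rescaled seed, hence lies in $R_{m-1}$. One arrives at an element $s(s-1)\cdots(s-r)\,\partial_m^{r+1}+B$ of $N$ with $B\in R_{m-1}$. Finally I reduce $B$ modulo the left ideal of $R_{m-1}$ generated by $\restr{g_1}{x_m=0},\dots,\restr{g_{m-1}}{x_m=0}$: their leading monomials are $\partial_j^2$ (up to the unit $x_j$), which are pairwise coprime, so the reduction terminates --- each step strictly lowers the total order of the monomials still divisible by some $\partial_j^2$ --- and returns a normal form $-\sum_{\tau\in\{0,1\}^{m-1}}q_\tau^{\supind{r+1}}\partial_1^{\tau_1}\cdots\partial_{m-1}^{\tau_{m-1}}$ with all $\partial_j$-orders at most $1$ and $q_\tau^{\supind{r+1}}\in\C(x_1,\dots,x_{m-1})$. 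Setting $H_{r+1}:=s(s-1)\cdots(s-r)\,\partial_m^{r+1}-\sum_\tau q_\tau^{\supind{r+1}}\partial_1^{\tau_1}\cdots\partial_{m-1}^{\tau_{m-1}}\in N$ gives the required shape; for $r+1=s+1$ the leading coefficient vanishes, so $H_{s+1}$ is free of $\partial_m$.

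The hard part will be the nonvanishing assertion: that at each stage not all $q_\tau^{\supind{r}}$ are zero. I would establish it by following a single coefficient through the construction. In $H_{r+1}$, consider the coefficient of $\partial_1$ and, within this rational function, the coefficient of its leading pole $x_1^{-r}$ along $\{x_1=0\}$. The only pole of order $r$ that enters the construction is the one in the $\partial_m$-free summand $\tfrac{r!}{2}\sum_j x_j^{-r}\partial_j$ of the seed \eqref{eq:spOps} at $k=r$; all other contributions --- the lower-order parts of the rescaled seed, the finitely many elimination terms $A_\ell H_\ell$, and the corrections produced when the $\partial_j^2$-terms are reduced against $\restr{g_j}{x_m=0}$ --- can be bookkept explicitly, and the resulting coefficient is a rational expression in $a$ and $s$ that does not vanish for $s\in\N$; already for $r=1$ it equals $\tfrac18-\tfrac s4=\tfrac{1-2s}{8}$, the vanishing being obstructed precisely by $s$ being a nonnegative integer. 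This bookkeeping is the main obstacle: a priori the elimination and reduction steps might conspire to cancel the leading pole, so one really has to pin down its coefficient, whereas the construction of the $H_r$ themselves is routine. Granting the nonvanishing, the final claim is immediate --- $H_{s+1}=-\sum_\tau q_\tau^{\supind{s+1}}\partial_1^{\tau_1}\cdots\partial_{m-1}^{\tau_{m-1}}\in N$ with some $q_\tau^{\supind{s+1}}\neq 0$ is exactly a nontrivial $\C(x_1,\dots,x_{m-1})$-linear dependence among the $\partial_1^{\tau_1}\cdots\partial_{m-1}^{\tau_{m-1}}$, $\tau\in\{0,1\}^{m-1}$, in $M$.
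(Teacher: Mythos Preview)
Your construction of the $H_r$ is exactly the paper's: start from the seed $\restr{(\partial_m^k g_m)}{x_m=0}$, clear the lower powers of $\partial_m$ against the previously built $H_\ell$, then reduce the $\partial_m$-free remainder modulo the Muirhead operators $\restr{g_i}{x_m=0}$ for $i<m$. So the ``routine'' part is fine and matches the paper.

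The genuine gap is the nonvanishing step, and you flag it yourself. Asserting that the leading-pole coefficient ``is a rational expression in $a$ and $s$ that does not vanish for $s\in\N$'' is not a proof: a priori such an expression could depend on $a$ and vanish for particular parameters, or be a polynomial in $s$ with integer roots. You also have not justified your claim that the only source of the maximal pole $x_1^{-r}$ is the seed; the elimination steps multiply operators $A_\ell\in R_{m-1}$ (which themselves carry poles in $x_1$) against the tails of $H_\ell$ (which also carry poles), so high-order poles can and do arise from these products. Controlling this is exactly the content of the paper's property~(i), namely $x_1^{\,r-|\tau|}q_\tau^{(r)}\in\C[x_1,\dots,x_{m-1}]_{(x_1)}$, which has to be proved inductively in tandem with the nonvanishing.

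The paper's device for the nonvanishing is a $2$-adic argument: writing $\bar q_\tau^{(r)}:=(x_1^{\,r-|\tau|}q_\tau^{(r)})|_{x_1=0}$, one shows simultaneously that $\bar q_{(0,\dots,0)}^{(r)}=0$ and that $\bar q_{e_1}^{(r)}\in\tfrac{1}{2^{2r-1}}\Z\setminus\tfrac{1}{2^{2r-2}}\Z$. The point is that in the recursion for $\bar q_{e_1}^{(k+1)}$ the unique term of smallest $2$-adic valuation is $-\tfrac14\,\bar q_{e_1}^{(k)}$; all other summands are $2$-adically larger (here one uses that $\bar q_{(0,\dots,0)}^{(r)}=0$, which also shows the quantity is independent of $a$). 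This forces $\bar q_{e_1}^{(r)}\neq 0$ for every $r$, which is precisely the missing nonvanishing. Without an argument of this kind your proof is incomplete.
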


\begin{proof}
  We fix $m$ and $s$, and for $\tau \in \{0,1\}^{m-1}$, we denote $|\tau| := \tau_1+\dots+\tau_{m-1}$. By induction on $r$, we prove the following:
  \begin{enumerate}[label=(\roman*)]
    \item $x_1^{r-|\tau|} q_\tau^\supind{r} \in \C[x_1,\dots,x_{m-1}]_{(x_1)}$ for all $\tau \in \{0,1\}^{m-1}$.
    \item $\restr{\bigg(x_1^r q_{(0,0,\dots,0)}^\supind{r}\bigg)}{x_1 = 0} = 0$.
    \item $\restr{\bigg(x_1^{r-1} q_{(1,0,\dots,0)}^\supind{r}\bigg)}{x_1 = 0} \in \frac{1}{2^{2r-1}} \Z \setminus \frac{1}{2^{2r-2}} \Z$.
  \end{enumerate}
  Note that the expressions in (ii) and (iii) are well-defined because of (i). In particular, condition (iii) guarantees that $q_{(1,0,\dots,0)}^\supind{r} \neq 0$.
  
  For $r = 1$, let $\,H_1\,$ be the negative of \eqref{eq:spOps} with $k = 0$, which is of the desired form with 
  \[q_\tau^\supind{1} \,=\, \begin{cases} -a &\text{if } |\tau| = 0, \\ 1/2 &\text{if } |\tau| = 1, \\ 0 &\text{if } |\tau| > 2.\end{cases}\]
  For the induction step, fix $\,k \geq 1\,$ and assume that the operators $\,H_r\,$ have been constructed with properties (i) to (iii) for $\,r \leq k\,$. We wish to construct $H_{k+1}$. For this, we start with a suitable multiple of the expression \eqref{eq:spOps} and reduce it with respect to $\,\prec\,$ modulo the expressions $\,H_r\,$ for $\,r \leq k\,$ and the expressions $\,\restr{g_i}{x_m=0}\,$ for $\,i < m\,$ from \eqref{eq:genOps}. To verify the desired properties, we do not get around carrying out the calculations. Explicitly, the following element arises after the reductions modulo only the expressions $\,H_r\,$ for $r \leq k$:
  \begin{align*}
    \widetilde{H}_{k+1} \, \coloneqq \,{} &-\prod_{i=0}^{k-1} (s-i) \cdot  \restr{\big(\partial_m^k g_m\big)}{x_m = 0} - (k+a)H_k \\
    &+ \frac{1}{2} \sum_{r=1}^k \frac{k!}{r!} \prod_{i=r}^{k-1} (s-i) \sum_{j=1}^{m-1} x_j^{r-k} (\partial_j - r x_j^{-1}) H_r.
  \end{align*}
  Note that in $\widetilde{H}_{k+1}$, all terms involving $\,\partial_m^r\,$ for $\,r \notin \{0,k+1\}\,$ cancel, the term $\,\partial_m^{k+1}\,$ only occurs with coefficient $s\cdots (s-r+1)$, and all other terms are of the form $\,p \partial_1^{\alpha_1} \dots \partial_{m-1}^{\alpha_{m-1}}\,$ with $\,p \in \C(x_1,\dots, x_{m-1})\,$ and $\alpha \in \{0,1,2\}^{m-1}$. We define $\,H_{k+1}\,$ as the expression obtained by further reducing $\,\widetilde{H}_{k+1}\,$ modulo the expressions $\,\restr{g_i}{x_m=0}\,$ for $i \leq m-1$. For this, note that $\,\restr{g_i}{x_m=0}\,$ for $\,i < m\,$ are  the Muirhead operators \eqref{ann1F1} in dimension $m-1$, and are in particular a Gröbner basis for $\,\prec\,$ by \cref{theorem holonomic rank im}.
  
  Denote by $\,\nu \colon \C(x_1,\dots,x_{m-1}) \to \Z \cup\{\infty\}\,$ the discrete valuation with valuation ring $\C[x_1,\dots,x_{m-1}]_{(x_1)}$, i.e.,
  \[\nu(p) \, \coloneqq \, \sup\left\{i \in \Z \mid x_1^{-i} p \in \C(x_1,\dots,x_{m-1})_{(x_1)}\right\}.\]
  With this notation, property (i) can be reformulated as $\nu(q_\tau^\supind{r}) \geq |\tau|-r$.
  
  In $R_{m-1}$, a reduction with respect to $\,\prec\,$ modulo the Muirhead operator $\,\restr{g_i}{x_m=0}\,$ for $\,i \leq m\,$ replaces
  \begin{equation} \label{eq:reductionModMuirhead}
    \partial_i^2 \, \mapsto \, x_i^{-1} \bigg(\Big(s-\frac{1}{2}+x_i-\frac{1}{2} \sum_{j \neq i}^{m-1} \frac{x_i}{x_i-x_j}\Big) \partial_i +\frac{1}{2} \sum_{j \neq i}^{m-1} \frac{x_j}{x_i-x_j} \partial_j + a\bigg).
  \end{equation}
  Applying this reduction to $\,p \partial^\alpha\,$ with $p \in \C(x_1,\dots,x_{m-1})$, $\alpha \in \{0,1,2\}^{m-1}$ yields only terms $\,p' \partial^{\alpha'}\,$ with $|\alpha'|-\nu(p') \leq |\alpha|-\nu(p)$, and equality can only hold for $i = 1$.
  Therefore, to prove property (i) for $H_{k+1}$, it suffices to show that 
  $\,\widetilde{H}_{k+1}\,$ has only terms $\,p \partial^\alpha\,$ with $|\alpha|-\nu(p) \leq k+1$. This can easily be seen by substituting \eqref{eq:spOps} and \eqref{eq:goodOp} (for $r \leq k$) into the definition of $\widetilde{H}$, and using that property (i) holds for $\,r \leq k\,$ by the induction hypothesis.
  
  We now turn to verifying properties (ii) and (iii). For this, we denote 
  \[\bar{q}_\tau^\supind{r} \, \coloneqq \, \restr{\bigg(x_1^{r-|\tau|} q_\tau^\supind{r}\bigg)}{x_1 = 0} \in \C(x_2,\dots,x_{m-1})\]
  for all $r \in \{1,\dots,k+1\}$. To determine $\bar{q}_\tau^\supind{k+1}$, we restrict our attention to those terms $\,p \partial^\alpha\,$ in $\,H_{k+1}\,$ for which the $\,|\alpha|-\nu(p)\,$ attains the maximum, namely $k+1$. As we have seen above in \eqref{eq:reductionModMuirhead}, for this purpose, the terms of $\,\widetilde H_{k+1}\,$ that get reduced modulo $\,\restr{g_i}{x_m=0}\,$ for $\,2 \leq i \leq m-1\,$ can be ignored, and it suffices to carry out the reductions of $\,\widetilde{H}_{k+1}\,$ modulo the single Muirhead operator $\restr{g_1}{x_m=0}$, which results in
  \[
  \widetilde{H}_{k+1}  + \frac{1}{2} \sum_{r=1}^k \frac{k!}{r!} \prod_{i=r}^{k-1} (s-i) x_1^{r-k} \sum_{\tau : \tau_1 = 1} q_\tau^\supind{r} x_1^{-1} \partial^{(0,\tau_2,\dots,\tau_{m-1})} \Big(\restr{g_1}{x_m=0}\Big). 
  \]
  Expanding this expression and dismissing all terms $\,p \partial^\alpha\,$ with $\,|\alpha|-\nu(p) < k+1\,$ or with $\alpha_i > 1$, one reads off for all $\,\tau \in \{0,1\}^{m-1}\,$ the recursion
  \begingroup \small
  \begin{align*}
    \bar{q}_\tau^\supind{k+1} \,=\,
    \begin{cases}
      \scriptsize
      \begin{aligned}
        &\frac{1}{2} \sum_{r=1}^k \frac{k!}{r!} \prod_{i=r}^{k-1} (s-i)
        \bigg(\Big(|\tau|-2r+s-\frac{1}{2}\Big) \bar{q}_\tau^\supind{r} + \bar{q}_{\tau-e_1}^\supind{r}\bigg) {}+{} \prod_{j=2}^{m-1} (\tau_j-1) \cdot \frac{1}{2} k! \prod_{i=0}^{k-1} (s-i)\\ 
      \end{aligned}
      &\text{ if } \tau_1 = 1, \\[1em]
      \scriptsize
      \begin{aligned}
        \frac{1}{2} \sum_{r=1}^k \frac{k!}{r!} \prod_{i=r}^{k-1} (s-i)
        \bigg(\Big(|\tau|-2r+s-1\Big) \bar{q}_\tau^\supind{r} + \frac{1}{2} \sum_{j : \tau_j = 1}\bar{q}_{\tau-e_j+e_1}^\supind{r}\bigg)
      \end{aligned}
      &\text{ if } \tau_1 = 0,
    \end{cases}
  \end{align*}
  \endgroup
  where $\,e_j \coloneqq (0,\dots,0,1,0,\dots,0) \in \N^{m-1}\,$ with entry $\,1\,$ at the $j$-th position. 
	
  In particular, we immediately see that $\bar{q}_{(0,0\dots,0)}^\supind{k+1} = 0$, as $\,\bar{q}_{(0,0\dots,0)}^\supind{r} = 0\,$ for $\,r \leq k\,$ by the induction hypothesis, proving property (ii). Now, considering the above formula for $\tau = e_1$, we get
  \[\bar{q}_{e_1}^\supind{k+1} \,=\, \frac{1}{2}\Big(1-2k+s-\frac{1}{2}\Big)\bar{q}_{e_1}^\supind{k} \,+\, \bigg[\text{summands for $r < k$}\bigg] \,+\, \frac{1}{2}k! \prod_{i=0}^{k-1} (s-i).\]
  From the induction hypothesis, we see that the unique term of smallest $2$-adic valuation in this expression is $\frac{1}{4} \bar{q}_{e_1}^\supind{k} \in \frac{1}{2^{2k+1}} \Z \setminus \frac{1}{2^{2k}} \Z$. This shows property (iii) and concludes the proof.
\end{proof}

\bibliography{literatur}
\bibliographystyle{abbrv}

\bigskip
\bigskip
\end{document}